\newtheorem{theorem}{Theorem}[section]
\newtheorem{lemma}[theorem]{Lemma}
\newtheorem{proposition}[theorem]{Proposition}
\newtheorem{corollary}[theorem]{Corollary}
\theoremstyle{definition}
\newtheorem{definition}[theorem]{Definition}
\newtheorem{remark}[theorem]{Remark}
\numberwithin{equation}{section}
\numberwithin{theorem}{section}
\newcommand{\N}{\mathbb{N}}
\newcommand{\R}{\mathbb{R}}
\newcommand{\Z}{\mathbb{Z}}
\newcommand{\Q}{\mathbb{Q}}
\newcommand{\C}{\mathbb{C}}
\newcommand{\Qbar}{\overline{\Q}}
\newcommand{\cA}{\mathcal{A}}
\newcommand{\cB}{\mathcal{B}}
\newcommand{\cC}{\mathcal{C}}
\newcommand{\End}{\text{End}}
\newcommand{\D}{\mathcal{D}}
\newcommand{\bo}[1]{\boldsymbol{#1}}
\newcommand{\mc}[1]{\mathcal{#1}}
\newcommand{\bz}{\boldsymbol{z}}
\newcommand{\per}{\boldsymbol{\omega}}
\newcommand{\bw}{\boldsymbol{w}}
\newcommand{\Kbar}{\overline{K}}
\def\lg{\left\lbrace}
\def\rg{\right\rbrace}
\author[F. Barroero]{Fabrizio Barroero}
\address{Universit\`a di Roma Tre, Dipartimento di Matematica e Fisica, Largo San Murialdo 1, 00146 Roma, Italy}
\email{fbarroero@gmail.com}
\author[L. Capuano]{Laura Capuano}
\address{Mathematical Institute,
University of Oxford,
Woodstock Road
Oxford
OX2 6GG, UK}
\email{laura.capuano1987@gmail.com}
\title[Unlikely Intersections and the polynomial Pell equation]{Unlikely intersections in families of abelian varieties and the Polynomial Pell equation}
\subjclass[2010]{11G05 - 11G50 - 11U09 - 14K05 }   
\date{\today}
\keywords{unlikely intersections, abelian varieties, Pell'e equation, o-minimality.}
\begin{document}

\begin{abstract}
Let $S$ be a smooth irreducible curve defined over a number field $k$ and consider an abelian scheme $\cA$ over $S$ and a curve $\cC$ inside $\cA$, both defined over $k$. In previous works, we proved that, when $\cA$ is a fibered product of elliptic schemes, if $\cC$ is not contained in a proper subgroup scheme of $\cA$, then it contains at most finitely many points that belong to a flat subgroup scheme of codimension at least 2. In this article, we continue our investigation and settle the crucial case of powers of simple abelian schemes of relative dimension $g\ge 2$. This, combined with the above mentioned result and work by Habegger and Pila, gives the statement for general abelian schemes which has applications in the study of solvability of almost-Pell equations in polynomials. 
\end{abstract}

\maketitle
\section{Introduction}

Let $E_\lambda$ denote the elliptic curve in the Legendre form defined by
$Y^2=X(X-1)(X-\lambda).$
In \cite{MasserZannier10} (see also \cite{MasserZannier08}), Masser and Zannier showed that there are at most finitely many complex numbers $\lambda_0 \neq 0,1$ such that the two points of coordinates $(2,\sqrt{2(2-\lambda_0)})$ and $ (3,\sqrt{6(3-\lambda_0)})$ both have finite order on the elliptic curve $E_{\lambda_0}$. Later, in \cite{MZ12} the same authors proved that one can replace $2$ and $3$ with any two distinct complex numbers ($\neq 0,1$) or even choose distinct $X$-coordinates ($\neq \lambda$) defined over an algebraic closure of $\C(\lambda)$, provided the points are not identically linearly related.
  
Results of this kind are sometimes called ``relative Manin-Mumford'' because of the clear analogy with the classical Manin-Mumford conjecture. Masser and Zannier \cite{MZpreprint} recently settled the problem for a curve in any abelian scheme, both defined over the algebraic numbers.

As a generalization of the result in \cite{MasserZannier10}, Masser (see \cite{Zannier}, p.~88) asked if there are infinitely many $\lambda_0 \in \C $ such that two independent relations with coefficients in $\Z$ between the points $(2,\sqrt{2(2-\lambda_0)})$, $(3,\sqrt{6(3-\lambda_0)})$ and $(5,\sqrt{20(5-\lambda_0)})$ hold on the same elliptic curve $E_{\lambda_0}$. 
Masser and Zannier expected a negative answer in view of very general conjectures and, indeed, this follows from the main result of \cite{BC2016}, where we proved a more general theorem about the intersection of a curve with codimension $2$ flat subgroup schemes in the $n$-fold fibered power of $E_{\lambda}$.  

The proof of this result follows the now well-established Pila-Zannier strategy, first introduced in \cite{PilaZannier} to give an alternative proof of Raynaud's Theorem \cite{Rayn} (Manin-Mumford for abelian varieties) and already used by Masser and Zannier in the works mentioned above. One of the main ingredients of their proofs is a theorem of Pila \cite{Pila04} (which is itself a generalization of \cite{BombieriPila} and was then extended in \cite{PilaWilkie} to a very general setting) to count rational points of bounded denominator on a  ``sufficiently transcendental'' real subanalytic surface.
In \cite{BC2016} instead we adapted ideas introduced in \cite{CMPZ}, where the authors counted rational points on suitable definable subsets of a Grassmannian variety to obtain an unlikely intersections result for curves in algebraic tori (see also \cite{BMZ99}). This allowed us to deal with linear relations rather than just with the torsion points.  

In this article, we push the method further and prove a statement analogous to the main theorem  of \cite{BC2016} for powers of simple abelian schemes. 
This, together with previous results \cite{HabPila14}, \cite{BC2016} and \cite{BC2017} allows us to formulate a theorem for general abelian schemes over a curve.

Fix a number field $k$ and a smooth irreducible curve $S$ defined over $k$. We consider an abelian scheme $\cA$ over $S$ of relative dimension $g\geq 2$, also defined over $k$. This means that for each $s \in S(\C)$ we have an abelian variety $\cA_s$ of dimension $g$ defined over $k(s)$. 

Let $\cC$ be an irreducible curve in $\cA$ also defined over $k$ and not contained in a proper subgroup scheme of $\cA$, even after a base extension. A component of a subgroup scheme of $\cA$ is either a component of an algebraic subgroup of a fiber or it dominates the base curve $S$. A subgroup scheme whose irreducible components are all of the latter kind is called flat.

In the previous works \cite{BC2016} and \cite{BC2017} we proved that the intersection of $\cC$ with the union of all flat subgroup schemes of $\mc{A}$ of codimension at least $2$ is finite, when $\mc{A}$ is a fibered product of elliptic schemes.
In case $\cA$ is isotrivial, the same fact was proved by Habegger and Pila \cite{HabPila14} for any abelian variety defined over the algebraic numbers.


\begin{theorem}\label{thmscheme}
Let $k$ and $S$ be as above. Let $\cA\rightarrow S$ be an abelian scheme and $\mc{C}$ an irreducible curve in $\cA$ not contained in a proper subgroup scheme of $\cA$, even after a finite base change. Suppose that $\cA$ and $\cC$ are defined over $k$. Then, the intersection of $\mc{C}$ with the union of all flat subgroup schemes of $\mc{A}$ of codimension at least $2$ is a finite set.
\end{theorem}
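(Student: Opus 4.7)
The plan is to reduce Theorem~\ref{thmscheme} to the three available ingredients: the main new theorem of this paper (fibered powers of non-isotrivial simple abelian schemes of relative dimension $\geq 2$), the earlier works \cite{BC2016, BC2017} (fibered products of elliptic schemes), and \cite{HabPila14} (the isotrivial case). The first step is an isogeny decomposition: applying Poincaré reducibility to the generic fibre of $\cA$ and spreading out, possibly after a finite étale base change of $S$ that does not affect finiteness, one obtains an $S$-isogeny
\[
\varphi\colon \cA \longrightarrow \cB_{\mathrm{iso}} \times_S \prod_{i=1}^{r} \cA_i^{n_i},
\]
where $\cB_{\mathrm{iso}}$ is isotrivial and each $\cA_i$ is a non-isotrivial simple abelian scheme over $S$, the $\cA_i$ being pairwise non-isogenous. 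Since $\varphi$ is an isogeny, both the hypothesis on $\cC$ and the union of flat subgroup schemes of codimension $\geq 2$ pull back and push forward appropriately with finite fibres; hence it suffices to prove the theorem for the right-hand side and (any irreducible component of) $\varphi^{-1}(\cC)$.

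Because all factors in the product are pairwise non-isogenous (simple and of incompatible isotrivial/non-isotrivial type), the vanishing of $\mathrm{Hom}$-groups between distinct factors implies that every flat subgroup scheme of $\cA$ decomposes as a product $\cG_{\mathrm{iso}} \times_S \prod_{i} \cG_i$ of flat subgroup schemes in each factor, with codimensions summing to the codimension of $\cG$. A point $P \in \cC$ lying in such a $\cG$ of codimension $\geq 2$ therefore forces the projection of $\cC$ into one factor (or a pair of factors) to meet a flat subgroup scheme of codimension $\geq 2$ there. If the projection of $\cC$ to the relevant single factor is itself a curve, the appropriate ingredient theorem applies: \cite{HabPila14} for $\cB_{\mathrm{iso}}$, \cite{BC2016, BC2017} when $\cA_i$ is a non-isotrivial elliptic scheme, and the main theorem of this paper when $\cA_i$ is simple non-isotrivial of dimension $\geq 2$. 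If instead the projection is a point, the intersection is controlled fibrewise and reduces to Raynaud's theorem on Manin--Mumford.

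The main obstacle is the ``mixed'' subcase in which the codimension-$\geq 2$ condition is split across two different factor types (say one codimension-$1$ condition in the isotrivial part and one in a non-isotrivial simple part), so that no single ingredient theorem applies directly to the product of the two factors involved. My plan to overcome this is to assemble the Pila--Zannier style counting and height arguments from \cite{HabPila14} and from the main theorem of this paper on the product of the two factors, exploiting once more the vanishing of cross-factor Hom-groups to split the global condition into two independent codimension-$1$ conditions along a curve that dominates $S$, and then combining the resulting uniform height bounds. Once this mixed situation is handled, collecting the finitely many possibilities from each case produces the desired finite intersection.
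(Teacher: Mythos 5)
Your overall route is the same as the paper's: Poincar\'e reducibility applied to the generic fibre, spreading out over (a finite cover of) $S$, reduction along the resulting finite flat isogeny and base change, a factor-wise description of flat subgroup schemes, and then an appeal to \cite{HabPila14}, \cite{BC2016}, \cite{BC2017} and the new theorem for powers of a simple non-isotrivial scheme of relative dimension $\geq 2$. The genuine gap is exactly the step you flag as the ``main obstacle'' and then only sketch a plan for: the mixed case in which the codimension-$\geq 2$ condition would split as $1+1$ between two factors of different type. Your proposal to handle it by re-running a combined Pila--Zannier counting argument on the product of the two factors is new, unexecuted work, so as written the proof is incomplete. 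But the point is that this mixed case (at least in the form you describe, with one codimension-$1$ condition sitting in a factor which is a power of a simple abelian scheme of relative dimension $\geq 2$) simply does not occur. The paper's Lemma~\ref{lemmat} shows that every flat subgroup scheme of codimension $d$ is contained, up to a flat subgroup scheme of the same dimension, in $\ker\alpha$ for a matrix tuple $\alpha$ over the endomorphism rings, and the codimension of $\ker\alpha$ is $\sum_i \operatorname{rank}(\alpha_i)\,g_i$. Hence in a factor $\cB_{i}^{n_i}$ with $g_i\geq 2$ every proper flat subgroup scheme already has codimension $\geq g_i\geq 2$, and a codimension-$\geq 2$ flat subgroup scheme of the product either forces codimension $\geq 2$ in a single factor or splits between two factors both of relative dimension $1$; the latter configurations (two elliptic factors, isotrivial or not) are precisely what \cite{HabPila14} and \cite{BC2017} cover. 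With this observation your case analysis closes without any new counting argument, which is exactly how the paper proceeds.

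Two smaller inaccuracies: first, you assert that every flat subgroup scheme decomposes as a product $\cG_{\mathrm{iso}}\times_S\prod_i\cG_i$ ``by vanishing of cross-factor Homs''; what one actually gets (and all one needs) is that it is \emph{contained} in a flat subgroup scheme of the same dimension of product form, and this comes from the matrix-ring description of Lemma~\ref{lemmat} rather than from Hom-vanishing alone. Second, the subcase ``the projection of $\cC$ to a factor is a point'' cannot arise: since $\cC$ is not contained in a proper subgroup scheme it is not contained in a fibre, so it dominates $S$ and so does each projection; no appeal to Raynaud's theorem is needed. Likewise one should check (as the paper implicitly does) that the projected curve is not contained in a proper flat subgroup scheme of the factor, which follows because such a subgroup scheme would pull back to a proper flat subgroup scheme of $\cA$ containing $\cC$.
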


The above theorem follows from Conjecture 6.1 of \cite{pink}, but it does not imply the same statement for a curve in an abelian scheme. Indeed, in his conjecture, Pink considers subgroups of the fibers and these might not come from flat subgroup schemes for fibers with a larger endomorphism ring. To the authors' knowledge, Conjecture 6.1 of \cite{pink} (for the non-isotrivial case) has been settled only in the case of a curve in a fibered power of an elliptic scheme when everything is defined over $\Qbar$ (as a combination of \cite{BC2016} and \cite{BCM}), and for a curve in an abelian surface scheme, where the codimension $2$  algebraic subgroups of the fibers are torsion subgroups 
and the matter then reduces to the relative Manin-Mumford problem, settled by Masser and Zannier in a series of articles \cite{MZ12}, \cite{MZ14a}, \cite{MasserZannier15} and \cite{MZpreprint}. The first two papers deal also with the case of a complex curve in a product of complex elliptic schemes, while the same authors with Corvaja handled complex curves in complex simple abelian surface schemes in \cite{CMZ}.

If $\cA\rightarrow S$ is isotrivial, i.e., there exists a finite cover $S'\rightarrow S$ such that the generic fiber of $\cA \times_S S'\rightarrow S'$ is isomorphic to an abelian variety defined over the algebraic numbers, our Theorem \ref{thmscheme} is nothing but Theorem 1.1 of \cite{HabPila14}. Previous partial results for curves in certain abelian varieties appeared in \cite{RemVia}, \cite{Ratazzi08}, \cite{Carrizosa}, \cite{Viada2008} and \cite{galateau2010}.

Among the problems of unlikely intersections for families of abelian varieties, we shall also mention the recent result of Dill \cite{Dill}. Namely, Dill proved that, in the same setting of Theorem \ref{thmscheme}, given a fixed fiber $A_0$ of $\cA$ and a finite rank subgroup $\Gamma$ of $A_0$, there are at most finitely many points $\bo{c}\in \cC(\C)$ that lie in the image of $\Gamma$ under an isogeny, unless $\cC$ is contained in a translate of a torsion curve by a constant section of the constant part of $\cA$, generalizing an earlier result of Gao \cite{Gao17}.\\



As mentioned before, we deduce Theorem \ref{thmscheme} 
from a work of Habegger and Pila, previous works of the authors and a new theorem (Theorem \ref{maintheorem}) which is the main result of this article. In the latter we consider a simple non-isotrivial abelian scheme $\cB\rightarrow S$ of relative dimension $ g \geq2$ and an irreducible curve $\cC$ in its $n$-fold fibered power, not contained in a fixed fiber. This defines $n$ points $P_1, \dots , P_n \in \cB_\eta(k(\cC))$, where $\cB_\eta$ is the generic fiber of $\cB$. We suppose they are independent over $R$, the endomorphism ring of the generic fiber, i.e., the curve is not contained in a proper flat subgroup scheme of the ambient abelian scheme. In Theorem \ref{maintheorem} we show that there are at most finitely many points on $\cC$ such that $P_1, \dots , P_n$ become $R$-dependent after specialization. 

To prove Theorem \ref{maintheorem} we follow the Pila-Zannier strategy which has been largely employed in several works on unlikely intersections in different contexts. To deal with algebraic subgroups rather than torsion subgroups, we use ideas appeared in \cite{CMPZ} in the case of tori and in \cite{HabPila14} in the case of abelian varieties. From a general point of view, the strength  of this method lies in the fact that it allows to replace deep Dobrowolski-type or Bogomolov-type height lower bounds, needed in previous approaches to similar problems, which are known only in special cases, with weaker results that hold in more general settings.

To apply this strategy, we consider the abelian logarithms of the $P_i$ and their real coordinates in a basis for the period lattice of our abelian scheme. These give a subanalytic surface $Z$; moreover, points of $\cC$ for which the $P_i$ become dependent correspond to points on this surface lying on linear subvarieties with coefficients related to the coefficients of the relations between the $P_i$. A refinement of the Pila-Wilkie Theorem \cite{PilaWilkie}, due to Habegger and Pila \cite{HabPila14}, gives, for arbitrary $\epsilon>0$, an upper bound of order $T^{\epsilon}$ for the number of points of $Z$ lying on subspaces of the special form mentioned above and rational coefficients of height at most $T$, provided our abelian logarithms are algebraically independent over the field generated by the period basis. This is ensured by a result of Bertrand \cite{Bert11}. 

Now, to conclude the proof, we combine results of David \cite{David93}, Masser \cite{Masser88}, Masser-W\"ustholz \cite{MassWu}, Masser-Zannier \cite{MZpreprint}, Pazuki \cite{Pazu} and Silverman \cite{Sil83} (who gives a bound on the height of the points on $\cC$ we are considering) to show that the number of points on $Z$ considered above is of order at least $T^\delta$ for some $\delta>0$. Comparing the two estimates leads to an upper bound for $T$ and thus for the coefficients of the relation between the $P_i$, concluding the proof. \\


The second part of the paper is devoted to applications of Theorem \ref{thmscheme}. In Section \ref{aux} we deduce from Theorem \ref{thmscheme} an apparently stronger result (Theorem \ref{thmm}) which we state here.
As before we consider an abelian scheme $\cA$ over a smooth irreducible curve $S$, of relative dimension $g$ and an irreducible curve $\cC\subseteq \cA$, not contained in a fixed fiber. Everything is defined over a fixed number field $k$.

\begin{theorem} \label{thmaux}
	Let $m$ be an integer, $1\leq m \leq g$. If the intersection of $\cC$ with the union of all flat subgroup schemes of $\cA$ of codimension at least $m$ is infinite, then there exists a finite cover $S'\rightarrow S$ such that $\cC\times_S S'$ is contained in a flat subgroup scheme of $\cA\times_S S'$ of codimension at least $m-1$.
\end{theorem}

Note that, for $m=1$ the statement is trivial, for $m=2$ this is just a reformulation of Theorem \ref{thmscheme} and if $m=g$ this is Theorem 1.7 of \cite{MZpreprint}.

We will give two applications of the above statement. The first is a Mordell-Lang-type statement (Theorem \ref{thmSG}) which somehow resembles a recent result of Ghioca, Hsia and Tucker \cite{Ghioca17}. In the Appendix we show how their Theorem 1-1 can be deduced from the main results of \cite{BC2016} and \cite{BC2017}.

As a second application of Theorem \ref{thmaux} suggested by Zannier, we consider a function field variant of the classical Pell equation. The application of unlikely intersections results to the study of solvability of polynomial Pell equations was introduced for the first time by Masser and Zannier in \cite{MasserZannier15}. 

Let $D$ be a positive integer. As it is commonly known, the classical Pell equation is an equation of the form $A^2-DB^2=1$, to be solved in integers $A,B\neq 0$. It is a classical theorem of Lagrange that such an equation is non-trivially solvable if and only if $D$ is not a perfect square. 

To obtain a polynomial analogue, we replace $\Z$ with $K[X]$, for $K$ a field to be specified later. For a non-constant $D=D(X)\in K[X]$, one looks for solutions of
$$
A^2-DB^{2}=1,
$$
where $A(X),B(X)\in \Kbar[X]$ and $B\neq 0$.  We call a possible solution with $B=0$ trivial. 

The matter in the polynomial case is more complicated, and depends heavily on the choice of the field $K$. 
In this paper, we consider fields of characteristic $0$ and we call \textit{Pellian} the polynomials $D(X)$ such that the associated Pell equation has a non-trivial solution in $\overline{K}[X]$.  Moreover, we will always assume that $D$ is squarefree. 

A necessary condition for $D(X)$ to be Pellian is that $D(X)$ is not a square and has even degree $2d$. However, unlike the classical case, these conditions are also sufficient only if $D$ has degree $2$. For higher degrees there are examples of polynomials that satisfy these necessary conditions but are not Pellian (see \cite{Zannier_indam}). 

The problem of finding a solution of the Pell equation $A^2-DB^2=1$ translates to establishing whether a certain point has finite or infinite order in the Jacobian of the hyperelliptic curve $Y^2=D(X)$. Hence, applying results of unlikely intersection type, one can deduce results about the Pellianity of polynomials varying in a one-parameter family. Indeed, in \cite{MasserZannier15} Masser and Zannier investigated the problem for the one-dimensional family $D_t(X)=X^6+X+t$. Clearly, if the family were identically Pellian, then $D_{t_0}$ would be Pellian for every specialization $t_0 \in \C$.
However, it can be proved that this is not the case, and actually there are only finitely many complex $t_0$ for which the polynomial $X^6+X+t_0$ is Pellian (for example $t_0=0$). 



Of course there is nothing special about the family $X^6+X+t$, and in fact the same finiteness result is true for any non-identically Pellian squarefree $D \in \overline{\Q(t)}[X]$, of even degree at least $6$ and such that the Jacobian $J_D$ of the hyperelliptic curve $Y^2=D(X)$ does not contain a one-dimensional abelian subvariety, as proved by the same authors in \cite{MZpreprint}. In the same paper, the authors study also the case when $J_D$ contains an elliptic curve. Related results for non-squarefree $D$ appear in \cite{BMPZ} and \cite{Schm}. In this case, one must consider the generalized Jacobian of the singular curve $Y^2=D(X)$. \\

In \cite{MasserZannier15}, Masser and Zannier also studied equations of the form
$ A^2-(X^6+X+t)B^2=c'X+c$. More specifically, they proved that there are infinitely many complex $t_0$ for which there exist $A$ and non-constant $B$ in $\C[X]$ and $c'\neq 0$, $c$ in $\C$ satisfying the above equation.

If one instead fixes $c'$ and $c$, then finiteness is expected to hold. We are going to prove that it follows from our main result.

More generally, we consider a squarefree polynomial $D(X)\in K[X]$ of degree $2d>2$ and a non-zero polynomial $F(X)\in K[X]$. We are interested in the non-trivial solutions of the \textit{``almost-Pell equation''}, i.e.,
\begin{equation} \label{almostPell}
A^2-DB^2=F,
\end{equation}
where $A,B\in K[X]$ and $B\neq 0$. Note that this equation can have a trivial solution only if $F$ is a square.

Let $S$ be a smooth irreducible curve defined over a number field $k$ and $K$ be its function field $k(S)$. As before, if the equation 
\eqref{almostPell} is identically solvable (over $\overline{K}$), then it will remain solvable after specialization at every point $s_0 \in S(\C)$, except possibly for a Zariski-closed proper subset, and the solutions will be nothing but the specializations of a general solution. On the other hand, if it is not identically solvable, then we can still have points $s_0\in S(\C)$ such that the specialized equation
$A^2-D_{s_0}B^2=F_{s_0}$ has a solution $A,B$ in $\C[X]$ with $B\neq 0$, where we denote by $D_{s_0}$ and $F_{s_0}$ the polynomials in $k(s_0)[X]$ obtained specializing the coefficients of $D$ and $F$ in $s_0$. 

Again, the existence of a non-trivial solution translates to the existence of certain relations between particular points on the Jacobian $J_D$ of the hyperelliptic curve defined by $Y^2=D(X)$. Our Theorem \ref{thmscheme} allows us to deduce the following result.

\begin{theorem}\label{Pellthm}
Let $S$ be as above, $K=k(S)$ and let $D,F \in K[X]$ be non-zero polynomials. Assume that $D$ is squarefree and has even degree at least 6 and that $J_D$ contains no one-dimensional abelian subvariety over $\overline{K}$. Then, if the equation $A^2-DB^2=F$ does not have a non-trivial solution in $\Kbar[X]$, there are at most finitely many $s_0\in S(\C)$ such that the specialized equation $A^2-D_{s_0}B^2=F_{s_0}$ has a solution $A, B \in\C[X]$, $B\neq 0$.
\end{theorem}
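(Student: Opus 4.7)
The plan is to translate solvability of the almost-Pell equation into a linear-relation problem among sections of the relative Jacobian of the family of hyperelliptic curves $Y^2 = D(X)$, and then invoke Theorem~\ref{thmscheme}.

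First I recall the standard divisor-theoretic dictionary. Let $\mc{H}\to S$ be the family of hyperelliptic curves $Y^2 = D(X)$, with its two sections at infinity $\ip,\im$ and hyperelliptic involution $\iota$, and let $\mc{J}\to S$ be its relative Jacobian. Write $F = c\prod_{i=1}^m(X-\xi_i)$ over $\Kbar$ (for simplicity assume $F$ and $D$ are coprime) and choose a lift $P_i = (\xi_i,\sqrt{D(\xi_i)})\in\mc{H}(\Kbar)$ of each $\xi_i$. Analysing the divisor of $g = A - B\sqrt{D}$ via the identity $g\cdot\iota^*g = F$ shows that a non-trivial solution $(A,B)$ of $A^2-DB^2=F$ exists in $\Kbar[X]$ if and only if there are signs $\epsilon_i\in\{\pm 1\}$ and an integer $N$ such that
$$
\sum_{i=1}^{m}\epsilon_i\,[P_i-\im] - N\,[\ip-\im] = 0 \qquad\text{in } J_D(\Kbar).
$$
The hypothesis of the theorem is therefore exactly that no such relation holds in $J_D(\Kbar)$.

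Next I set up the input for Theorem~\ref{thmscheme}. After a finite base change $\tilde S\to S$ over which the $\xi_i$ and the chosen lifts $P_i$ are all rational, the classes $[P_i-\im]$ and $[\ip-\im]$ extend to sections $\sigma_1,\ldots,\sigma_m,\sigma_\infty$ of $\mc{J}|_{\tilde S}\to\tilde S$, and the morphism $s\mapsto(\sigma_1(s),\ldots,\sigma_\infty(s))$ cuts out an irreducible curve $\cC\subset\mc{J}^{m+1}|_{\tilde S}$. For every pair $(\bo{\epsilon},N)\in\{\pm 1\}^m\times\Z$ the homomorphism $\phi_{\bo{\epsilon},N}(x_1,\ldots,x_\infty)=\sum_i\epsilon_i x_i - Nx_\infty$ defines a flat subgroup scheme $K_{\bo{\epsilon},N}=\ker\phi_{\bo{\epsilon},N}$, and a specialisation $s_0\in\tilde S(\C)$ at which the specialised equation admits a non-trivial solution corresponds precisely to a point of $\cC$ lying in some $K_{\bo{\epsilon},N}$.

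Two verifications then deliver Theorem~\ref{thmscheme}. First, each $K_{\bo{\epsilon},N}$ must have codimension at least $2$: its codimension equals the generic dimension of the image of $\phi_{\bo{\epsilon},N}$, which is an abelian subvariety of $J_D$; since $(\bo{\epsilon},N)\ne(\mathbf{0},0)$, this image is non-zero, and the hypothesis that $J_D$ contains no one-dimensional abelian subvariety forces its dimension to be at least $2$. Second, $\cC$ has to avoid being contained in a proper subgroup scheme of the ambient abelian scheme; this is obtained by the standard reduction of replacing $\mc{J}^{m+1}|_{\tilde S}$ by the smallest abelian subscheme $H$ whose identity component contains a suitable translate $\cC'$ of $\cC$. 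Non-identical solvability implies $\phi_{\bo{\epsilon},N}|_H\ne 0$ for every $(\bo{\epsilon},N)$, so the same dimension argument gives codimension at least $2$ for $K_{\bo{\epsilon},N}\cap H$ inside $H$. Theorem~\ref{thmscheme} applied to $(H,\cC')$ shows that the set of specialisations $s_0\in\tilde S(\C)$ lying in $\cC\cap\bigcup_{(\bo{\epsilon},N)}K_{\bo{\epsilon},N}$ is finite, and finiteness over $S$ follows because $\tilde S\to S$ is finite.

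The main technical obstacle lies in the first step: the divisor bookkeeping for $g=A-B\sqrt{D}$ must track the cancellations at $\ip$ and $\im$ (which fix $N$ in terms of $\deg A,\deg B$) and handle the case where some $\xi_i$ coincides with a root of $D$, i.e.\ is a ramification point of $\mc{H}$, which modifies the divisor of $g$ but still produces a relation of the same shape in $J_D$. The reduction to the minimal subgroup scheme $H$ is standard in the Pila--Zannier framework; its only subtlety is that the codimension-$1$ alternative is ruled out exactly by the hypothesis on one-dimensional abelian subvarieties of $J_D$, which is therefore being used in an essential way.
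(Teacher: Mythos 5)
Your strategy is essentially the paper's: convert solvability of the almost-Pell equation into integral relations among the classes $[\alpha_i^{\pm}-\im]$ and $[\ip-\im]$ in $J_D$, and then control new relations at special fibres via Theorem~\ref{thmscheme}, reducing first to the smallest subgroup scheme containing a torsion translate of $\cC$ so that the hypothesis on one-dimensional abelian subvarieties forces every relevant kernel to have codimension at least $2$ (this is exactly what the paper packages as Corollary~\ref{cor1}, Corollary~\ref{ausxthm} and Lemmas~\ref{lemP1}--\ref{lemP2}).

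The genuine gap is in your dictionary. The theorem allows an arbitrary non-zero $F$, but you write $F=c\prod_{i=1}^m(X-\xi_i)$ and claim an ``if and only if'' with sign vectors $\epsilon_i\in\{\pm1\}$; this is only correct when $F$ is squarefree and coprime to $D$. For $F=\beta\prod(X-\alpha_i)^{a_i}$ with multiplicities, a non-trivial solution of the specialized equation only yields a relation $\sum_i g_i P_i(s_0)+lQ(s_0)=O$ with $|g_i|\le a_i$ and $g_i\equiv a_i \pmod 2$ (Lemma~\ref{lemP1}); the coefficients need not be $\pm a_i$ and they vary with $s_0$, so your family $K_{\bo{\epsilon},N}$, indexed only by $\{\pm1\}^m\times\Z$, does not capture all bad specializations. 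In the converse direction, a generic relation of this more general shape gives solvability with right-hand side $\beta\prod(X-\alpha_i)^{|g_i|}$ (Lemma~\ref{lemP2}), and one must pad by $\prod(X-\alpha_i)^{(a_i-|g_i|)/2}$, using the parity condition, to contradict the generic non-solvability with exponents $a_i$; the common-root case additionally needs the reduction of Remark~\ref{rem} (common roots of $D$ and $F$ taken with multiplicity one) to guarantee $B\neq 0$. Without these steps, what you prove is the special case of $F$ squarefree and prime to $D$, not the stated theorem.

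A smaller but real gap: your assertion that non-identical solvability forces $\phi_{\bo{\epsilon},N}|_H\neq 0$ does not follow. Since $\cC'$ is a torsion translate of $\cC$, the vanishing $\phi_{\bo{\epsilon},N}|_H=0$ only implies that $\phi_{\bo{\epsilon},N}(\cC)$ is a constant torsion section, which generic non-solvability makes non-zero but does not exclude. You must then argue separately that a non-zero torsion section of an abelian scheme over an irreducible base never specializes to the origin, so such pairs $(\bo{\epsilon},N)$ contribute no bad points; alternatively, handle the translate as the paper does, by pushing $\cC$ forward under a finite flat morphism onto the abelian subscheme and replacing $\ker\phi$ by $\ker(k\phi)$ for $k$ the order of the torsion section, which preserves flatness and codimension.
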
 

If $F$ is a non-zero constant in $K$, then we fall in the case of the Pell equation and the above theorem, as already mentioned above, is a consequence of Theorem 1.3 of \cite{MZpreprint}.

Let us finally see an example. For details and more examples we refer to Section \ref{Pell_example}. Let us consider the family defined by $D_t(X)=(X-t)(X^7-X^3-1)$ and $F(X)=4X+1$. The Jacobian of the hyperelliptic curve $Y^2=D_t(X)$ is identically simple, and it can be shown that the equation $A^2-D_tB^2=F$ has no solution in $\overline{\Q(t)}[X]$. Then, by Theorem \ref{Pellthm}, there are at most finitely many $t_0\in \C$ such that the specialized equation $A^2-D_{t_0}B^2=F$ has a solution $A, B \in\C[X]$, $B\neq 0$. For instance, for $t_0=0$, one has
$$ (2X^4+1)^2-X(X^7-X^3-1)2^2=4X+1.  $$ 

\section{Reduction to powers of simple abelian schemes} \label{proofschemes}

In this section, we use Poincar\'e Reducibility Theorem to reduce Theorem \ref{thmscheme} to four cases. Three of them have been dealt with in earlier works of the authors \cite{BC2016}, \cite{BC2017} and in the work of Habegger and Pila \cite{HabPila14}, while the fourth is considered in Theorem \ref{maintheorem} below. Part of this section is inspired by \cite{Hab}.

We recall our setting. We consider an abelian scheme $\mc{A}$ over a smooth irreducible curve $S$ and everything is defined over $\Qbar$. We call $\pi:\mc{A}\rightarrow S$ the structural morphism.

Note that, since we are proving a finiteness result, we are always allowed to replace $S$ by a non-empty open subset and we tacitly do so. This allows us to pass from an abelian variety defined over a function field of a curve to the corresponding abelian scheme over (a non-empty open subset of) the curve.

A subgroup scheme $G$ of $\mc{A}$ is a closed subvariety, possibly reducible, which contains the image of the zero section $S\rightarrow \mc{A}$, is mapped to itself by the inversion morphism and such that the image of $G\times_S G$ under the addition morphism is in $G$.
A subgroup scheme $G$ is called \emph{flat} if $\pi_{|_G} : G \rightarrow S$ is flat. By  \cite{Hart}, Proposition III.9.7, as $S$ has dimension $1$, this is equivalent to ask that all irreducible components of $G$ dominate the base curve $S$. 

Now we want to show that, in order to prove Theorem \ref{thmscheme}, we can perform finite base changes and isogenies.

\begin{lemma}\label{lemmabasechange}
Let $\cC$ be as in the hypotheses of Theorem \ref{thmscheme}. Let $\cA'=\cA\times_S S'$ for some finite cover $S'\rightarrow S$ and $f$ be the projection $\cA'\rightarrow\cA$. Then, if the claim of Theorem \ref{thmscheme} holds for all irreducible components of $f^{-1}(\cC)$ it holds for $\cC$.
\end{lemma}

\begin{proof}
First, we see that $f$ is flat because it is the product of two flat morphisms $\cA\rightarrow \cA$ and $S' \rightarrow S$. Moreover, since $\cA$ and $\cA'$ have the same dimension, by \cite{Hart} Corollary III.9.6, it follows that $f$ is quasi-finite, and finite, since it is also proper. By Corollary III.9.6 of \cite{Hart}, we have that if $X \subseteq \cA$ is an irreducible variety dominating $S$, as $f$ is finite and flat, each component of $f^{-1}(X)$ is a variety of the same dimension dominating $S'$.
It is clear now that if the hypotheses of Theorem \ref{thmscheme} hold for $\cC$ then they must hold for all components of $f^{-1}(\cC)$. Finally, the preimage of any point of $\cC$ lying in a flat subgroup scheme of $\cA$ of codimension at least 2 must lie in a flat subgroup scheme of the same codimension.
\end{proof}

\begin{lemma} \label{lemisog}
Let $\cA$ and $\cA'$ be abelian schemes over the same curve $S$ and let $f_\eta:\cA'_\eta \rightarrow \cA_\eta$ be an isogeny between the generic fibers defined over $k(S)$. Moreover, let $\cC\subseteq \cA$ be a curve satisfying the hypotheses of Theorem \ref{thmscheme}. Then, if the claim of Theorem \ref{thmscheme} holds for all irreducible components of $f^{-1}(\cC)$ it holds for $\cC$.
\end{lemma}

\begin{proof}
Since every abelian scheme is a N\'eron model of its generic fiber (see \cite{Neron} Proposition 8, p.~15), there exists a map $f:\cA'\rightarrow \cA$ extending the isogeny $\cA_\eta'\rightarrow \cA_\eta$. We first show that $f$ is finite and flat. Since $f_\eta : \cA_\eta'\rightarrow \cA_\eta$ is an isogeny, we know there exists an isogeny $h_\eta : \cA_\eta\rightarrow \cA_\eta'$ such that, for some positive integer $d$, the compositions $f_\eta \circ h_\eta$ and $h_\eta \circ f_\eta$  are the multiplication-by-$d$ endomorphisms on $\cA_\eta$ and $\cA_\eta'$, respectively. Now, such morphisms uniquely extend to the multiplication-by-$d$ maps (which we just indicate by $d$) on the whole schemes. Therefore, for all $s \in S$, we have that $f_s \circ h_s$ and $h_s \circ f_s$ are finite surjective morphisms and so $f_s$ is a finite and surjective morphism between non-singular varieties. Moreover, $f_s$ is flat and by \cite{Neron}, Proposition 2, p.~52, also $f$ must be flat. Now, $f$ must be quasi-finite and also proper because $\cA' \rightarrow \cA \rightarrow S$ is proper and therefore $f$ is finite. Finally, since all $f_s$ are isogenies, the map $f$ must respect the group law of $\cA'$ and $\cA$. 
Now the claim follows as in the previous lemma since after a base change we have that images and preimages via $f$ of flat subgroup schemes are contained in flat subgroup schemes of the same dimension.
\end{proof}

Now we want to show that it is enough to prove Theorem \ref{thmscheme} for products of simple abelian schemes. Consider the generic fiber $\cA_\eta$ of $\cA$ as an abelian variety defined over $k(S)$. It is well known that every abelian variety is isogenous to a product of simple abelian varieties, see for instance \cite{Hindry2000}, Corollary A.5.1.8. Therefore there are geometrically simple pairwise non-isogenous abelian varieties $B_1, \dots , B_m$, such that $\cA_\eta$ is isogenous to $A':=\prod_{i} B_i^{n_i}$. Note that $A'$ and the isogeny might not be defined over $k(S)$ but over a finite extension which has the shape $k(S')$ for some irreducible, non-singular curve $S'$ covering $S$. By Lemma \ref{lemmabasechange} we can suppose $S'=S$. For the same reason we can suppose that the endomorphism ring of $A'$ is defined over $k(S)$. The abelian varieties $B_i$ will extend to abelian schemes $\cB_i \rightarrow S$ and we define $\cA'$ to be the fibered product over $S$ of the $n_i$-th fibered powers of the $\cB_i$. We can now apply Lemma \ref{lemisog} and we are reduced to proving Theorem \ref{thmscheme} for products of simple abelian schemes. \\

We are now going to describe flat subgroup schemes of $\cA$, which is a fibered product $\mc{A}_1 \times_S \dots \times_S \mc{A}_m$ where $\mc{A}_i$ is the $n_i$-th fibered power of $\mc{B}_i$ and $\cB_1, \dots , \cB_m$ are abelian schemes whose generic fibers are pairwise non-isogenous geometrically simple abelian varieties. Moreover, we let $R_i$ be the endomorphism ring of $B_i$ which we can suppose to be defined over $k(S)$.

Fix $i_0$, with $1\leq i_0 \leq m$. For every $\bo{a}=(a_1, \dots , a_{n_{i_0}})\in R_{i_0}^{n_{i_0}} $ we have a morphism $\bo{a}:\mc{A}_{i_0} \rightarrow \mc{B}_{i_0}$ defined by
$$
\bo{a}(P_1,\dots, P_{n_{i_0}})=a_1 P_1 +\dots +a_{n_{i_0}} P_{n_{i_0}}.
$$
We identify the elements of $R_{i_0}^{n_{i_0}}$ with the morphisms they define. The fibered product $\bo{a}_1 \times_S \dots \times_S \bo{a}_r$, for $\bo{a}_1 , \dots , \bo{a}_r \in R_{i_0}^{n_{i_0}}$ defines a morphism $\mc{A}_{{i_0}} \rightarrow \mathcal{A}''$ over $S$ where $\mathcal{A}''$ is the $r$-fold fibered power of $\cB_{{i_0}}$. Therefore, square matrices with entries in $R_{i_0}$ and appropriate size will define endomorphisms of $\mc{A}_{i_0}$.
Finally, every endomorphism of $\cA$ is given by fibered products of such endomorphisms, and is represented by an $m$-tuple in $\prod \text{Mat}_{n_i}(R_i)$ which forms a ring we call $R$.

If $\alpha\in R$, the kernel of $\alpha$ is the fibered product of $\alpha: \cA \rightarrow \cA$ with the zero section $S\rightarrow \cA$. We will denote it by $\ker \alpha$ and we will consider it as a closed subscheme of $\cA$. Let $g_i$ be the relative dimension of $\cB_i$ over $S$. If $\alpha=(\alpha_1, \dots , \alpha_m)\in R$, we define the rank $r(\alpha)$ of $\alpha$ to be $\sum \text{rank}(\alpha_i)g_i$.
In practice, $\ker \alpha$ is a closed subscheme of $\cA$ obtained by imposing $\text{rank}(\alpha_i)$ independent equations on each factor $\cA_i$.

\begin{lemma}\label{lemmat}
	Let $G$ be a flat subgroup scheme of $\mc{A}$ of codimension $d$. Then, there exists an $\alpha \in R$ of rank $d$ such that $G\subseteq \ker \alpha$ and, for any $\alpha$ of rank $d$, $\ker \alpha$ is a flat subgroup scheme of codimension $d$.
\end{lemma}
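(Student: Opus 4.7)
The plan is to pass to the generic fiber $\cA_\eta = \prod_i B_i^{n_i}$, exploit the Poincar\'e decomposition together with the simplicity and pairwise non-isogeny of the $B_i$ to reduce to linear algebra over the endomorphism rings $R_i$, and then extend the resulting construction back to $\cA$ after possibly shrinking $S$. Throughout, I use that a flat subgroup scheme of $\cA$ is determined as the Zariski closure of its generic fiber, and that every endomorphism of $\cA_\eta$ extends to an endomorphism of $\cA$ after such a shrinking.

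For the existence of $\alpha$, I would start with a flat subgroup scheme $G$ of codimension $d$ and consider its generic fiber $G_\eta \subseteq \cA_\eta$, an algebraic subgroup of codimension $d$. Because the $B_i$ are simple and pairwise non-isogenous, $\mathrm{Hom}(B_i^{n_i}, B_j^{n_j}) = 0$ for $i \ne j$, and so $G_\eta$ splits as a product $\prod_i H_i$ with $H_i \subseteq B_i^{n_i}$ of codimension $c_i$, where $\sum_i c_i = d$. Poincar\'e reducibility inside $B_i^{n_i}$ shows that the identity component $H_i^0$ is isogenous to $B_i^{k_i}$ for some $k_i$ with $k_i g_i = n_i g_i - c_i$, so $B_i^{n_i}/H_i^0$ is isogenous to $B_i^{n_i-k_i}$. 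Composing the quotient map with such an isogeny gives a morphism $B_i^{n_i} \to B_i^{n_i-k_i}$ represented by a rectangular matrix of rank $n_i-k_i$ over $R_i$, which I pad with zero rows to obtain a square $\alpha_i \in \mathrm{Mat}_{n_i}(R_i)$ of the same rank whose kernel contains $H_i^0$. Replacing $\alpha_i$ by $N\alpha_i$, where $N$ is the exponent of the finite group $H_i/H_i^0$, preserves the rank and enforces $H_i \subseteq \ker \alpha_i$. The tuple $\alpha=(\alpha_i)_i$ then has rank $\sum_i (n_i-k_i)g_i = d$ and its kernel contains $G_\eta$, hence, by taking Zariski closures, contains $G$.

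For the converse statement, suppose $\alpha=(\alpha_i)_i \in R$ has rank $d$ with $\alpha_i$ of rank $r_i$, so $\sum_i r_i g_i = d$. On the generic fiber the image of $\alpha_i$ is isogenous to $B_i^{r_i}$ and its kernel is equidimensional of dimension $(n_i-r_i)g_i$. Since $\alpha_i$ comes from a global endomorphism, the fiberwise dimensions of image and kernel are semicontinuous functions of $s \in S$, and after shrinking $S$ they become constant and equal to the generic values. Therefore $\ker \alpha$ has pure fiberwise dimension $\sum_i (n_i-r_i)g_i = \dim \cA_\eta - d$; all its irreducible components have this dimension and dominate $S$, so it is a flat subgroup scheme of codimension $d$.

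The main technical subtlety is controlling the passage between the identity component $H_i^0$ and the full subgroup $H_i$, which is handled by the multiplication-by-$N$ trick, and ensuring that the linear-algebraic constructions on the generic fiber propagate uniformly across $S$, which is justified by the freedom to shrink $S$ to a non-empty open subset, as permitted by the reduction at the start of the section.
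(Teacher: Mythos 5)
Your route is genuinely different from the paper's: the paper does not reprove the statement at the generic fiber but says it "follows the line of the proof of Lemma 2.5 of [Hab]" and invokes Noot's Corollary 1.5 to produce a fiber $\cA_s$ whose endomorphism ring is exactly $R$. Unfortunately, your argument has a gap at precisely the point that citation is meant to handle: the ring in which your matrices live. The isogeny $B_i^{n_i}/H_i^0\sim B_i^{\,n_i-k_i}$ supplied by Poincar\'e reducibility and geometric simplicity is a priori only defined over $\overline{k(S)}$ (or a finite extension of $k(S)$), so the rectangular matrix you extract has entries in $\End(B_{i,\overline{\eta}})$, which may be strictly larger than $R_i=\End(\cB_i)$; nothing in your construction places $\alpha$ in $R=\prod_i \mathrm{Mat}_{n_i}(R_i)$, which is what the lemma asserts (and what the application to flat subgroup schemes needs, since only elements of $R$ extend to endomorphisms of $\cA$ over $S$). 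Your supporting remark that "every endomorphism of $\cA_\eta$ extends to an endomorphism of $\cA$ after shrinking $S$" is incorrect for endomorphisms defined only over a finite extension of $k(S)$: shrinking does not help, one needs a finite base change, which changes $R$ itself; this distinction is exactly why the paper later stresses that $R$ may be a proper subring of $\End(\cB_s)$ and why Noot's result is quoted. The gap is repairable while staying at the generic fiber: $G_\eta$ is a $k(S)$-subgroup, so its identity component $H$ is a $k(S)$-rational abelian subvariety; Poincar\'e reducibility over $k(S)$ gives a complement $H'$ with $H+H'=\cA_\eta$ and $H\cap H'$ finite, and the composite $\cA_\eta\to\cA_\eta/H\to H'\hookrightarrow\cA_\eta$ (the middle arrow being a quasi-inverse of the restriction of the quotient map to $H'$, after multiplying by a suitable integer) lies in $\End_{k(S)}(\cA_\eta)=R$, has rank $d$ in the sense of the paper, and kills $H$; your multiplication-by-$N$ trick then kills all of $G_\eta$, and taking closures gives $G\subseteq\ker\alpha$. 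But as written, this key point is missing.

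Two smaller issues. First, a disconnected $G_\eta$ need not split as a product $\prod_i H_i$ even when $\mathrm{Hom}(B_i,B_j)=0$ for $i\neq j$ (think of a finite subgroup generated by a single pair of torsion points in two different factors); only the identity component splits, so the $N$-trick must be applied to the component group of $G_\eta$ as a whole rather than factorwise --- harmless, but the splitting claim as stated is false. Second, in the converse direction, constancy of the fiber dimension of $\ker\alpha$ does not by itself exclude irreducible components of $\ker\alpha$ contained in a single fiber; you need either the standing convention that $S$ may be replaced by a non-empty open subset (the finitely many vertical components then disappear), or the local constancy of the period-lattice description of $\ker\alpha_s$ as in Habegger's lemma. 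Since you do allow shrinking, this is a matter of spelling out the deduction rather than a mathematical obstruction.
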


\begin{proof}
	The lemma can be proved following the line of the proof of Lemma 2.5 of \cite{Hab}. The fact that there is an $s\in S(\C)$ such that the endomorphism ring of $\cA_s$ is exactly $R$ follows from Corollary 1.5 of \cite{Noot}.
\end{proof}

From this lemma we can deduce that each flat subgroup scheme of $\cA$ is contained in a flat subgroup scheme of the same dimension and of the form
$$
G=G_1\times_S \dots \times_S G_m, 
$$
where, for every $i=1, \ldots, m$, $G_i$ is a flat subgroup scheme of $\mc{A}_i$. Now, we are interested in flat subgroup schemes of codimension at least 2. If $g_{i_0}\geq 2 $, then any proper flat subgroup scheme of $\mc{A}_{i_0}$ has codimension at least 2. This implies that any flat subgroup scheme of $\cA$ of codimension at least 2 is contained in a $G=G_1\times_S \dots \times_S G_m$ of codimension at least 2 where all $G_i = \mc{A}_i$ except for one index $i_0$ or two indexes $i_1, i_2$ with $g_{i_1}=g_{i_2}=1$. 
It is then clear that, by projecting on the factors, we only need to prove our Theorem \ref{thmscheme} in the following cases:
\begin{enumerate}
	\item $\mc{A}$ is isotrivial, i.e., it is isomorphic to a constant abelian variety after a finite base change;
	\item $\mc{A}$ is not isotrivial, $m=1$ and $g_1=1$;
	\item $\mc{A}$ is not isotrivial, $m=2$ and $g_1=g_2=1$;
	\item $\mc{A}$ is not isotrivial, $m=1$ and $g_1\geq 2$.
\end{enumerate}

In the first three cases the statement of Theorem \ref{thmscheme} follows respectively from  
\begin{enumerate}
	\item the work of Habegger and Pila \cite{HabPila14};
	\item Theorem 2.1 of \cite{BC2016};
	\item Theorem 1.3 of \cite{BC2017}.
\end{enumerate}

In case (4), we have a non-isotrivial abelian scheme $\mc{A} \rightarrow S$ which is the $n$-th fold fibered power of a simple abelian scheme $\cB$ of relative dimension at least 2.
As above, $\pi $ indicates the structural morphism $\mc{A} \rightarrow S$.

Now, since our irreducible curve $\cC$ in $\cA$, also defined over $k$, is not contained in a fixed fiber, it defines a point in $\cA_\eta(k(\cC))$ or, equivalently, $n$ points $P_1, \dots , P_n \in \cB_\eta(k(\cC))$, while for any $\bo{c} \in \cC(\C)$ we have a specialized point of $\cA_{\pi(\bo{c})}(k(\bo{c}))$ or $n$ points $P_1(\bo{c}), \dots , P_n(\bo{c}) \in \cB_{\pi(\bo{c})}(k(\bo{c}))$.

Let $R$ be the endomorphism ring of $\cB_\eta$, which we supposed to be defined over $k(S)$. Every element of $R$ specializes to an element of $\End(\cB_s)$ and this specialization map is injective. By abuse of notation we indicate by $R$ the specializations of $\End(\cB_\eta)$. Note that for some $s$ one may have $R\subsetneq \End(\cB_s)$.

The points $P_1, \dots , P_n$ defined by $\cC$ might or might not satisfy one or more relations of the form
$$
\rho_1 P_1 + \dots + \rho_n P_n =O,
$$
for some $\rho_1, \dots , \rho_n \in R$, not all zero, where $O$ is the origin of $\cB_\eta$. If they do, then clearly the same relations hold for all specializations $P_1(\bo{c}), \dots ,P_n(\bo{c})$. On the other hand, for a specific $\bo{c}$, some new relations might arise, with coefficients in $R$ or in the possibly larger $\End\left(\cB_{\pi(\bo{c})}\right)$.

As we have seen above, flat subgroup schemes correspond to linear relations over $R$ so we consider the case in which no generic relation holds and prove that there are at most finitely many specializations such that the points satisfy a relation with coefficients in $R$.

The following theorem deals with case (4) and completes the proof of Theorem \ref{thmscheme}.

\begin{theorem}\label{maintheorem}
Let $\cA\rightarrow S$ and $\cC$ be as above. Suppose that the points $P_1, \dots , P_n$ defined by $\cC$ are $R$-independent and that $\cC$ is not contained in a fixed fiber. Then, there are at most finitely many $\bo{c} \in \cC(\C)$ such that there exist $\rho_1, \dots , \rho_n \in R$, not all zero, with
$$
\rho_1 P_1(\bo{c}) + \dots + \rho_n P_n(\bo{c}) =O,
$$
on $\cB_{\pi(\bo{c})}$.
\end{theorem}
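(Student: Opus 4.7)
The plan is to carry out the Pila--Zannier strategy already sketched in the introduction. After possibly shrinking $S$ I would fix a simply connected open neighborhood $U\subset S(\C)$ over which a basis $\per_1(s),\dots,\per_{2g}(s)$ of the period lattice of $\cB_s$ varies holomorphically and the uniformization $\C^g\to\cB_s$ is well defined. Pulling back the $n$ sections $P_i:\cC\to\cB$, I choose analytic lifts $\bz_i(\bo{c})\in\C^g$ of $P_i(\bo{c})$ and write
\[
\bz_i(\bo{c})=\sum_{j=1}^{2g} b_{ij}(\bo{c})\,\per_j(\pi(\bo{c})),\qquad b_{ij}(\bo{c})\in\R.
\]
The resulting real analytic map from (a relatively compact piece of) $\cC(\C)$ into $\R^{2gn}$ has as image a two-dimensional subanalytic surface $Z$, definable in $\R_{\mathrm{an},\exp}$.

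An $R$-linear relation $\rho_1P_1(\bo{c})+\cdots+\rho_nP_n(\bo{c})=O$ translates, after expressing each $\rho_i$ in a fixed $\Z$-basis of $R$ via its action on $\C^g$, into a system of $2g$ $\Q$-linear equations in the $2gn$ real coordinates $b_{ij}(\bo{c})$ whose coefficients have height controlled by $T=\max_i H(\rho_i)$ (an appropriate size of $\rho_i$ in $R$). Thus the set of $\bo{c}\in\cC(\C)$ giving $R$-dependent specializations corresponds exactly to the points of $Z$ lying on rational linear subspaces of a prescribed ``special'' shape and coheight at most $T$. To count such points I would invoke the Habegger--Pila refinement of Pila--Wilkie for special subvarieties: its hypothesis requires that the abelian logarithms $\bz_1(\bo{c}),\dots,\bz_n(\bo{c})$ be algebraically independent over the field $k(S)(\per_1,\dots,\per_{2g})$, and this is supplied by Bertrand's transcendence theorem applied to our $R$-independent sections on the non-isotrivial simple abelian scheme $\cB$. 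This yields an upper bound $O_\epsilon(T^\epsilon)$ on the number of relevant algebraic points on $Z$.

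For the matching lower bound I would combine Silverman's specialization theorem, which controls N\'eron--Tate heights of $P_i(\bo{c})$ in terms of the Weil height of $\bo{c}$, with the isogeny/kernel estimates of Masser--W\"ustholz, David and Pazuki. Together these imply that, for a specialization $\bo{c}_0$ producing a non-trivial $R$-relation, the whole Galois orbit of $\bo{c}_0$ over $k$ produces many distinct relations with conjugate coefficients of comparable size, and that the minimal norm $T(\bo{c}_0)$ of such a relation is bounded polynomially by the degree $[k(\bo{c}_0):k]$. This yields a count of at least $T^\delta$ for some $\delta>0$, depending only on the data. Comparing with the $O_\epsilon(T^\epsilon)$ upper bound forces $T$ to be bounded, after which an elementary argument (finiteness of elements of bounded norm in $R$ modulo torsion, combined with the generic $R$-independence of the $P_i$) produces the desired finiteness.

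The hardest step is certainly the counting one. The subtlety is not Pila--Wilkie itself but verifying the precise algebraic-independence hypothesis of Habegger--Pila in the presence of a non-trivial endomorphism ring $R$: one must check that Bertrand's conclusion yields independence over the full period field and that the ``special'' subspaces encoding $R$-linear relations (rather than merely $\Z$-linear ones) fit into the framework of the counting theorem. Once this is in place and the exponent $\delta$ in the lower bound is extracted uniformly in $R$, the comparison of heights collapses and Theorem \ref{maintheorem} follows.
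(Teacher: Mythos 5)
Your proposal follows essentially the same Pila--Zannier route as the paper: Betti coordinates of the abelian logarithms on compact discs, the Habegger--Pila counting theorem with Bertrand's functional transcendence ruling out excess rational points (this is where the hypothesis $g\ge 2$ is used, via a transcendence-degree count over the period field), and a lower bound on Galois conjugates obtained from Silverman's specialization theorem together with David, Masser--W\"ustholz, Pazuki and Masser's results giving small generators of the relation lattice, finishing by comparing the two estimates. The minor variations (working with the $2gn$ Betti coordinates of the $\bz_i$ and integer matrices rather than the paper's coordinates of the $\varphi_j P_i$, and closing via boundedness of the relation coefficients instead of Northcott) are inessential, so this is the paper's argument in outline.
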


In case $n=1$ then one has a single point which is not generically torsion. There are at most finitely many specializations such that the point is torsion. As already mentioned in the Introduction, this was proved for $g=2$ by Masser and Zannier in \cite{MasserZannier15} and jointly with Corvaja in \cite{CMZ} when everything is defined over $\C$, while the case of arbitrary $g$ has been considered in \cite{MZpreprint}, again with the abelian scheme and the curve defined over the algebraic numbers.

It is easy to see that Theorem \ref{maintheorem} allows us to deduce Theorem \ref{thmscheme} in case (4).
Indeed, by Lemma \ref{lemmat}, a point $\bo{c}$ is contained in a flat subgroup scheme of codimension $\geq 2$ if and only if there is a non-trivial $R$-relation between $P_1 (\bo{c}), \dots ,P_n (\bo{c}) $ and $\cC$ is not contained in a proper subgroup scheme if and only if it is not contained in a fixed fiber and $P_1, \dots , P_n$ are generically $R$-independent. This concludes the proof of Theorem \ref{thmscheme}.

\section{O-minimality and point counting}

For the basic properties of o-minimal structures we refer to \cite{vandenDries1998} and \cite{DriesMiller}.

\begin{definition}
A \textit{structure} is a sequence $\mathcal{S}=\left( \mathcal{S}_N\right)$, $N\geq 1$, where each $\mathcal{S}_N$ is a collection of subsets of $\R^N$ such that, for each $N,M \geq 1$:
\begin{enumerate}
\item $\mathcal{S}_N$ is a boolean algebra (under the usual set-theoretic operations);
\item $\mathcal{S}_N$ contains every semialgebraic subset of $\R^N$;
\item if $A\in \mathcal{S}_N$ and $B\in \mathcal{S}_M$ then $A\times B \in \mathcal{S}_{N+M}$;
\item if $A \in \mathcal{S}_{N+M}$ then $\pi (A) \in \mathcal{S}_N$, where $\pi :\R^{N+M}\rightarrow \R^N$ is the projection onto the first $N$ coordinates.
\end{enumerate}
If $\mathcal{S}$ is a structure and, in addition,
\begin{enumerate}
\item[(5)] $\mathcal{S}_1$ consists only of all finite unions of open intervals and points,
\end{enumerate}
then $\mathcal{S}$ is called an \textit{o-minimal structure}.
\end{definition}
Given a structure $\mathcal{S}$, we say that $S \subseteq \R^N$ is a \textit{definable} set if $S\in \mathcal{S}_N$. 

Let $U\subseteq \R^{M+N}$ and let $\pi_1$ and $\pi_2$ be the projection maps on the first $M$ and on the last $N$ coordinates, respectively. Now, for $t_0\in \pi_1(U)$, we set $U_{t_0}=\{ x\in \R^N: (t_0,x) \in U \}$ and call $U$ a \emph{family} of subsets of $\R^N$, while $U_{t_0}$ is called the \emph{fiber} of $U$ above $t_0$. If $U$ is a definable set, then we call it a \emph{definable family} and one can see that the fibers $U_{t_0}$ are definable sets too. Let $S\subseteq \R^N$ and $f:S\rightarrow \R^M$ be a function. We call $f$ a \emph{definable function} if its graph $\lg (x,y) \in S\times \R^{M}:y=f(x) \rg$ is a definable set. It is not hard to see that images and preimages of definable sets via definable functions are still definable.

There are many examples of o-minimal structures, see \cite{DriesMiller}. In this article we are interested in the structure of \textit{globally subanalytic sets}, usually denoted by $\R_{\text{an}}$. We are not going to pause on details about this structure because it is enough for us to know that if $D\subseteq \R^N$ is a compact definable set, $I$ is an open neighbourhood of $D$ and $f:I\rightarrow \R^M$ is an analytic function then $f(D)$ is definable in $\R_{\text{an}}$. The fact that $\R_{\text{an}}$ is o-minimal follows from the work of Gabrielov \cite{Gabrielov}.

\begin{proposition}[\cite{DriesMiller}, 4.4] \label{unifbound}
Let $U$ be a definable family. There exists a positive integer $\gamma$ such that each fiber of $U$ has at most $\gamma$ connected components.
\end{proposition}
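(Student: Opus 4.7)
The plan is to derive this from the cell decomposition theorem, the fundamental structural result underpinning o-minimality. Recall that in any o-minimal structure every definable subset of $\R^n$ admits a finite partition into \emph{cells}, where a cell is a connected definable set built recursively as either the graph of a continuous definable function, or an open band between two such graphs (possibly $\pm\infty$), over a cell in one fewer coordinate.

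I would then apply the parametric version of the theorem to $U \subseteq \R^{M+N}$, writing coordinates as $(t,x)$ with $t \in \R^M$ and $x \in \R^N$. The parametric statement yields a finite cell decomposition $U = C_1 \sqcup \cdots \sqcup C_k$ compatible with the projection $\pi_1$ onto the first $M$ coordinates, meaning that for every $t_0 \in \R^M$ each fiber $(C_i)_{t_0} \subseteq \R^N$ is either empty or is itself a single cell in $\R^N$; in particular it is connected. Hence
\[
U_{t_0} = \bigsqcup_{i=1}^{k} (C_i)_{t_0}
\]
is a disjoint union of at most $k$ connected definable sets, and setting $\gamma = k$, which depends only on $U$ and not on $t_0$, gives the desired uniform bound.

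The only real work, which I would treat as a black box, is the cell decomposition theorem itself, proved by simultaneous induction on the ambient dimension using the o-minimal monotonicity theorem for definable functions of a single real variable together with the axiom that definable subsets of $\R$ are finite unions of points and intervals. This is the main obstacle in the subject, but I would not redo the induction here: once cell decomposition is granted, the proposition is an immediate combinatorial consequence, as the number $k$ of cells in a decomposition adapted to the projection serves as the required uniform bound.
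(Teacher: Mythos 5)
Your proof is correct and follows the standard route: the paper does not prove this proposition at all but simply cites van den Dries--Miller, 4.4, where the result is obtained exactly as you describe, from finite cell decomposition together with the facts that the fiber of a cell over a point of its projection onto the first $M$ coordinates is again a cell and that cells in an o-minimal structure over the real field are connected. The only cosmetic remark is that no special ``parametric'' version of cell decomposition is needed -- any decomposition of $\R^{M+N}$ partitioning $U$ has the fiberwise property automatically, since cells are defined relative to the coordinate ordering -- so your bound $\gamma=k$ works verbatim.
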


We now need to define the height of a rational point. The height used in \cite{HabPila14} is not the usual projective Weil height, but a coordinatewise affine height. If $a/b$ is a rational number written in lowest terms, then $H(a/b)= \max \{|a|,|b|\}$ and, for an $N$-tuple $(\alpha_1, \dots , \alpha_N) \in \Q^N$, we set $H(\alpha_1, \dots , \alpha_N)= \max H(\alpha_i)$.
For a family $Z$ of $\R^{M+N}$ and a positive real number $T$ we define
\begin{equation}\label{def}
Z^{\sim}(\Q,T)=\lg (y,z)  \in  Z: y\in \Q^M, H(y) \leq T \rg.
\end{equation}
We let $\pi_1$ and $\pi_2$ be the projection maps from $Z$ to the first $M$ and last $N$ coordinates respectively.

The following is a consequence of Corollary 7.2 of \cite{HabPila14}. 

\begin{proposition}[\cite{HabPila14}, Corollary 7.2]\label{HabPila}
For every $\epsilon>0$, then there exists a constant $c=c(Z,\epsilon)$ with the following property. If $T\geq 1 $ and $|\pi_2(Z^{\sim}(\Q,T))|> c T^\epsilon$, then there exists a continuous definable function $\delta:[0,1] \rightarrow Z$ such that
\begin{enumerate}
\item the composition $\pi_1 \circ \delta : [0,1] \rightarrow \R^M$ is semi-algebraic and its restriction to $(0,1)$ is real analytic;
\item the composition $\pi_2 \circ \delta : [0,1] \rightarrow \R^N$ is non-constant.
\end{enumerate}
\end{proposition}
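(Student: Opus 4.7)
The strategy is to invoke the block form of the Pila--Wilkie counting theorem in the \emph{semi-rational} setting developed by Pila, which is designed precisely to count points of a definable set where rationality is imposed on only some of the coordinates (here, the first $M$ among $M+N$). For every $\epsilon>0$, the theorem produces a definable family of connected semi-algebraic blocks $W \subseteq Z$ of positive dimension such that, apart from at most $c_0 T^\epsilon$ exceptional points, every element of $Z^{\sim}(\Q,T)$ lies in one of at most $c_0 T^\epsilon$ such blocks, for a constant $c_0 = c_0(Z,\epsilon)$.

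First I would apply this block theorem to $Z$, with the constant $c = c(Z,\epsilon)$ chosen sufficiently larger than $c_0$. The hypothesis $|\pi_2(Z^{\sim}(\Q,T))|>cT^\epsilon$ then forces, by a pigeonhole argument on the $z$-coordinates, the existence of a single block $W$ containing two points of $Z^{\sim}(\Q,T)$ whose $\pi_2$-images are distinct, say $(y_1,z_1)$ and $(y_2,z_2)$ with $z_1 \neq z_2$. Indeed, if every block contributed at most one $\pi_2$-value, the total count of $\pi_2$-images would be bounded by the number of blocks plus the exceptional set, both of size $\leq c_0 T^\epsilon$, contradicting the hypothesis.

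Next I would construct the arc inside $W$. Since $W$ is a connected semi-algebraic set, it is semi-algebraically path-connected, hence there exists a continuous semi-algebraic map $\delta_0 : [0,1] \to W \subseteq Z$ with $\delta_0(0) = (y_1,z_1)$ and $\delta_0(1) = (y_2,z_2)$. Both $\pi_1 \circ \delta_0$ and $\pi_2 \circ \delta_0$ are then semi-algebraic, and $\pi_2 \circ \delta_0$ is non-constant because it takes the distinct values $z_1$ and $z_2$ at the endpoints. To guarantee real-analyticity of $\pi_1 \circ \delta_0$ on the whole open interval $(0,1)$, apply the cell decomposition theorem for semi-algebraic functions to produce a sub-interval $[a,b] \subset [0,1]$ on which $\pi_1 \circ \delta_0$ is analytic and $\pi_2 \circ \delta_0$ is still non-constant (such an interval exists because $\pi_2 \circ \delta_0$ is piecewise analytic and non-constant), then reparameterize $[a,b]$ back to $[0,1]$.

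The main obstacle is the block form of Pila--Wilkie itself, a substantial theorem resting on Yomdin--Gromov $C^r$-parametrizations and a delicate induction on the dimension of $Z$. The semi-rational refinement required here demands uniform control on the blocks as the non-rational $z$-coordinates vary, and it is precisely this uniformity that ensures the constant $c$ depends only on $Z$ and $\epsilon$ rather than on $T$. Once this machinery is granted, the deduction above reduces to a pigeonhole argument together with standard o-minimal tools (semi-algebraic path-connectedness and cell decomposition).
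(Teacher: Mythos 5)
You should note first that the paper does not prove this proposition at all: it is quoted verbatim from \cite{HabPila14}, Corollary 7.2, and used as a black box, so what you are really attempting is a reproof of Habegger--Pila's result. Your reduction rests on an input that does not exist in the form you use it. The ``semi-rational block theorem'' you invoke --- covering all but $c_0T^{\epsilon}$ of $Z^{\sim}(\Q,T)$ by at most $c_0T^{\epsilon}$ positive-dimensional \emph{semi-algebraic} blocks $W\subseteq Z$ --- is false. Take $M=N=1$ and $Z$ the graph of the restricted exponential (any transcendental restricted-analytic function will do) over $[0,1]$: then $Z^{\sim}(\Q,T)$ has $\gg T$ elements with pairwise distinct $\pi_2$-images, while $Z$ contains no positive-dimensional semi-algebraic subset whatsoever, so no such covering can exist. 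This example also explains why the conclusion of the proposition is asymmetric: only $\pi_1\circ\delta$ is claimed to be semi-algebraic, while $\delta$ itself is merely definable. Your construction, which produces a semi-algebraic path inside a block $W\subseteq Z$ and hence a semi-algebraic arc inside $Z$, would prove something strictly stronger than Corollary 7.2, and that stronger statement is false in general.

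Because of this, the pigeonhole and path-connectedness steps never get off the ground. Even the genuine block theorem (Pila's, for points \emph{all} of whose coordinates are rational) does not apply to $Z^{\sim}(\Q,T)$, and its blocks are definable sets contained in semi-algebraic sets of the same dimension, not semi-algebraic subsets of $Z$; a path inside such a block, composed with $\pi_1$, has no reason to be semi-algebraic. The real content of Corollary 7.2 is exactly the semi-rational dichotomy you were asked to prove: Habegger and Pila obtain it by running the Pila--Wilkie machinery (parametrization plus induction on dimension) with rationality imposed only on the first $M$ coordinates, so that the positive-dimensional outcome is a definable arc whose $\pi_1$-part alone is semi-algebraic; it cannot be formally deduced from the rational-point block theorem by a pigeonhole argument. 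Your final step (cell decomposition to make $\pi_1\circ\delta$ real analytic on $(0,1)$ while keeping $\pi_2\circ\delta$ non-constant, then reparametrizing) is fine, but it sits downstream of the gap.
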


\section{Abelian integrals and periods} \label{abintper}

In this section we give some definitions and facts about abelian integrals and periods. These will be used to define the set we will apply Proposition \ref{HabPila} to. For more details we refer to the Appendix of \cite{BerPil} and Section 2 of \cite{CMZ}.

We remove from $\cC$ the singular points and the ramified points of $\pi_{|_\cC}$. We call $\widehat{\cC}$ what is left. Moreover, we set $K=\C(\cC)$.

Now, let $\bo{c}^* \in \widehat{\cC}$ and consider a small neighbourhood $ N_{\bo{c}^*}$ of $\bo{c}^*$ in $\widehat{\cC}$, mapping injectively to $S$ via $\pi$. Let $D_{\bo{c}^*}$ be a subset of $\pi( N_{\bo{c}^*})$ containing $\pi(\bo{c}^*)$ and analytically isomorphic to a closed disc.

Our simple abelian scheme $\cB\rightarrow S$ defines an analytic family $\cB^{an}$ of Lie groups over the Riemann surface $S^{an}$ and its relative Lie algebra $(Lie ~\cB)/S$ defines an analytic vector bundle $Lie~\cB^{an}$ over $S^{an}$, of rank $g$. Over $D_{\bo{c}^*}$ we have a local system of periods $\Pi_\cB$ of $\cB^{an}/D_{\bo{c}^*}$ given by the kernel of the exponential exact sequence
$$
0 \longrightarrow \Pi_\cB \longrightarrow Lie~\cB^{an} \xrightarrow{\exp_\cB}  \cB^{an} \longrightarrow 0
$$
over $S^{an}$.

Possibly restricting to a non-empty open subset of $S$, we fix a basis defined over $K$ of the $K$-vector space $Lie ~\cB$ and this gives us a trivialization $Lie~\cB^{an}/D_{\bo{c}^*}\simeq D_{\bo{c}^*}\times \C^g$.

Since $D_{\bo{c}^*}$ is simply connected, we can choose $2g$ holomorphic functions $\bo{\omega}_1, \dots, \bo{\omega}_{2g} :D_{\bo{c}^*} \rightarrow \C^{g} $ such that, for every $s \in D_{\bo{c}^*}$, we have that $\bo{\omega}_1 (s), \dots, \bo{\omega}_{2g}(s)$ is a basis for the period lattice $\Pi_{\cB_s}$. Moreover, our points $P_1, \dots , P_n$ correspond to regular sections $\tilde P_i: D_{\bo{c}^*} \rightarrow \cB^{an}$ for all $i=1, \ldots, n$ and we can define holomorphic functions
$\bo{z}_1, \dots, \bo{z}_n :D_{\bo{c}^*} \rightarrow \C^{g} $ such that $\exp_{\cB_{s}}(\bo{z}_i(s))=\tilde P_i(s)$ for all $s \in D_{\bo{c}^*}$ and $i=1, \dots ,n$.

The following Lemma is a consequence of work of Bertrand \cite{Bert11}.

\begin{lemma}\label{lemtransc}
Let $F=K(\bo{\omega}_1, \dots ,\bo{\omega}_{2g})$. Under the hypotheses of Theorem \ref{maintheorem} we have
$$
\textsl{tr.deg.}_F F (\bo{z}_1, \dots ,\bo{z}_n )=ng,
$$
on $D_{\bo{c}^*}$.
\end{lemma}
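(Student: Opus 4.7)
The plan is to obtain this lemma as a direct application of Bertrand's Lindemann--Weierstrass theorem for abelian varieties over function fields \cite{Bert11}. The upper bound $\textsl{tr.deg.}_F F(\bz_1, \dots, \bz_n) \le ng$ is essentially free: each $\bz_i$ is a holomorphic map $D_{\bo{c}^*} \to \C^g$, so in coordinates we are adjoining $ng$ functions to $F$. The content of the lemma is therefore the matching lower bound, i.e.\ the algebraic independence of the $ng$ scalar entries of $\bz_1, \dots, \bz_n$ over $F$.

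To produce the lower bound I would view the generic fiber $\cB_\eta$ as an abelian variety over $K=k(\cC)$, and check that it satisfies the hypotheses of Bertrand's theorem. Three ingredients are needed: first, $\cB$ is simple, which is case (4) of the previous section; second, $\cB$ is non-isotrivial, which is also part of case (4) and, combined with simplicity, is equivalent to $\cB_\eta$ having trivial $\overline{k}/K$-trace (the only abelian subvariety of $\cB_\eta$ defined over $\overline{k}$ is trivial, since otherwise $\cB_\eta$ would be isogenous to a constant abelian variety); third, and crucially, the points $P_1, \dots, P_n \in \cB_\eta(K)$ are $R$-linearly independent, which is exactly the hypothesis of Theorem \ref{maintheorem} together with our identification $R = \End(\cB_\eta)$. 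Bertrand's theorem then asserts that the coordinates of the chosen abelian logarithms $\bz_1, \dots, \bz_n$ (over any simply connected neighborhood on which a basis of periods is fixed) are algebraically independent over the field generated by $K$ and those periods, which is precisely $F$.

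The main obstacle, and really the only non-formal step, is matching the statement in \cite{Bert11} to our geometric setup. In particular I would need to verify that Bertrand's $\End$-linear independence hypothesis, usually stated for points on an abelian variety over a field, coincides with the $R$-independence hypothesis used here (which it does, once we pass to a finite cover so that all endomorphisms are defined over $K$, as is tacitly allowed after the base changes performed in Section \ref{proofschemes}). A small additional point is that Bertrand's statement concerns algebraic independence over the differential field of meromorphic functions in a neighborhood of a non-singular point of the base; since $F$ embeds into the meromorphic functions on $D_{\bo{c}^*}$ and algebraic relations propagate analytically, this yields the claimed equality on $D_{\bo{c}^*}$.
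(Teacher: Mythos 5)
Your proposal is correct and follows essentially the same route as the paper: the lemma is deduced directly from Bertrand's theorem in \cite{Bert11} (Theorem 4.1 there), checking that the non-isotrivial simple scheme has no constant part (trivial trace) and that the $R$-independence of $P_1,\dots,P_n$ gives the required non-degeneracy. The paper handles your final point about working over the base field $F=K(\bo{\omega}_1,\dots,\bo{\omega}_{2g})$ by invoking Theorem 4.3 of the same reference, which matches your remark about the period field.
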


\begin{proof}
This is a special case of Theorem 4.1 of \cite{Bert11} (see also \cite{Bertr09}) with $\overline{x}=(\bo{z}_1, \dots ,\bo{z}_n )$ and $\overline{y}=(\tilde P_1,\dots , \tilde P_n)$. Indeed, we have no constant part and $\overline{y}$ is non-degenerate since there is no relation among the $P_i$. Finally, we can choose $F$ as base field because of Theorem 4.3.
\end{proof}


\section{Points lying on rational linear varieties}

As before, we denote by $\widehat{\cC}$ the points of $\cC$ which are not singular nor ramified points of $\pi_{|_\cC}$.
Fix a $\bo{c}^* \in \widehat{\cC}$, and, as in the previous section, consider $ N_{\bo{c}^*}$, a small neighbourhood of $\bo{c}^*$, mapping injectively to $S$ via $\pi$. Moreover, let $D_{\bo{c}^*}$ be a subset of $\pi( N_{\bo{c}^*})$ containing $\pi(\bo{c}^*)$ and analytically isomorphic to a closed disc. For $i=1, \ldots, n$ let $\tilde P_i: D_{\bo{c}^*} \rightarrow \cB^{an}$ denote the regular sections corresponding to the points $P_i$. For the rest of this section we suppress the dependence on these data in the notation. Every constant will anyway depend on the choice of them. 

Let $\varphi_1, \dots ,\varphi_r$ be generators of $R$ as a $\Z$-module. Suppose that, for an $s_0 \in D$, there are $\rho_1, \dots, \rho_n$, not all zero, with
$$
\rho_1 \tilde P_1(s_0) +\dots + \rho_n \tilde P_n(s_0)=O
$$
on $\cB_{s_0}$, and 
$$
\rho_i = \sum_{j=1}^r a_{i,j} \varphi_j,
$$
for some integers $a_{i,j}$. Then we have
\begin{equation}\label{relD}
\sum_{i,j}a_{i,j}\varphi_j \tilde P_i(s_0) =O.
\end{equation}

For $T\geq 1$ we define
\begin{equation}\label{DefD(T)}
D(T)=\{ s_0 \in D: \eqref{relD} \text{ holds for some $a_{i,j}\in \Z^{rn}\setminus \{0\}$ and } |a_{i,j}|\leq T   \}.
\end{equation}

In this section and in the following ones we use Vinogradov's $\ll$ notation: for two real valued functions $f$ and $g$ we write $f\ll g$ if there exists a positive constant $c$ so that $f\leq c g$. At the beginning of each section we specify what these implied constants depend on. Any further dependence is denoted by an index.
Here, they depend on $\cC$, $D$, and the choice of generators of $R$.

\begin{proposition}\label{propest}
	Under the hypotheses of Theorem \ref{maintheorem}, for every $\epsilon >0$, we have $|D(T)|\ll_\epsilon T^\epsilon$, for every $T\geq 1$.
\end{proposition}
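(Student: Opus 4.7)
The plan is to apply the Pila--Zannier strategy, combining Proposition~\ref{HabPila} with the transcendence input of Lemma~\ref{lemtransc}. Identify $D$ analytically with a closed disc in $\R^2$ and, using the real basis $\bo{\omega}_1(\lambda),\dots,\bo{\omega}_{2g}(\lambda)$ of $\C^g$ over $\R$, define real analytic functions $x_{i,j,k}:D\to\R$ by $(\varphi_j)_*\bo{z}_i(\lambda)=\sum_k x_{i,j,k}(\lambda)\bo{\omega}_k(\lambda)$. Then set
\[
Z=\Big\{\bigl((a,b),\lambda\bigr)\in\R^{nr+2g}\times D : \sum_{i,j} a_{i,j}\,x_{i,j,k}(\lambda)=b_k\text{ for every }k,\ \max_{i,j}|a_{i,j}|\ge 1\Big\},
\]
which is definable in $\R_{\text{an}}$ since each $x_{i,j,k}$ is real analytic on the compact disc. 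Any $\lambda_0\in D(T)$ produces, via the integers realising \eqref{relD}, a point $((a,b),\lambda_0)\in Z^{\sim}(\Q,CT)$ with $C=1+\max_{i,j,k}\sup_D|x_{i,j,k}|$; the side condition $\max|a_{i,j}|\ge 1$ removes the trivial sheet $\{(0,0)\}\times D$. Hence $|D(T)|\le|\pi_2(Z^{\sim}(\Q,CT))|$.

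If the claimed bound failed, then for some $\epsilon>0$ and arbitrarily large $T$ we would have $|\pi_2(Z^{\sim}(\Q,CT))|>c(Z,\epsilon)(CT)^\epsilon$, and Proposition~\ref{HabPila} would supply a continuous definable arc $\delta(t)=((a(t),b(t)),\lambda(t))$ in $Z$ with $\pi_1\circ\delta$ semialgebraic and real analytic on $(0,1)$ and $\lambda(t)$ non-constant. Restricting to a sub-interval and using piecewise analyticity of definable functions, I may assume that $a(t),b(t)$ are algebraic real analytic functions of $t$ and $\lambda(t)$ is a non-constant real analytic function, subject to the functional identity
\[
\sum_{i,j} a_{i,j}(t)\,(\varphi_j)_*\bo{z}_i(\lambda(t))=\sum_k b_k(t)\,\bo{\omega}_k(\lambda(t))
\]
valid for every $t$.

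To close, I would analytically continue to a complex neighborhood of $(0,1)$ and work in the field $\mathcal{M}$ of meromorphic functions there. Since $\lambda$ is non-constant, pullback by $\lambda$ is injective on meromorphic function fields by the identity theorem, so by Lemma~\ref{lemtransc} the $ng$ functions $\lambda^* z_{i,l}$ remain algebraically independent over $\lambda^*F$. Reading the displayed identity coordinate-wise and noting that the matrices $M_j(\lambda)$ representing $\varphi_j$ on $\mathrm{Lie}(\cB)$ have entries algebraic over $K\subseteq F$, it becomes a linear relation among the $\lambda^* z_{i,l}$ with coefficients in an algebraic extension of $\overline{\C(t)}\cdot\lambda^*F$; non-triviality comes from the faithfulness of the $R$-action on the Lie algebra together with $\max|a_{i,j}(t)|\ge 1$. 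This contradicts Bertrand's theorem.

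The most delicate step is the final one: one must pin down the field to which the coefficients of the identity belong after substituting $\lambda=\lambda(t)$, ensure that the pullback by $\lambda$ preserves the transcendence degree from Lemma~\ref{lemtransc}, and verify that the resulting linear relation survives as non-trivial. The chief subtlety is the possible transcendence of $\lambda(t)$ over $\C(t)$, which enlarges the coefficient field to $\overline{\C(t)}\cdot\lambda^*F$, and one has to confirm that the algebraic independence of the $\lambda^* z_{i,l}$ persists over this compositum.
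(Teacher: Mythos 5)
Your setup coincides with the paper's: Betti coordinates $u^h_{i,j}$, a definable incidence set, Proposition \ref{HabPila} applied with the coefficients as the rational coordinates, and Bertrand's theorem (Lemma \ref{lemtransc}) as the transcendence input. Parametrising the incidence set by $\lambda\in D$ instead of by the image of the Betti map is a harmless variant (it even lets you skip the paper's fibre-counting step via Proposition \ref{unifbound} and Lemma \ref{lemunibound}), and your bounding of the $b_k$ by compactness of $D$ is exactly the paper's argument.

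The final step, however, has a genuine gap, and it is the one you flag yourself. From the arc you obtain the identity $\sum_{i,j}a_{i,j}(t)A_j\bo{z}_i(\lambda(t))=\sum_k b_k(t)\per_k(\lambda(t))$, i.e.\ a relation among the $\lambda^*z_{i,l}$ with coefficients in $\overline{\C(t)}\cdot\lambda^*F$. Algebraic independence of the $\lambda^*z_{i,l}$ over $\lambda^*F$ does \emph{not} persist over this compositum, and it cannot be ``confirmed'': adjoining $t$ (equivalently, the one-parameter family of coefficients, which has transcendence degree at most $1$ over $\C$) may lower the transcendence degree of the $z$'s by one, and a single non-trivial linear relation — which is all that faithfulness of the $R$-action on the Lie algebra gives you (some matrix $\sum_j a_{i,j}(t)A_j$ is non-zero, i.e.\ rank at least $1$) — lowers it by exactly one. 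So the inequality you reach is $\textsl{tr.deg.}_{\lambda^*F}\le ng-1+1=ng$, which is no contradiction with Lemma \ref{lemtransc}. Note that your argument never uses the hypothesis $g\ge 2$, whereas the statement is proved in the prior works by different methods when $g=1$; this is the sign that the decisive point is missing. The paper closes this by a count: the coefficient functions satisfy $nr+2g-1$ independent algebraic relations over $\C$ (they trace a semialgebraic curve), and the vector identity contributes $g$ further relations over $F=\C(\per_1,\dots,\per_{2g})$, so that $\textsl{tr.deg.}_F F(\bo{z}_1,\dots,\bo{z}_n)\le ng-g+1<ng$ precisely because $g>1$. To repair your proof you must extract more than one independent relation from the $g$ coordinate equations (i.e.\ control the rank of the $g\times ng$ matrix $\bigl(\sum_j a_{1,j}(t)A_j\,|\,\cdots\,|\,\sum_j a_{n,j}(t)A_j\bigr)$, using also the period block as in the paper) and then invoke $g\ge 2$ to beat the $+1$ coming from $t$; without this the contradiction with Bertrand's theorem does not follow.
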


To prove this Proposition we need some preliminary lemmas. First of all, note that each endomorphism $\varphi_j$, for $j=1, \ldots, r$, is represented by a square matrix $A_j $ of dimension $g$, i.e., if for every $i=1, \ldots, n$ we set $\bw_{i,j}=A_j \bz_i$, then $\exp_{\cB_{s_0}}(\bw_{i,j}(s_0) )=\varphi_j \tilde P_i(s_0)$, for all $s_0 \in D$. We need to know more about the entries of these matrices.

\begin{lemma}\label{lementrM}
Any matrix $M$ associated to an endomorphism of $\cB_{\eta}$ has entries in $K$.
\end{lemma}

\begin{proof}
Recall that we supposed that the endomorphisms of $\cB_{\eta}$ are defined over $K$. The matrix $M$ associated to an endomorphism is the matrix representation of its differential at $O$. Then, as we have chosen a $K$-basis of $Lie~\mc{B}$ the entries of this matrix must lie in $K$.
\end{proof}

Note that \eqref{relD} implies that
$$
\sum_{i,j}a_{i,j} \bw_{i,j}(s_0) \in \Z \per_1(s_0) + \dots + \Z \per_{2g}(s_0).
$$

Recall that $D$ is a subset of $S(\C)$ analytically isomorphic to a closed disc. We now identify it with a closed disc in $\R^2$. On a small neighborhood $I$ of $D$ we can define $2g\cdot nr$ real analytic functions $u_{i,j}^{h}$ by the equations
$$
\bw_{i,j}=\sum_{h=1}^{2g} u_{i,j}^{h} \per_h,
$$
and their complex conjugates
$$
\overline{\bw}_{i,j}=\sum_{h=1}^{2g} u_{i,j}^{h} \overline{\per}_h.
$$
Therefore the $u_{i,j}^{h}$ are real-valued. They are sometimes called Betti coordinates of an abelian logarithm.

We then define the function 
\begin{align*}
\Gamma :   I \subseteq \R^2 & \rightarrow  (\R^{nr})^{2g}\\
s & \mapsto   (u_{i,j}^{h}(s)).
\end{align*}
This is a real analytic function and $Z=\Gamma (D)$ is a subanalytic set in $(\R^{nr})^{2g}$, therefore definable in the o-minimal structure $\R_{\text{an}}$.

Now, if $s_0 \in D(T)$, there exist $2g$ integers $b_1, \dots, b_{2g}$ with
\begin{equation}\label{relD1}
\sum_{i,j}a_{i,j} \bw_{i,j}(s_0) = b_1 \per_1(s_0) + \dots + b_{2g} \per_{2g}(s_0).
\end{equation}

Since the $\per_h(s_0)$ are $\R$-linearly independent, we have
\begin{equation}\label{linrel}
\sum_{i,j}a_{i,j }u_{i,j}^{h}(s_0) =b_h, \quad \text{for $h=1, \dots , 2g$.}
\end{equation}

We now consider $(u_{i,j}^{h})$ as real coordinates on $Z$.

Now, for $T\geq 1$ we define
$$
Z(T)=\{ (u_{i,j}^{h})  \in Z: \eqref{linrel}  \text{ for some $(a_{i,j},b_h) \in \Z^{rn+2g} \setminus \{0\}$ with } |a_{i,j}|, |b_h|\leq T   \}.
$$

\begin{lemma}\label{lemunibound}
	For every choice of $a_{i,j },b_h \in \R$, not all zero, the subset of $Z$ for which \eqref{linrel} holds is finite.
\end{lemma}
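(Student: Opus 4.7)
The plan is to repackage the $2g$ real scalar equations comprising \eqref{linrel} as the vanishing of a single holomorphic $\C^g$-valued function on $I$, and then use Lemma \ref{lemtransc} to prevent that function from vanishing identically; since the subset of $Z$ in question is the image under $\Theta$ of $\{\lambda\in D:\text{\eqref{linrel} holds}\}$, it suffices to prove that this latter set is finite.

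First I would observe that at any $\lambda\in I$, the conditions \eqref{linrel} are equivalent to
$$G(\lambda) := \sum_{i,j} a_{i,j}\,\bw_{i,j}(\lambda) - \sum_{h} b_h\,\per_h(\lambda) = 0 \quad \text{in } \C^g.$$
Indeed, substituting $\bw_{i,j}=\sum_h u_{i,j}^h \per_h$ and using the pointwise $\R$-linear independence of $\per_1(\lambda),\ldots,\per_{2g}(\lambda)$ collapses the $2g$ scalar conditions to this single vector identity. Since $G$ is holomorphic on the connected open neighbourhood $I\subset\C$ of the compact disc $D$, an accumulation point of its zero locus inside $D$ would, by the identity principle in complex dimension one, force $G\equiv 0$ on $I$; combined with compactness of $D$, this means I only need to rule out the possibility $G\equiv 0$ whenever $(a_{i,j},b_h)\neq 0$.

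Assume then, for contradiction, that $G\equiv 0$. Writing $\bw_{i,j}=A_j\bz_i$, with $A_j$ the analytic matrix of $\varphi_j$ (whose entries lie in $F$ by the first lemma of this section), the identity rearranges to
$$\sum_{i=1}^n B_i\,\bz_i = \sum_{h=1}^{2g} b_h\,\per_h, \qquad B_i := \sum_{j=1}^r a_{i,j}\,A_j.$$
I would then split into two cases. If every $B_i$ is the zero matrix, the displayed identity reduces to $\sum_h b_h \per_h \equiv 0$, and pointwise $\R$-linear independence of the periods gives $b_h=0$ for every $h$; simultaneously, the faithfulness of the analytic representation $R\otimes\R\hookrightarrow \text{Mat}_g(\C)$, together with $\varphi_1,\ldots,\varphi_r$ being a $\Z$-basis---hence an $\R$-basis---of $R\otimes\R$, turns each relation $\sum_j a_{i,j}A_j=0$ into $a_{i,j}=0$, contradicting $(a,b)\neq 0$. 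Otherwise some entry $(B_{i_0})_{l_0,k_0}$ is nonzero, and the $l_0$-th coordinate of the displayed identity reads
$$\sum_{i,k}(B_i)_{l_0,k}\,z_i^{(k)} = \Bigl(\sum_h b_h\,\per_h\Bigr)_{l_0},$$
a nontrivial linear relation among the $ng$ coordinates of $\bz_1,\ldots,\bz_n$, which should contradict the algebraic independence of those coordinates over $F$ asserted by Lemma \ref{lemtransc}.

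The main difficulty I anticipate is precisely this last step: since $a_{i,j}$ and $b_h$ are merely real, the coefficients $(B_i)_{l,k}$ and the right-hand side of the final displayed identity lie a priori in the $\R$-algebra generated by $F$, rather than in $F$ itself, so the linear dependence just produced is not literally a dependence over $F$. To convert it into an honest violation of Lemma \ref{lemtransc}, one has to exploit the fact that the algebraic independence of the $\bz_i$ persists after adjoining real constants to $F$---using the same flexibility in the choice of base field that the proof of Lemma \ref{lemtransc} already invokes via Theorem 4.3 of \cite{Bert11}---so that the relation above becomes a genuine contradiction.
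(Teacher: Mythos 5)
Your argument is correct and is essentially the paper's own proof: the authors likewise pass from an infinite set of solutions of \eqref{linrel} to an accumulation point in $D$, invoke the identity principle to conclude that $\sum_{i,j}a_{i,j}A_j\bz_i=b_1\per_1+\dots+b_{2g}\per_{2g}$ holds identically on $D$, and then contradict Lemma \ref{lemtransc}. The two refinements you add --- treating separately the degenerate case $\sum_j a_{i,j}A_j=0$ for all $i$ (where the contradiction comes from the $\R$-independence of the periods and the faithfulness of the analytic representation, with $\varphi_1,\dots,\varphi_r$ read as a $\Z$-basis of $R$), and the observation that real coefficients force one to work over $F$ enlarged by constants --- are points the paper passes over silently, and your proposed resolution via the flexibility in the choice of base field in Bertrand's theorem is exactly the intended justification (indeed, in the proof of Lemma \ref{lemeps} the paper already works over $F=\C(\per_1,\dots,\per_{2g})$).
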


\begin{proof}
	By contradiction suppose that the subset of $Z$ of points satisfying \eqref{linrel} for some choice of coefficients is infinite. This 
	would imply that there exists an infinite set $D'\subseteq D$ on which for every $s_0 \in D'$,
	$$
	\sum_{i,j}a_{i,j} A_j \bz_{i}(s_0)= b_1 \per_1(s_0) + \dots + b_{2g} \per_{2g}(s_0).
	$$
	Since this relation holds on a set with an accumulation point, it must hold on the whole $D$ (see Ch. III, Theorem 1.2 (ii) of \cite{LangCompl}), contradicting Lemma \ref{lemtransc}.
\end{proof}

\begin{lemma}\label{lemeps}
	Under the above hypotheses, for every $\epsilon >0$, we have $|Z(T)|\ll_\epsilon T^\epsilon$, for every $T\ge 1$.
\end{lemma}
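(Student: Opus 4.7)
My plan is to apply the Habegger--Pila refinement of Pila--Wilkie (Proposition \ref{HabPila}) to a suitable definable enlargement of $Z$, then rule out the exceptional arc by combining the finiteness Lemma \ref{lemunibound} and the transcendence Lemma \ref{lemtransc}. Concretely, set $M=rn+2g$ and consider
$$
W := \lg \bigl((a_{i,j},b_h),(v_{i',j'}^{h'})\bigr) \in \R^{M} \times Z : (a,b) \neq 0 \tx{ and } \sum_{i,j} a_{i,j} v_{i,j}^h = b_h \tx{ for every } h \rg,
$$
which is definable in $\R_{\tx{an}}$ because $Z$ is subanalytic and the remaining conditions are linear. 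By the definition of $Z(T)$ via \eqref{linrel}, every element of $Z(T)$ is the $\pi_2$-image of a point in $W^{\sim}(\Q,T)$, so it suffices to bound $|\pi_2(W^{\sim}(\Q,T))|$ by $c_\epsilon T^\epsilon$.

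Suppose, for contradiction, that this bound fails for some fixed $\epsilon>0$. Proposition \ref{HabPila} then produces a continuous definable curve $\delta:[0,1]\to W$ such that $\pi_1\circ\delta$ is semi-algebraic and real-analytic on $(0,1)$, while $\pi_2\circ\delta$ is non-constant. Write $\delta(t)=((a(t),b(t)),u(t))$ with $u(t)=\Theta(\lambda(t))$ for a continuous $\lambda(t)\in D$. Shrinking to a subinterval and using o-minimal cell decomposition, I may assume $\delta$ is real-analytic, that $\Theta$ is a local embedding near some interior point $\lambda_0$, and that $\lambda$ is non-constant; the defining incidence of $W$ then forces the identity
$$
\sum_{i,j} a_{i,j}(t)\,A_j \bz_i(\lambda(t)) \;=\; \sum_h b_h(t)\,\per_h(\lambda(t))
$$
of real-analytic functions of $t$, where the $A_j$ have entries in $\C(\per_1,\dots,\per_{2g})$.

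If $\pi_1\circ\delta$ is constant, equal say to $(a_0,b_0)\neq 0$, then the arc $\{u(t)\}$ traces infinitely many distinct points of $Z$ all satisfying the single relation \eqref{linrel} with coefficients $(a_0,b_0)$, contradicting Lemma \ref{lemunibound}.

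The remaining case, in which $\pi_1\circ\delta$ is non-constant, is the main obstacle; here I would invoke Lemma \ref{lemtransc}. By analytically continuing $\lambda(t)$ to a complex neighborhood of the interior point and inverting via local injectivity, one views $t$ as a holomorphic function of $\lambda$ on a small complex disc in $D$; the semi-algebraic coordinates $a_{i,j}(t),b_h(t)$ thereby extend to meromorphic functions of $\lambda$ that are algebraic over $\C(t(\lambda))$. Matching components of the analytic identity in a $\C(\per_1,\dots,\per_{2g})$-basis and clearing denominators yields an algebraic relation among the $ng$ components of $\bz_1,\dots,\bz_n$ over a finite extension of $F=K(\per_1,\dots,\per_{2g})$. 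The main technical point will be to show that this relation remains non-trivial after descending from the extension back to $F$, i.e.\ that the auxiliary function $t(\lambda)$ cannot introduce cancellations that trivialize the dependence; once this is verified, the resulting algebraic dependence contradicts Lemma \ref{lemtransc}, completing the proof.
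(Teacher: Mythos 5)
Your setup (the definable family $W$, the application of Proposition \ref{HabPila}, and the aim of contradicting Lemma \ref{lemtransc}) coincides with the paper's, and your constant-coefficient case via Lemma \ref{lemunibound} is fine. But the case you yourself flag as ``the main obstacle'' is precisely where the content of the lemma lies, and your intended route does not close it. The coefficient functions $a_{i,j}(t),b_h(t)$ along the arc are semi-algebraic, so all you know is that they lie on an algebraic curve, i.e.\ that the field they generate has transcendence degree at most $1$ over $\C$; they are \emph{not} algebraic over $F=K(\per_1,\dots,\per_{2g})$, nor over $\C$, and neither is your auxiliary function $t(\lambda)$. So eliminating them from the identity $\sum_{i,j}a_{i,j}A_j\bz_i=\sum_h b_h\per_h$ yields relations over an extension of $F$ of transcendence degree up to $1$, not over ``a finite extension of $F$''; the descent problem you postpone is therefore not the real issue (algebraic extensions do not change transcendence degree), and the non-triviality you hope to verify cannot be established in the generality you state.

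The decisive point, which your argument never uses, is the hypothesis $g\geq 2$. The paper's proof is a transcendence-degree count: the $rn+2g$ functions $a_{i,j},b_h$ satisfy $rn+2g-1$ independent algebraic relations over $\C$ (they parametrize a curve), and the vector identity contributes $g$ further relations over $F$ involving the $\bz_i$; hence the $rn+2g+ng$ functions $a_{i,j},b_h,\bz_i$ have transcendence degree at most $ng-g+1$ over $F$, so $\textsl{tr.deg.}_F F(\bz_1,\dots,\bz_n)\leq ng-g+1<ng$ exactly because $g>1$, contradicting Lemma \ref{lemtransc}. In other words, one transcendence degree is inevitably lost to the unknown coefficient functions, and the contradiction survives only because the $g\geq 2$ linear conditions cut down by more than that. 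For $g=1$ the same count gives $ng$, no contradiction, consistent with the fact that the statement in this form fails for elliptic schemes; any correct proof must invoke $g\geq 2$, and since yours does not, the gap is genuine rather than merely technical.
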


\begin{proof}
	Define
	$$
	W=\lg  \left(\alpha_{i,j},\beta_h , u_{i,j}^{h}\right)  \in  (\R^{rn} \setminus \{0\})\times \R^{2g}\times Z: \sum_{i,j}\alpha_{i,j }u_{i,j}^{h} =\beta_h, \quad \text{for $h=1, \dots , 2g$} \rg,
	$$
	and recall \eqref{def}.
We denote by $\pi_1$ the projection on the first $rn+2g$ coordinates and by $\pi_2$ the projection onto $Z$. Then, we have $Z(T)\subseteq \pi_2(W^{\sim}(\Q,T))$ and therefore $|Z(T)| \leq |\pi_2(W^{\sim}(\Q,T))|$. We claim that $|\pi_2(W^{\sim}(\Q,T))|\ll_\epsilon T^\epsilon$. Suppose not; then by Proposition \ref{HabPila} there exists a continuous definable $\delta:[0,1] \rightarrow W$ such that $\delta_1:=\pi_1 \circ \delta : [0,1] \rightarrow \R^{rn+2g}$ is semi-algebraic and the composition $\delta_2:=\pi_2 \circ \delta : [0,1] \rightarrow Z$ is non-constant. Therefore, there is a connected infinite subset $E\subseteq [0,1]$ such that $\delta_1(E)$ is a segment of an algebraic curve and $\delta_2(E)$ has positive dimension. Let $D'$ be an infinite connected subset of $D$ with $ \Gamma (D')\subseteq \delta_2(E)$. The coordinate functions $\alpha_{i,j},\beta_h$ on $D'$ satisfy $rn+2g -1$ independent algebraic relations with coefficients in $\C$. Moreover, we have the relations given by
	$$
	\sum_{i,j}\alpha_{i,j} \bw_{i,j}= \beta_1 \per_1 + \dots + \beta_{2g} \per_{2g},
	$$
	which, as $\bw_{i,j}=A_j \bz_{i}$, translate to 
	$$
	\sum_{i,j}\alpha_{i,j} A_j \bz_{i}= \beta_1 \per_1 + \dots + \beta_{2g} \per_{2g}.
	$$
	Thus, recalling that the $\alpha_{i,j }$ cannot all be 0 and that the $A_j$ have entries in $K$ by Lemma \ref{lementrM}, we have $g$ algebraic relations among the $\alpha_{i,j},\beta_h,\bz_i$ over $F=K(\per_1, \dots , \per_{2g})$.
	
	Then, on $D'$, and therefore on the whole $D$, the $nr+2g+ng$ functions $\alpha_{i,j},\beta_h,\bz_i$ satisfy $nr+2g-1+g$ independent algebraic relations over $F$. Thus, since by assumption $g>1$, we have
	$$
	\textsl{tr.deg.}_F F (\bo{z}_1, \dots ,\bo{z}_n )\leq ng-g+1 <ng,
	$$
	contradicting Lemma \ref{lemtransc}, and proving the claim.
\end{proof}

\begin{proof}[Proof of Proposition \ref{propest}]
	Since the $\per_h$ are $\R$-linearly independent, if $s_0 \in D$ satisfies \eqref{relD}, then the equations \eqref{linrel} hold for $\Gamma (s_0)$ for some integers $b_1, \dots , b_{2g}$. Now, since $D$ is a compact subset of $\R^2$, each $\bz_i(D)$ is bounded and therefore, if $\bz_{1}(s_0), \dots ,\bz_{n}(s_0),$ 
	$\per_1(s_0),\dots ,\per_{2g}(s_0)$ satisfy \eqref{relD1}, then $|b_1|,\dots , |b_{2g}|$ are also bounded in terms of the $|a_{i,j}|$ and thus of $T$. Therefore, $\Gamma (s_0) \in Z(\gamma_1 T)$ for some $\gamma_1$ independent of $T$.
	Now, using Proposition \ref{unifbound}, Lemma \ref{lemunibound} and the fact that $\Gamma $ is a definable function, we see that there exists a $\gamma_2$ such that, for any choice of $a_{i,j}$ and $ b_h$, there are at most 
	$\gamma_2$ elements $s_0$ in $D$ such that $\bz_{1}(s_0), \dots ,\bz_{n}(s_0)$, $\per_1(s_0),\dots,\per_{2g}(s_0)$ satisfy \eqref{relD1}. Thus $|D(T)|\ll |Z(\gamma_1 T)|$ and the claim of Proposition \ref{propest} follows from Lemma \ref{lemeps}.
\end{proof}


\section{Relations on a fixed abelian variety} \label{fixedrel}


Let $G$ be an abelian variety of dimension $g$ defined over a number field $L$ and let $T(G)$ be its tangent space at the origin. The N\'eron-Severi group NS$(G)$ of $G$ can be identified with the group of Riemann forms on $T(G)\times T(G)$. The degree $\deg H$ of an element $H$ of $\textrm{NS}(G)$ is defined to be the determinant of the imaginary part Im$(H)$ of $H$ on the period lattice $\Lambda(G)$ of $G$. Suppose we are given an ample symmetric divisor $\D$ on $G$ with corresponding Riemann form $H_\D$ on $T(G)\times T(G)$ of some degree $l$.
In this section, the constants $\gamma_1, \gamma_2, \dots $ and the ones implied in the $\ll$ notation depend on $l$ and $g$. 

We indicate by $h_F(G)$ the stable Faltings height of $G$ taken with respect to a sufficiently large field extension of $L$ so that $G$ has at least semistable reduction everywhere.
Moreover, the divisor $\D$ induces a N\'eron-Tate height $\widehat{h}_\D$ on $G$.

Suppose $Q_1,\dots ,Q_m \in G(L)$ are $m$ points dependent over $\Z$, with $\widehat{h}_{\mc{D}}(Q_i)\leq q$ for some $q\geq 1$. Define
$$
\mathcal{L}(Q_1, \dots , Q_m)=\{ (a_1, \dots , a_m) \in \Z^m: a_1Q_1+\dots +a_mQ_m= O \}.
$$
This is a sublattice of $\Z^m$ of some positive rank $r$. We want to show that $\mathcal{L}(P_1, \dots , P_m)$ has a set of generators with small max norm 
$|\bo{a}|=\max \{|a_1|, \dots ,|a_m|\}$. 

\begin{proposition}\label{lemgeneratorstau}
Under the above hypotheses, there are generators $\bo{a}_1, \dots , \bo{a}_r$ of the lattice $\mathcal{L}(Q_1, \dots , Q_m)$ with
$$
|\bo{a}_i|\ll  \kappa^{\gamma_1} q^{\frac{1}{2}(m-1)} (h_F(G)+\gamma_2)^{\gamma_3},
$$
for some positive $\gamma_1, \gamma_2,\gamma_3$, where $\kappa=[L:\Q]$.
\end{proposition}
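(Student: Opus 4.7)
The plan is a geometry-of-numbers argument applied to the rank-$r$ integer lattice $\mathcal{L}=\mathcal{L}(Q_1,\dots,Q_m)\subseteq\mathbb{Z}^m$, equipped with the max norm $|\cdot|$. Let $\lambda_1\leq\cdots\leq\lambda_r$ denote its successive minima. Mahler's theorem on successive minima produces a basis $\mathbf{a}_1,\dots,\mathbf{a}_r$ of $\mathcal{L}$ with $|\mathbf{a}_i|\ll\lambda_i$, where the implied constant depends only on $r\leq m$. Since $\mathcal{L}\subseteq\mathbb{Z}^m$, every non-zero integer vector has max norm at least $1$, so $\lambda_1\geq 1$; combined with Minkowski's second theorem $\lambda_1\cdots\lambda_r\leq\mathrm{covol}(\mathcal{L})$ (for the max norm the unit ball has volume $2^r$, so no combinatorial factor appears), this yields $\lambda_r\leq\lambda_1\cdots\lambda_r\leq\mathrm{covol}(\mathcal{L})$. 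The proposition therefore reduces to an appropriate upper bound on $\mathrm{covol}(\mathcal{L})$.

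To compute the covolume, write $\Gamma=\sum_i\mathbb{Z}Q_i\subseteq G(L)$ with torsion subgroup $T$, so that $\Gamma/T$ is free of rank $s=m-r$ carrying the positive definite N\'eron--Tate form $\widehat{h}_{\mathcal{D}}$. A Smith-normal-form computation applied to the short exact sequence $0\to\mathcal{L}\to\mathbb{Z}^m\to\Gamma\to 0$ yields the identity
\[
\mathrm{covol}(\mathcal{L}) \;=\; \#T\cdot\frac{\sqrt{\det\nolimits' M}}{\mathrm{covol}(\Gamma/T,\,\widehat{h}_{\mathcal{D}})},
\]
where $M=\bigl(\langle Q_i,Q_j\rangle_{\mathcal{D}}\bigr)_{i,j=1}^m$ is the N\'eron--Tate Gram matrix of the $Q_i$'s and $\det' M$ is the product of its $s$ non-zero eigenvalues, equivalently the determinant of any non-singular $s\times s$ principal minor. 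Cauchy--Schwarz gives $|M_{ij}|\leq q$, and Hadamard's inequality on such a minor yields $\det' M\leq q^s\leq q^{m-1}$, supplying the $q^{(m-1)/2}$ factor in the statement.

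The remaining two factors are where the arithmetic invariants $\kappa$ and $h_F(G)$ enter. For $\mathrm{covol}(\Gamma/T,\widehat{h}_{\mathcal{D}})$ I would invoke David's Lehmer-type lower bound for the N\'eron--Tate height of non-torsion $L$-rational points (refined by Pazuki for abelian varieties in families): $\widehat{h}_{\mathcal{D}}(P)\gg\kappa^{-\alpha_1}(h_F(G)+\beta_1)^{-\gamma_1}$ for every non-torsion $P\in G(L)$; a Minkowski-type lower bound on the covolume of a torsion-free lattice in terms of its first successive minimum then produces a matching lower bound for $\mathrm{covol}(\Gamma/T,\widehat{h}_{\mathcal{D}})$. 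For $\#T$, the Masser--W\"ustholz isogeny estimate (equivalently Pazuki's reformulation) bounds the order of any $L$-rational torsion point of $G$ by $\kappa^{\alpha_2}(h_F(G)+\beta_2)^{\gamma_2}$, and a standard structural argument promotes this to the same polynomial bound for $\#T$. Assembling the three estimates produces $\mathrm{covol}(\mathcal{L})\ll\kappa^{\gamma_1}(h_F(G)+\gamma_2)^{\gamma_3}\,q^{(m-1)/2}$.

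The two deep inputs — David's height lower bound and the Masser--W\"ustholz/Pazuki torsion estimate — are the main obstacle and the heart of the argument; everything else (Mahler's basis theorem, Minkowski's second theorem, the covolume identity, Cauchy--Schwarz, and Hadamard) is routine geometry of numbers. The one technical subtlety that deserves care is the precise form of the covolume identity, which I would derive via Smith normal form applied to the surjection $\mathbb{Z}^m\to\Gamma/T$, or equivalently via the orthogonality between $\mathcal{L}\otimes\mathbb{R}$ in $\mathbb{R}^m$ and the row span of the coordinate matrix of the $Q_i$'s in a chosen basis of $\Gamma/T$.
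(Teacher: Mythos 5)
Your argument is correct in substance, but it takes a genuinely different route from the paper for the lattice-theoretic half of the statement. The paper disposes of that half with a single citation: Theorem A of \cite{Masser88} directly produces generators of $\mathcal{L}(Q_1,\dots,Q_m)$ of max norm at most $m^{m-1}\omega\left(q/\eta\right)^{\frac{1}{2}(m-1)}$, where $\omega=\#G(L)_{\mathrm{tor}}$ and $\eta$ is the minimal N\'eron--Tate height of a non-torsion point of $G(L)$ (the all-torsion case being handled separately), and then feeds in Lemma \ref{lemheighttor} to control $\omega$ and $\eta$. You instead re-prove (a version of) Masser's theorem by hand: Mahler plus Minkowski's second theorem reduce the problem to bounding $\mathrm{covol}(\mathcal{L})$, and your covolume identity is correct -- writing $A$ for the coordinate matrix of the $Q_i$ in a basis of $\Gamma/T$ and $B$ for the N\'eron--Tate Gram matrix of that basis, one has $\det'(A^{T}BA)=\det(AA^{T})\det B$, which gives $\mathrm{covol}(\mathcal{L})=\#T\,\sqrt{\det' M}/\mathrm{covol}(\Gamma/T,\widehat{h}_{\mathcal{D}})$ -- after which Hadamard/Fischer, Minkowski's first theorem and the torsion count recover exactly the shape of Masser's bound. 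The arithmetic inputs are then the same as the paper's: a height gap for non-torsion $L$-points and a bound on orders of $L$-rational torsion points, i.e.\ precisely Lemma \ref{lemheighttor}; note these are not quite off-the-shelf in the clean polynomial form you quote (the paper derives them from David's dichotomy together with Masser--W\"ustholz and Pazuki by an induction on the dimension), but since that lemma is part of ``the above'', treating them as known is consistent. Two small slips, neither fatal: the product of the non-zero eigenvalues of the rank-$s$ matrix $M$ equals the \emph{sum} of all $s\times s$ principal minors, not any single non-singular one, so your Hadamard step yields $\det' M\ll_m q^{s}$ only up to a binomial factor (harmlessly absorbed); and the max-norm unit ball intersected with the $r$-dimensional span of $\mathcal{L}$ does not have volume exactly $2^{r}$ -- it is at least $2^{r}$ by Vaaler's cube-slicing theorem, which is the inequality you actually need, or one can pass to the Euclidean norm at the cost of a constant depending on $m$. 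What your route buys is a self-contained geometry-of-numbers proof making the roles of $\omega$, $\eta$ and $q$ explicit; what the paper's route buys is brevity, since Masser's Theorem A packages the entire lattice argument in one stroke.
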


For the case $g=1$, this is Lemma 6.1 of \cite{BC2016}.

We need a few auxiliary results in order to prove the above proposition. 
Some of the arguments we are going to use, in particular in the proof of Lemma \ref{lemheighttor}, were suggested to us by David Masser. \\

We first need to see how to associate to $G$ a principally polarized abelian variety.
Let $S_g$ be the Siegel space of $g\times g$ symmetric matrices of positive definite imaginary part. Let $\tau \in S_g$ and set $\Lambda=\Z^g + \Z^g \tau$. The analytic space $\C^g/\Lambda$ embeds in some projective space $\mathbb{P}^N$ via a function $z\mapsto \Theta_\tau(z)$ whose coordinates are given in (1), p.~510 of \cite{David93}.
The image of such function is an abelian variety $A(\tau)$ which is principally polarized with associated Riemann form $H_\tau$ defined by $\tau$. In case $A(\tau)$ is defined over a number field we indicate by $\widehat{h}_{H_\tau}$ the N\'eron-Tate height relative to $H_\tau$ or, more precisely, to the ample symmetric divisor $\mathcal{D}_{H_\tau}$ associated to $H_\tau$.

Moreover, if $\tau$ is such that $\Theta_\tau(0) \in \mathbb{P}^N(\Qbar)$ we indicate by $h_\Theta(A(\tau))$ the Weil height of the point $\Theta_\tau(0)$ and call it the Theta-height of $A(\tau)$. The following lemma is a consequence of Corollary 1.3 of \cite{Pazu}.

\begin{lemma}\label{lemheight} Suppose $A(\tau)$ is defined over a number field.
There are positive constants $\gamma_4$, $\gamma_5$, $\gamma_6$, $\gamma_7$ and $\gamma_8$, depending only on $g$, such that
$$
h_\Theta(A(\tau))\leq \gamma_4 (h_F(A(\tau))+\gamma_5)^{\gamma_6}
$$
and
$$
h_F(A(\tau))\leq \gamma_7 (h_\Theta(A(\tau))+1)^{\gamma_8}.
$$
\end{lemma}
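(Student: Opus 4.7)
The plan is to invoke Corollary 1.3 of \cite{Pazu} as a black box and then perform a short bookkeeping argument to convert the inequalities there into the polynomial shape stated in the lemma. Pazuki's Corollary 1.3 provides a sharp two-sided comparison between the Theta height $h_\Theta(A(\tau))$ and the stable Faltings height $h_F(A(\tau))$ of the principally polarized abelian variety $A(\tau)$: up to an additive error which is logarithmic in the heights and with constants depending only on $g$, the two heights agree up to an explicit positive multiplicative constant. (The normalization used in \cite{David93} for the theta coordinates $\Theta_\tau$ is the same one that Pazuki uses, so no compatibility issue arises.)

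Concretely, Pazuki's bound reads, for constants $c_1, c_2$ depending only on $g$,
\[
\bigl|\,h_F(A(\tau)) - \tfrac{1}{2}h_\Theta(A(\tau))\,\bigr|
\;\le\; c_1 \log\!\bigl(\max(1,h_\Theta(A(\tau)))\bigr) + c_2 .
\]
From the upper side we obtain $h_F(A(\tau)) \le \tfrac{1}{2} h_\Theta(A(\tau)) + c_1 \log(1+h_\Theta(A(\tau))) + c_2$, and then, using the trivial bound $\log(1+t) \le 1+t$ for $t \ge 0$, we absorb the logarithm into a linear (hence polynomial) term, yielding the second inequality
\[
h_F(A(\tau)) \;\le\; \gamma_7 \bigl(h_\Theta(A(\tau)) + 1\bigr)^{\gamma_8}
\]
with, for example, $\gamma_8 = 1$ and $\gamma_7 = \tfrac{1}{2} + c_1 + c_2$.

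For the first inequality one proceeds symmetrically: Pazuki's lower estimate gives $h_\Theta(A(\tau)) \le 2 h_F(A(\tau)) + 2c_1 \log(1+h_\Theta(A(\tau))) + 2c_2$, which is an implicit bound since $h_\Theta$ appears on both sides. To disentangle it, use $2c_1 \log(1+t) \le \tfrac{1}{2}\, t + c_3$ for all $t \ge 0$ and a suitable $c_3 = c_3(g)$, and absorb $\tfrac{1}{2} h_\Theta(A(\tau))$ to the left. This yields $h_\Theta(A(\tau)) \le 4 h_F(A(\tau)) + 2(c_2+c_3)$, which already has the required polynomial shape; one then takes $\gamma_6 = 1$ and chooses $\gamma_4, \gamma_5$ appropriately.

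The only potential subtlety, and the one point that merits any care at all, is verifying that the Theta height $h_\Theta$ defined here (the Weil height of the point $\Theta_\tau(0) \in \mathbb{P}^N(\overline{\mathbb{Q}})$ in the coordinates of \cite{David93}) matches the normalization in Pazuki's corollary. Once this identification is in place, the proof amounts to the two one-line rearrangements above. There is no genuine technical obstacle: everything is driven by Pazuki's comparison, and the polynomial exponents $\gamma_6, \gamma_8$ can be taken equal to $1$, with the effective constants $\gamma_4, \gamma_5, \gamma_7$ extracted directly from the constants in Pazuki's statement.
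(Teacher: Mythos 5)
Your proposal is correct and follows essentially the same route as the paper, which likewise deduces the lemma directly from Corollary 1.3 of Pazuki's paper; your extra bookkeeping (absorbing the logarithmic error terms into linear bounds, so that one may even take $\gamma_6=\gamma_8=1$) is sound, granting the standard lower bound on the Faltings height in terms of $g$ when choosing $\gamma_5$. The normalization compatibility between David's theta embedding and Pazuki's theta height that you flag is indeed the only point needing care, and it is exactly what the paper's citation implicitly relies on.
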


We need a result of Masser and W\"ustholz from \cite{MassWu}. This explains how to associate an $A(\tau)$ to any abelian variety via an isogeny.

Let $H$ be a Riemann form on $G$. If $G'$ is an abelian subvariety of $G$, because of Lemma 1.1 of \cite{MassWu}, we can take
\begin{equation}\label{eqdeg}
\left(\deg_{H} G'\right)^2=(\dim G' !)^2 \det \text{Im}\left( H_{|_{\Lambda(G')}}\right)
\end{equation}
as a definition of the degree $\deg_{H} G'$ of $G'$ with respect to $H$, where $\Lambda(G')=T(G')\cap \Lambda(G)$ and $H_{|_{\Lambda(G')}}$ is the restriction of the Riemann form $H$ to $\Lambda(G')$. If the divisor $\mc{D}_{H}$ associated to $H$ is very ample, then $\deg_{H} G'$ coincides with the degree of $G'$ in any projective embedding of $G$ associated to $\mc{D}_{H}$. For more details about this see p. 408 of \cite{MassWu} and p. 238 of \cite{LangNTIII}.

The following lemma associates an $A(\tau)$ to any abelian variety $G$ via an isogeny.
 
\begin{lemma}[\cite{MassWu}, Lemma 4.3]\label{lemMW}
Suppose $G$ is an abelian variety of dimension $g$ defined over $L$ and let $H$ be a positive definite element of NS$(G)$ of degree $\delta$. Then there exist $\tau \in S_g$ and an isogeny $f$ from $G$ to $A(\tau)$ of degree $\sqrt{\delta}$ with $f^* H_\tau= H$. Further, $A(\tau)$ is defined over an extension of $L$ of relative degree $\ll \delta^g$.
\end{lemma}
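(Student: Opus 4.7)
The plan is to realize the principally polarized quotient $A(\tau)$ as $G/K_1$, where $K_1$ is a maximal isotropic (Lagrangian) subgroup scheme of the kernel of the polarization morphism attached to $H$, and to control the field of definition of $A(\tau)$ by a Galois-descent argument on the finite set of Lagrangians. Concretely, I would consider the polarization morphism $\phi_H:G\to G^\vee$ associated to $H$: this is an $L$-rational isogeny (because $H\in\mathrm{NS}(G)$ is $L$-rational), and its kernel $K(H):=\ker\phi_H$ is a finite $L$-group scheme of order $\chi(L(H))^2=\det(\mathrm{Im}\,H|_{\Lambda(G)})=\delta$, carrying a non-degenerate alternating Weil-type pairing $e_H$ induced by $H$.

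Next, pick any Lagrangian $K_1\subset K(H)$, which necessarily has order $\sqrt{\delta}$, and take $f:G\to A:=G/K_1$; this is an isogeny of degree $\sqrt{\delta}$, and by a standard descent of polarizations the form $H$ descends to a Riemann form $H'$ on $A$ with $f^*H'=H$, the Lagrangian property forcing $H'$ to be principal. Any principally polarized $(A,H')$ of dimension $g$ is analytically of the form $\C^g/(\Z^g+\tau\Z^g)$ for some $\tau\in S_g$ with $H_\tau=H'$, so $A\cong A(\tau)$. For the field of definition, the absolute Galois group $\mathrm{Gal}(\overline{L}/L)$ permutes the finite set $\Sigma$ of Lagrangians of $K(H)$, and the fixed field of the stabilizer of the particular $K_1$ chosen is an extension of $L$ of degree at most $|\Sigma|$ over which $K_1$, and hence $f$ and $A(\tau)$, are defined; it thus suffices to show $|\Sigma|\ll \delta^g$. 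Decomposing $K(H)$ into its $p$-primary components by the Chinese remainder theorem reduces this to bounding the number of Lagrangians in a symplectic $p$-group with elementary divisors $(p^{a_1},\dots,p^{a_g})$, a count that can be carried out by induction on $g$.

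The main obstacle I expect is the uniform counting of Lagrangians in this last step. The balanced case $(p^a,\dots,p^a)$, where the count equals $\prod_{i=1}^g(p^{ai}+1)$, is classical, but extending the bound to arbitrary elementary-divisor types without exceeding $\delta^g$ requires either a careful induction on the divisors or a comparison with the total number of order-$\sqrt{\delta}$ subgroups of the ambient $\sqrt{\delta}$-torsion $G[\sqrt{\delta}]$, whose order is exactly $\delta^g$. Once this combinatorial bound is established the rest of the argument, being an assembly of standard constructions, goes through cleanly.
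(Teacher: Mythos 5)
The paper does not actually prove this lemma --- it is quoted verbatim from Masser and W\"ustholz \cite{MassWu}, Lemma 4.3 --- so your proposal should be measured against their argument, and in outline it is the same: $\ker\phi_H$ has order $\delta$ and carries the nondegenerate pairing $e_H$; quotienting by a maximal isotropic subgroup $K_1$ of order $\sqrt\delta$ yields an isogeny of degree $\sqrt\delta$ onto a principally polarized abelian variety, which is analytically an $A(\tau)$; and the field of definition is controlled through the Galois orbit of $K_1$. The genuine gap is exactly the step you flag: the bound $|\Sigma|\ll\delta^g$ is never established, and the two routes you sketch for it do not work as stated. The ``balanced case'' count is not $\prod_i(p^{ai}+1)$ once the exponent exceeds $1$: already for $g=1$ and $\ker\phi_H\cong(\Z/p^2\Z)^2$ there are $p^2+p+1$ maximal isotropic subgroups of order $p^2$ (the $p(p+1)$ cyclic ones together with $p\cdot(\Z/p^2\Z)^2$), not $p^2+1$. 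And comparing with the order $\delta^g$ of $G[\sqrt\delta]$ proves nothing by itself, since the number of subgroups of a finite abelian group can exceed its order (e.g.\ $(\Z/p\Z)^{2g}$ for $g\ge 2$). There is also a structural snag in taking an \emph{arbitrary} Lagrangian: such a $K_1$ may need as many as $2g$ generators (again $p\cdot(\Z/p^2\Z)^2$ inside $(\Z/p^2\Z)^2$), so the cheap count of conjugates by generating tuples gives only $\delta^{2g}$, which is too weak.

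The standard repair, implicit in \cite{MassWu}, is to choose $K_1$ of a special shape instead of counting all Lagrangians. By the elementary divisor theorem there is a basis $\omega_1,\dots,\omega_{2g}$ of $\Lambda(G)$ with $\mathrm{Im}\,H(\omega_i,\omega_{g+j})=d_i\delta_{ij}$ and $\mathrm{Im}\,H(\omega_i,\omega_j)=\mathrm{Im}\,H(\omega_{g+i},\omega_{g+j})=0$, where $d_1\mid\dots\mid d_g$ and $d_1\cdots d_g=\sqrt\delta$; enlarging $\Lambda(G)$ to the lattice $\Lambda'$ spanned by $\omega_1/d_1,\dots,\omega_g/d_g,\omega_{g+1},\dots,\omega_{2g}$ makes $H$ principal and produces the isogeny with kernel $K_1=\Lambda'/\Lambda(G)\cong\bigoplus_i\Z/d_i\Z$, generated by $g$ elements. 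Since $\phi_H$ is $L$-rational, every Galois conjugate of $K_1$ is a subgroup of $\ker\phi_H$ isomorphic to $K_1$, hence determined by a $g$-tuple of elements of a group of order $\delta$; the orbit therefore has size at most $\delta^g$, which bounds $[L(K_1):L]$, and the further extension needed to rationalize the theta embedding of $G/K_1$ as $A(\tau)$ has degree bounded in terms of $g$ alone and is absorbed into the implied constant. (Alternatively your route can be salvaged by bounding the number of \emph{all} order-$\sqrt\delta$ subgroups of $\ker\phi_H$ via intermediate lattices $\Lambda(G)\subset M\subset\Lambda(G)^{\vee}$ in Hermite normal form, giving roughly $\delta^{g-1/2+o(1)}$; but that is precisely the combinatorial work your proposal leaves open.)
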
 

The following theorem of David gives the core of the proof of Proposition \ref{lemgeneratorstau}.

\begin{theorem}[\cite{David93}, Th\'eor\`eme 1.4]\label{David}
Suppose that $A(\tau)$ is defined over a number field $L$ of degree $\kappa\geq 2$ and set $h=\max \{ 1,h_\Theta(A(\tau)) \}$. There are two positive constants $\gamma_9, \gamma_{10}$, depending only on $g$, such that any $P \in A(\tau)(L)$ satisfies one of the following two properties:
\begin{enumerate}
\item \label{caso1} there exists an abelian subvariety $B$ of $A(\tau)$, with $B\neq A(\tau)$, of degree at most $$\gamma_9 \rho (A(\tau),L)^g (\log \rho (A(\tau),L))^g$$ such that $P$ has order at most $$\gamma_9  \rho (A(\tau),L)^g \log \rho (A(\tau),L)^g$$ modulo $B$,
\item  \label{caso2} we have $$\widehat{h}_{H_\tau}(P)\geq \gamma_{10} \rho (A(\tau),L)^{-4g-2}(\log 2 \rho (A(\tau),L))^{-4g-1}h,$$
\end{enumerate} 
where
$$
 \rho (A(\tau),L)=\frac{\kappa(h+\log \kappa)}{\Vert \textup{Im} \tau \Vert}+ \kappa^{1/(g+2)}.
$$
Here $\Vert \cdot \Vert$ denotes the sup norm on the space on $g\times g$ matrices with its canonical basis.
\end{theorem}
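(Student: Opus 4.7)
My plan follows the classical transcendence method for height lower bounds on abelian varieties, in the spirit of Philippon--Waldschmidt and the earlier works of David himself. Fix $P \in A(\tau)(L)$ and argue by contradiction: assume that alternative (ii) fails, so $\widehat{h}_{H_\tau}(P)$ is bounded above by the displayed quantity, and aim to produce a proper abelian subvariety $B$ and a multiple of $P$ of the size required by alternative (i).

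First, I would lift $P$ to a logarithm $u \in \mathbb{C}^g$ in a fundamental parallelotope for $\Lambda = \mathbb{Z}^g + \mathbb{Z}^g \tau$. Its Euclidean norm is controlled by $\widehat{h}_{H_\tau}(P)^{1/2}$ together with a contribution depending on $\|\textup{Im}\,\tau\|^{-1}$, which is precisely why the mixed parameter $\rho(A(\tau),L) = \kappa(h+\log\kappa)/\|\textup{Im}\,\tau\| + \kappa^{1/(g+2)}$ is the natural measure of complexity of the pair $(A(\tau),L)$. Next, via Siegel's lemma over $L$, I would construct a nonzero polynomial $F$ in the coordinate theta functions $\Theta_\tau$ of total degree of order $D$, with logarithmic coefficient height of order $\kappa(h+\log\kappa)\cdot D^g$, vanishing to high order along $0,P,2P,\dots,(K-1)P$. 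The parameters $D$ and $K$ are calibrated so that the number of linear conditions is slightly smaller than the dimension of the ambient space of sections; this is possible because the Theta-height $h$ enters only linearly in the estimate of the coefficient height.

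The core transcendence step is a Schwarz-lemma extrapolation in $g$ complex variables. Using the smallness of $\widehat{h}_{H_\tau}(P)$, one bounds the values $F(ku)$ at further multiples $K \le k < K'$ with $K' \gg K$; these are algebraic numbers of controlled degree and height, so Liouville's inequality forces them to vanish once they are small enough. This forces $F$ to vanish along many more multiples of $P$ than originally imposed, and applying Philippon's zero estimate on the commutative group variety $A(\tau)$ produces a proper abelian subvariety $B$ that contains a full arithmetic progression $\{kP\}$ modulo $B$. Tracking the degrees and multiplicities back through the parameters yields bounds $\deg B,\ \textup{ord}(P \bmod B) \ll \rho^{g}(\log \rho)^{g}$, which is exactly alternative (i).

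The main obstacle is the calibration of parameters to produce the sharp exponents $4g+2$ and $4g+1$ appearing in the lower bound of (ii). These arise from the $g$-dimensional Schwarz lemma contribution (the number of partial derivatives required scales polynomially with $g$), combined with the standard optimization between the degree $D$ and the length $K$ of the interpolation. A subsidiary difficulty is the uniform analytic control of theta functions and their derivatives in terms of $\|\textup{Im}\,\tau\|^{-1}$ and $h$: for this I would rely on the comparison of Lemma \ref{lemheight} to translate analytic size estimates on $\Theta_\tau$ back into the arithmetic quantities $h_\Theta$ and $h_F$ in a way that keeps the dependence on $\tau$ transparent throughout the construction.
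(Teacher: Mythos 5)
This is a cited result: the paper quotes Th\'eor\`eme~1.4 of David \cite{David93} verbatim and offers no proof of it, so there is no in-paper argument to compare with. Your sketch is a reasonable high-level description of the transcendence-theoretic strategy David actually uses (auxiliary section constructed by Siegel's lemma with coefficient-height controlled by $h_\Theta$, extrapolation via a multivariable Schwarz lemma and Liouville's inequality, and Philippon's zero estimate on the commutative algebraic group producing the obstructing abelian subvariety $B$ together with the bound on the order of $P$ modulo $B$). You also correctly identify the delicate points, namely the appearance of $\rho(A(\tau),L)$ as the natural complexity parameter mixing $\kappa$, $h$, and $\|\operatorname{Im}\tau\|^{-1}$, and the calibration of the interpolation parameters that produces the exponents $4g+2$ and $4g+1$. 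But none of this is, or needs to be, reproduced in the present paper; if this section is meant to stand in for the paper's treatment, the honest answer is that the paper relies on the citation and uses the result as a black box (together with the remark that $\|\operatorname{Im}\tau\|\geq\sqrt3/2$ so that factor can be dropped). If you intend to reconstruct David's original proof, the sketch is directionally correct but would require substantial filling in of the zero-estimate bookkeeping to recover the precise exponents, which is where essentially all of the labor in David's paper lies.
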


Note that $\Vert \textup{Im} \tau \Vert \geq \sqrt 3 /2$. Therefore, we can and do ignore that factor in applying Theorem \ref{David}.

We are now ready to prove the following lemma.

\begin{lemma}\label{lemheighttor}
Suppose that $G$ is defined over a number field $L$ of degree $\kappa \geq 2$ and let $\mc{D}$ be an ample symmetric divisor on $G$ corresponding to a Riemann form $H_\D$ of degree $l$. Then, for every non-torsion $P \in G(L)$ we have
\begin{equation}\label{heightb}
\widehat{h}_{\mc{D}}(P) \gg \kappa^{-\gamma_{11}} (h_F(G)+\gamma_{12})^{-\gamma_{13}},
\end{equation}
while if $P \in G(L)_{\textup{tor}}$ then its order is
\begin{equation}\label{ordb}
\ll \kappa^{\gamma_{14}} (h_F(G)+\gamma_{12})^{\gamma_{15}},
\end{equation}
for some positive $\gamma_{11}, \dots, \gamma_{15}$ depending only on $g$ and $l$ and independent of $G$, $L$, $\kappa$ and $\mc{D}$.
\end{lemma}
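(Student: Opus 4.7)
The plan is to reduce to a principally polarized variety via Masser--W\"ustholz, apply David's Theorem \ref{David}, and iterate through abelian subvarieties when necessary.

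First I would apply Lemma \ref{lemMW} to obtain $\tau \in S_g$ and an isogeny $f : G \to A(\tau)$ with $f^* H_\tau = H_\D$, of degree $\sqrt{l}$, with $A(\tau)$ defined over a field $L_1 \supseteq L$ of degree $\ll \kappa$ (absorbing the $l^g$ factor into the implied constant, since $l$ and $g$ are fixed). The pull-back relation $f^* H_\tau = H_\D$ ensures $\widehat{h}_{H_\tau}(f(P)) = \widehat{h}_\D(P)$, and since $f$ has bounded degree, standard isogeny-invariance of the stable Faltings height gives $h_F(A(\tau)) \leq h_F(G) + C_1$ for some constant $C_1$. Invoking the first inequality of Lemma \ref{lemheight} then yields $h := \max\{1, h_\Theta(A(\tau))\} \ll (h_F(G)+1)^{\gamma_6}$, and therefore $\rho := \rho(A(\tau), L_1)$ is bounded by a polynomial in $\kappa$ and $(h_F(G)+1)$.

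Next, setting $Q = f(P) \in A(\tau)(L_1)$, I would apply Theorem \ref{David}. In Case (2) the inequality $\widehat{h}_{H_\tau}(Q) \gg \rho^{-(4g+2)}(\log 2\rho)^{-(4g+1)} h$ combined with the polynomial bounds on $\rho$ and $h$ immediately gives \eqref{heightb}. In Case (1) there is a proper abelian subvariety $B \subsetneq A(\tau)$ of polynomially bounded degree and an integer $N_1 \ll \rho^g(\log \rho)^g$ with $N_1 Q \in B$. I would then iterate: repolarize $B$ using Lemma \ref{lemMW} again (tracking the new polarization degree, field of definition, and Faltings height of $B$), and re-apply Theorem \ref{David} to the image of $N_1 Q$. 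Since the ambient dimension strictly decreases each time we land in Case (1), after at most $g$ iterations either some step realizes Case (2) (giving the height lower bound for $P$) or the iterated multiple of $Q$ is forced to be $O$. For non-torsion $P$ the latter cannot occur, so Case (2) must materialize at some step, yielding \eqref{heightb}. For torsion $P$, Case (2) can never hold (it would contradict $\widehat{h}(Q)=0$), so Case (1) recurs throughout the $g$ steps; the product of the integers $N_i$, combined with the bounded isogeny degrees relating $\mathrm{ord}(Q)$ and $\mathrm{ord}(P)$ via the dual isogeny, gives \eqref{ordb}.

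The main obstacle will be the bookkeeping along the induction: at each step one must bound the Faltings height of the subvariety $B_i$ and the degree $[L_i : \Q]$ of its field of definition in terms of the original data, while absorbing the polynomial losses coming from the bound on $\deg_{H_i} B_i$ in Case (1) of Theorem \ref{David}. Since $g$ is fixed and at most $g$ iterations occur, these polynomial losses compose into bounds of the required polynomial shape, producing the final exponents $\gamma_{11}, \gamma_{12}, \gamma_{13}, \gamma_{14}$ depending only on $g$ and $l$.
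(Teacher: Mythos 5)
Your proposal follows essentially the same route as the paper: reduce to a principally polarized $A(\tau)$ via Lemma \ref{lemMW} and Lemma \ref{lemheight}, run the case analysis of Theorem \ref{David}, and iterate (the paper phrases it as induction on the dimension) through the subvariety $B$ arising in case (1), propagating the height and order bounds through bounded multiplications and bounded-degree isogenies. The one ingredient you leave as ``bookkeeping'' --- bounding the Faltings height of the subvariety polynomially in $h_F(A(\tau))$ and its degree $\Delta$ --- is precisely what the paper supplies via the orthogonal complement $B^\perp$, Lemma 1.4 and estimate (7.2) of \cite{MassWu}, and the absolute lower bound for the Faltings height, so your sketch is consistent with the paper's argument.
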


The inequality \eqref{ordb} has been originally proved by Masser and Zannier in Lemma 7.1 of \cite{MasserZannier15} for $g=2$ and in Proposition 7.1 of \cite{MZpreprint} in general, obtaining explicit values for the exponents, namely $\gamma_{14}=\gamma_{15}=8^g g!^2$ when $G$ is principally polarized. Moreover, recently and independently Bosser and Gaudron (\cite{Bosser18}, Theorem 1.3) proved inequality \eqref{heightb} while R\'emond (\cite{Remond18}, Proposition 2.9) showed inequality \eqref{ordb}.


\begin{proof}
First, let us apply Lemma \ref{lemMW} and \ref{lemheight} to associate an $A(\tau)$ to $G$. We have an isogeny $f$ of degree $\sqrt{l}$ between $G$ and $A(\tau)$ with $f^*H_\tau =H_{\mc{D}}$ and everything is defined over an extension of $L$ of degree $\ll l^g$. Therefore we can reduce to proving the bounds for $Q=f(P)$ because isogenies change the order of a torsion point, the N\'eron-Tate height of a point and the Faltings height of the abelian variety by bounded factors depending polynomially only on their degree, see \cite{Hindry2000}, Theorem B.5.6 and \cite{MassWu}, (7.2) on p.~436.

We proceed by induction on the dimension $g$ of $A(\tau)$. For $g=1$, there is no proper non-zero abelian subvariety of an elliptic curve, therefore if $Q$ is not torsion it must satisfy the height inequality in (\ref{caso2}) of Theorem \ref{David}. If $Q \in A(\tau)(L)_{\textrm{tor}}$, then (\ref{caso1}) is true with $B=0$ and the claim follows from Lemma \ref{lemheight}.

Now suppose $g>1$. If $Q$ is non-torsion and satisfies the inequality in (\ref{caso2}) or if $Q$ has finite order and $B=0$ we are done. If this is not the case, then (\ref{caso1}) must hold with $B$ of positive dimension. Therefore, there is some positive integer $e$ with
$$
e \leq \gamma_9 \rho (A(\tau),L)^g \log \rho (A(\tau),L)^g,
$$
such that $eQ$ lies in $B$, a proper non-zero abelian subvariety of $A(\tau)$ of degree 
\begin{equation}\label{ineqdelta}
\Delta\leq \gamma_9 \rho (A(\tau),L)^g (\log \rho (A(\tau),L))^g.
\end{equation}
In both cases, if we find lower bounds of the form \eqref{heightb} and \eqref{ordb} for $eQ$, then we are done as, by standard properties of the N\'eron-Tate height, we have
$$
\widehat{h}_{H_\tau}(Q)=e^{-2}\widehat{h}_{H_\tau}(eQ)\geq \left(\gamma_9 \rho (A(\tau),L)^g (\log \rho (A(\tau),L)^g) \right)^{-2} \widehat{h}_{H_\tau}(eQ),
$$
and the order of $Q$ is at most $e$ times the order of $eQ$.

As before, we consider the Riemann form $H_\tau$ associated to $\tau$ as an element of $\text{NS}(A(\tau))$. By restricting $H_\tau$ to $B$ we get an element of $ \text{NS}(B)$ of degree $\Delta^2/(\dim B!)^2$.
Now we use Lemma \ref{lemMW}. Suppose $B$ has dimension $g'$. Then, there exist $\nu \in S_{g'}$ and an isogeny $f'$ from $B$ to $A'(\nu)$ of degree $\Delta/g'!$ such that  $f'^*H_\nu=H_{\tau|_B}$. Moreover, we have that $A'(\nu)$ is defined over an extension of $L$ of degree $\ll \Delta^{2g'}$.

If $eQ$ has infinite order, by Theorem B.5.6 (d) of \cite{Hindry2000} and the inductive hypothesis and  we have
$$
\widehat{h}_{H_\tau}(eQ)= \widehat{h}_{H_\nu}(f'(eQ)) \gg \left(\kappa \Delta^{2g'}\right)^{-\gamma_{16}} \left( h_F(A'(\nu))+1\right)^{-\gamma_{17}}.
$$
On the other hand, if $eQ$ is torsion, then its order is bounded by the order of $f'(eQ)$ times the degree of $f'$. Therefore, by the inductive hypothesis we have that $f'(eQ)$ has order at most
$$
\gamma_{18} (\kappa \Delta^{2g'})^{\gamma_{19}} (h_F(A'(\nu))+1)^{\gamma_{20}}.
$$

To conclude, we want to bound $h_F(A'(\nu))$ with a polynomial in $h_F(A(\tau))$ and $\Delta$. Since $A'(\nu)$ and $B$ are isogenous we have, by (7.2) on p.~436 of \cite{MassWu},
$$
h_F(A'(\nu)) \ll h_F(B)+ \frac{1}{2}\log (\Delta /(g'!)) + 1.
$$
By Lemma 1.4 of \cite{MassWu}, there is an isogeny of degree at most $(\Delta/g'!)^2$ from $B \times B^\perp$ to $A(\tau)$, where $ B^\perp$ is the abelian subvariety of $A(\tau)$ orthogonal to $B$ with respect to the Riemann form $H_\tau$. The dual isogeny from $A(\tau)$ to $B \times B^\perp$ has degree at most $(\Delta/g'!)^{4g-2}$ (see \cite{Hindry2000}, Remark A.5.1.6). Combining this with (7.2) on p.~436 of \cite{MassWu}, we have
$$
h_F(B)+h_F(B^\perp) \ll h_F(A(\tau))  +\log (\Delta^2/(g'!)^2) + 1.
$$
We can forget about the term $h_F(B^\perp)$ since there exists a lower bound for the value of the Faltings height which depends only on the dimension (see Remark 1.4 in \cite{Pazu}).
Finally, we obtain the claim combining these last two estimates and \eqref{ineqdelta}.

\end{proof}

We are now ready to prove Proposition \ref{lemgeneratorstau}. Suppose first that the points $Q_1, \dots , Q_m$ are all torsion. The result easily follows from Lemma \ref{lemheighttor}.

Now suppose not all points are torsion. Then, by Theorem A of \cite{Masser88}, we have
$$
|\bo{a}_i|\leq n^{n-1}\omega \left( \frac{q}{\eta} \right)^{\frac{1}{2}(n-1)},
$$
where $\omega$ is the cardinality of $G(L)_{\text{tor}}$, $\eta= \inf \widehat{h}_{\mc{D}}(P)$ for $P \in G(L) \setminus G(L)_{\text{tor}}$, and recall that $q$ is an upper bound for the height of the $Q_i$. Again, the claim follows from Lemma \ref{lemheighttor}. 


\section{Proof of Theorem \ref{maintheorem}} \label{proofmain}

First, we reduce to the case in which $\cB $ is principally polarized. By Corollary 1 on p.~234 of \cite{Mumford}, the generic fiber $\cB_\eta$ is isogenous to a principally polarized $\cB'_\eta$, which extends to an abelian scheme $\cB'$ over a curve $S'$ that covers $S$. By Lemma \ref{lemisog} we can prove the Theorem for the irreducible components of $f^{-1}(\cC)$ in $\cA'$. We then just suppose that $\cB $ is a principally polarized abelian scheme.

The principal polarization gives an ample symmetric divisor $\D$ on $\cB$ and an embedding in some $\mathbb{P}^N$ and therefore a Weil height $h_\D$ on $\cB$ and a Weil height $h_{\D_s}$ and a N\'eron-Tate height $\widehat{h}_{\D_s}$ on the fibers $\cB_s$. We can also define a height $h_\cC$ on $\cC$ by taking the maximum of the heights of the coordinates of $\cC$ in the different copies of $\cB$.

As in Section \ref{abintper}, let $\widehat{\cC}$ denote what remains from $\cC$ after removing the singular points and the points at which $\pi_{|_{\cC}}$ is ramified. Let $\cC_0$ be the set of points of $\widehat{\cC}$ such that $P_1,\dots ,P_n$ are $R$-dependent on the specialized abelian variety. Since the $P_i$ are not identically dependent, we have that $\cC_0$ consists of algebraic points.

Let $k$ be a number field over which $\cC$ is defined. Suppose also that the finitely many points we excluded from $\cC$ to get $\widehat{\cC}$, which are algebraic, are defined over $k$. 
By the above considerations we also have that $\cC$ embeds in $(\mathbb{P}^N)^n$. After removing finitely many further points (possibly enlarging $k$ again) from $\cC$ we can suppose that $\cC$ embeds in an affine space $\mathbb{A}^{nN}$ and call $\overline{\cC}$ the closure of $\cC$ in $\mathbb{A}^{nN}$.

In this section, the constants depend on $\cB$, $\cC$ and on the choices (e.g. of polarization) we have made until now.

Now, by Silverman's Specialization Theorem (\cite{Sil83}, Theorem C) our set $\cC_0$ projects via $\pi$ to a set of bounded height. Therefore, there exists a positive $\gamma_1$ with
\begin{equation}\label{boundedheight}
h_\cC(\bo{c}_0)\leq \gamma_1,
\end{equation}
for all $\bo{c}_0 \in \cC_0$. 

We see now a few consequences of this bound.
If $\delta>0$ is a small real number, let us call
\begin{equation*} 
\cC^{\delta}=\lg \bo{c} \in \cC: |\bo{c}| \leq \frac{1}{\delta}, |\bo{c}-\bo{c}^*|\geq  \delta \mbox{ for all $\bo{c}^* \in \overline{\cC}\setminus \widehat{\cC}$} \rg .
\end{equation*}

Here, $|\cdot|$ indicates the max norm induced by the embedding in $\mathbb{A}^{nN}$.
\begin{lemma}\label{lemconj}
There is a positive $\delta$ such that there are at least $\frac{1}{2}[k(\bo{c}_0):k]$ different $k$-embeddings $\sigma$ of $k(\bo{c}_0)$ in $\C$ such 
that $\bo{c}_0^\sigma$ lies in $\cC^{\delta}$ for all $\bo{c}_0 \in \cC_0$. 
\end{lemma}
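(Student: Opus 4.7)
The plan is to combine Silverman's Specialization Theorem (already used above to obtain \eqref{boundedheight}) with a Northcott-type counting lemma for the Galois conjugates of $\bo{c}_0$. Recall that, by the setup of the section, the finite set $B:=\overline{\cC}\setminus\widehat{\cC}$ of ``bad'' points is algebraic and defined over $k$, so every $k$-embedding $\sigma:k(\bo{c}_0)\hookrightarrow\C$ automatically sends $\bo{c}_0$ to another point of $\widehat{\cC}$. The only question is therefore how many of these conjugates have norm larger than $1/\delta$ or come within distance $\delta$ of some point of $B$.

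The main tool is the standard consequence of the definition of the Weil height: if $\alpha\in k(\bo{c}_0)$ satisfies $h(\alpha)\leq H$, then for any $M>1$,
\[
\#\{\sigma:|\sigma(\alpha)|>M\}\ll_k H\,[k(\bo{c}_0):k]/\log M,
\]
and, if in addition $\alpha\neq 0$, the same bound holds for $\#\{\sigma:|\sigma(\alpha)|<1/M\}$ by applying the previous inequality to $1/\alpha$, whose height coincides with $h(\alpha)$ by the product formula. I intend to apply this lemma twice.

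First, \eqref{boundedheight} gives $h((\bo{c}_0)_i)\leq\gamma_1$ for each coordinate $(\bo{c}_0)_i$ of $\bo{c}_0$ in the fixed affine embedding $\cC\hookrightarrow\mathbb{A}^{nN}$. Summing the above bound over the $nN$ coordinates, one sees that for $\delta>0$ sufficiently small (depending only on $\cC$, $k$ and $\gamma_1$) at most $\tfrac{1}{4}[k(\bo{c}_0):k]$ embeddings $\sigma$ satisfy $|\bo{c}_0^\sigma|>1/\delta$. Second, for each $\bo{c}^*\in B$ the difference $\bo{c}_0-\bo{c}^*$ has at least one nonzero coordinate (since $\bo{c}_0\in\widehat{\cC}$ while $\bo{c}^*\notin\widehat{\cC}$), and its height is bounded by $\gamma_1+\max_{\bo{c}^*\in B}h(\bo{c}^*)+\log 2$, a constant depending only on $\cC$ and $k$. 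The inverse form of the lemma, applied to this nonzero coordinate, controls $\#\{\sigma:|\bo{c}_0^\sigma-\bo{c}^*|<\delta\}$; summing over the finitely many $\bo{c}^*\in B$ and shrinking $\delta$ further yields another $\tfrac{1}{4}[k(\bo{c}_0):k]$ excluded embeddings.

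Combining the two estimates leaves at least $\tfrac{1}{2}[k(\bo{c}_0):k]$ embeddings $\sigma$ for which $\bo{c}_0^\sigma$ has norm at most $1/\delta$ and lies at distance at least $\delta$ from every point of $B$, i.e.\ belongs to $\cC^\delta$. The only delicate point, and what has to be handled carefully, is that $\delta$ must be chosen \emph{uniformly} in $\bo{c}_0$; this is precisely why the uniform height bound \eqref{boundedheight} provided by Silverman and the fact that $B$ is a fixed finite set of $k$-rational points (so that $\max_{\bo{c}^*\in B}h(\bo{c}^*)$ is an absolute constant) are essential. I do not expect any serious technical obstacle beyond this: the argument is a routine Northcott-type counting on top of \eqref{boundedheight}.
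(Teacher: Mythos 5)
Your argument is correct and is essentially the paper's: the proof there simply notes that \eqref{boundedheight} bounds the heights of the coordinates of the points of $\cC_0$ and then invokes Lemma 8.2 of \cite{MZ14a}, whose proof is exactly the Northcott/Liouville-type conjugate count you carry out (few conjugates of a point of bounded height can be large, and few can be $\delta$-close to the finitely many $k$-rational excluded points, uniformly in $\bo{c}_0$). So you have just made explicit the counting that the paper outsources to the cited lemma.
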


\begin{proof}
The bound \eqref{boundedheight} implies that the coordinates of all $\bo{c}_0$ have bounded height. Then, the claim follows as in Lemma 8.2 of \cite{MZ14a}.
\end{proof}

\begin{lemma}\label{lemdif}
There exists a positive constant $\gamma_2$ such that, for every $\bo{c}_0 \in \cC_0$ and every $i=1, \dots , n$, we have
$$
\widehat{h}_{\D_{\pi(\bo{c}_0)}}(P_i(\bo{c}_0))\leq \gamma_2.
$$
\end{lemma}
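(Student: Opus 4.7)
The plan is to combine the height bound from Silverman's specialization theorem \eqref{boundedheight} with the standard comparison between the Weil height (coming from the projective embedding induced by the polarization) and the fiberwise Néron--Tate height in a family of abelian varieties.

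First, recall how $h_\cC$ was defined: after removing finitely many points we have $\cC \hookrightarrow (\mathbb{P}^N)^n$ via the embedding of $\cB$ into $\mathbb{P}^N$ coming from $\D$, and $h_\cC(\bo{c})$ is the maximum of the Weil heights of the $n$ coordinates. Identifying the image of $\cB$ in $\mathbb{P}^N$ with $\cB$ itself, this means
$$h_\D(P_i(\bo{c})) \leq h_\cC(\bo{c})$$
for every $\bo{c}\in \cC$ and every $i=1,\dots,n$, where $h_\D$ denotes the Weil height on the total space $\cB$ associated to the relatively ample symmetric divisor $\D$.

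Next, I would invoke the standard variation-of-height theorem for abelian schemes (due to Silverman, and rooted in Tate's telescoping argument applied in families): there exists a constant $\gamma_3 > 0$, depending only on $\cB \to S$ and $\D$, such that
$$\bigl|\, h_\D(P) - \widehat{h}_{\D_{\pi(P)}}(P) \,\bigr| \leq \gamma_3\bigl(1 + h_S(\pi(P))\bigr)$$
for every algebraic point $P\in \cB$ at which the comparison makes sense, where $h_S$ is a Weil height on $S$ (with respect to any fixed projective model of $S$). This reduces the desired bound on $\widehat{h}_{\D_{\pi(\bo{c}_0)}}(P_i(\bo{c}_0))$ to a bound on $h_S(\pi(\bo{c}_0))$.

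Finally, since $\pi_{|_\cC}\colon \cC \to S$ is a finite morphism of quasi-projective varieties defined over $k$, functoriality of the Weil height yields $h_S(\pi(\bo{c})) \leq \gamma_4(1 + h_\cC(\bo{c}))$ for some constant $\gamma_4$. Combining these inequalities with \eqref{boundedheight} gives, for all $\bo{c}_0 \in \cC_0$,
$$\widehat{h}_{\D_{\pi(\bo{c}_0)}}(P_i(\bo{c}_0)) \leq h_\D(P_i(\bo{c}_0)) + \gamma_3\bigl(1 + h_S(\pi(\bo{c}_0))\bigr) \leq \gamma_1 + \gamma_3\bigl(1 + \gamma_4(1 + \gamma_1)\bigr) =: \gamma_2,$$
as required. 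The only non-routine ingredient is the height comparison in the family, which is a well-established tool in this circle of ideas; everything else is formal manipulation of Weil heights.
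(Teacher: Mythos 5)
Your argument is correct and follows essentially the same route as the paper: the paper's proof likewise bounds the fiberwise Weil height by $h_\cC(\bo{c}_0)$, invokes the Silverman--Tate comparison (Theorem A of \cite{Sil83}) to control the difference with the N\'eron--Tate height by a multiple of $h_\cC(\bo{c}_0)+1$, and concludes from \eqref{boundedheight}. Your extra step passing through a height on $S$ is only a bookkeeping variant, harmless since $\pi_{|_\cC}$ is finite.
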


\begin{proof}
We have $h_{\D_{\pi(\bo{c}_0)}}(P_i(\bo{c}_0)) \leq h_\cC(\bo{c}_0)$ and, by a result of Silverman and Tate (Theorem A of \cite{Sil83}), we have $\widehat{h}_{\D_{\pi(\bo{c}_0)}}(P_i(\bo{c}_0))\leq h_{\D_{\pi(\bo{c}_0)}}(P_i(\bo{c}_0)) +\gamma_5(h_\cC(\bo{c}_0)+1)$. The claim follows from (\ref{boundedheight}).
\end{proof}


Now, by Northcott's Theorem \cite{Northcott1949} and \eqref{boundedheight}, in order to prove Theorem \ref{maintheorem} it is sufficient to bound the degree of the points in $\cC_0$ to prove finiteness.

Fix $\bo{c}_0 \in \cC_0$ and let $d_0=[k(\bo{c}_0):k]$ which we suppose to be large. First, by Lemma \ref{lemconj}, we can choose $\delta$, independently of $\bo{c}_0$, such that at least half of its conjugates lies in $\cC^\delta$. Since $\cC^\delta$ is compact, there are $\bo{c}_1, \dots , \bo{c}_{\gamma_4}\in \widehat{\cC} $ with corresponding neighborhoods $N_{\bo{c}_1}, \dots , N_{\bo{c}_{\gamma_4}}$, and $D_{\bo{c}_1}, \dots , D_{\bo{c}_{\gamma_4}} \subseteq \pi(\widehat{\cC})$, where $D_{\bo{c}_i}\subseteq \pi(N_{\bo{c}_i})$ contains $\pi(\bo{c}_i)$ and is homeomorphic to a closed disc and we have that the $\pi^{-1}(D_{\bo{c}_i})\cap N_{\bo{c}_i}$ cover $\cC^{\delta}$.

 We can then suppose that $D_{\bo{c}_1}$ contains $s_0^\sigma=\pi(\bo{c}_0^\sigma)$ for at least $\frac{1}{2\gamma_4}d_0 $ conjugates $\bo{c}^\sigma_0$ of $\bo{c}_0$ over $k$. Since each $s\in S(\C)$ has a uniformly bounded number of preimages $\bo{c} \in \cC(\C)$, we can suppose we have at least $\frac{1}{\gamma_5}d_0$ distinct such $s_0^\sigma$ in $D_{\bo{c}_1}$.

  Now, all such conjugates $\bo{c}_0^\sigma$ are contained in $\cC_0$ because the $P_1(\bo{c}_0^\sigma), \dots , P_n(\bo{c}_0^\sigma)$ satisfy the same relations holding between the points specialized at $\bo{c}_0$. So there are $\rho_1,\dots , \rho_n \in R$, not all zero, such that
\[
\rho_1P_1(\bo{c}_0^\sigma)+\dots +\rho_n P_n(\bo{c}_0^\sigma)=O
\]
on $\cB_{s^{\sigma}_0}$.

By Lemma \ref{lemdif}, we have
$$
\widehat{h}_{\D_{s^{\sigma}_0}}(P_i(\bo{c}_0^\sigma))\leq \gamma_2,
$$
and the $P_i(\bo{c}_0^\sigma)$ are defined over $k(\bo{c}_0^\sigma)$. 

If $\varphi_1, \dots ,\varphi_r$ are $\Z$-generators of $R$, one can write 
\begin{equation}\label{eqrho}
\rho_i=\sum_{j=1}^r a_{i,j}\varphi_j,
\end{equation}
 for some integers $a_{i,j}$. Now, if we call $Q_{i,j}=\varphi_j P_i(\bo{c}_0^\sigma)$, we have that the $Q_{i,j}$ are $nr$ $\Z$-dependent points in $\cB_{s^{\sigma}_0}(k(\bo{c}_0^\sigma))$ and have height
$$
\widehat{h}_{\D_{s^{\sigma}_0}} (Q_{i,j})  \ll  \widehat{h}_{\D_{s^{\sigma}_0}}(P_i(\bo{c}_0^\sigma))\ll 1.
$$

The divisor $\mc{D}_{s_0^\sigma}$ corresponds to a principal polarization and so to a degree 1 Riemann form on $T(\cB_{s_0^{\sigma}})\times T(\cB_{s_0^{\sigma}})$. Therefore, we can apply Proposition \ref{lemgeneratorstau} and we can suppose that for the coefficients $a_{i,j}$ in \eqref{eqrho} we have $| a_{i,j}|\ll [k(\bo{c}_0^\sigma):\Q]^{\gamma_6}  (h_F(\cB_{{s^{\sigma}_0}})+\gamma_7)^{\gamma_8}$.
Moreover, $h_F(\cB_{{s^{\sigma}_0}})\ll h_\cC(\bo{c}_0^\sigma)\ll 1$ (see, e.g., the discussion on p.~123 of \cite{David91}).
Thus we can suppose that all $|a_{i,j}|$ are $\ll d_0^{\gamma_6}$ and we then have that  there are at least $\frac{1}{\gamma_5} d_0$ distinct $s_0^\sigma \in D_{\bo{c}_1}(\gamma_9 d_0^{\gamma_6})$ (Recall the definition of $D(T)$ in \eqref{DefD(T)}).

By Proposition \ref{propest} we have that $|D_{\bo{c}_1}(\gamma_{9} d_0^{\gamma_6})|\ll_\epsilon d_0^{\epsilon \gamma_6}$. Therefore, if we choose $\epsilon = \frac{1}{2\gamma_6}$ we have a contradiction if $d_0$ is large enough, which proves that $d_0$ has to be bounded as required.
\section{Some consequences of Theorem \ref{thmscheme}} \label{aux}

We start by deducing from Theorem \ref{thmscheme} an apparently stronger statement, Theorem \ref{thmaux}. As before we consider an abelian scheme $\cA$ over a smooth irreducible curve $S$, of relative dimension $g$ and an irreducible curve $\cC\subseteq \cA$, not contained in a fixed fiber. Everything is defined over a fixed number field $k$.

\begin{theorem}\label{thmm}
Let $m$ be an integer, $1\leq m \leq g$. If the intersection of $\cC$ with the union of all flat subgroup schemes of $\cA$ of codimension at least $m$ is infinite, then there exists a finite cover $S'\rightarrow S$ such that $\cC\times_S S'$ is contained in a flat subgroup scheme of $\cA\times_S S'$ of codimension at least $m-1$.
\end{theorem}


\begin{proof}
We can suppose $m\geq 3$. Note that, if the hypothesis is true for $\cC$ in $\cA$ then it is true after any base change. We can therefore suppose that there is a $G$ that is the smallest component of a flat subgroup scheme of $\cA$ containing $\cC$ and that after any base change $S'\rightarrow S$, the curve $\cC\times_S S'$ is not contained in a flat subgroup scheme of smaller dimension. Note that, for any positive integer $d$, Theorem \ref{thmm} is true for $\cC$ if and only if it is true for the image of $\cC$ under the multiplication-by-$d$ map. Therefore we can suppose that $G$ is an abelian subscheme $\cB$ of $\cA$. If the codimension of $\cB$ in $\cA$ is at least $m-1$ we are done so we suppose codim$_\cA (\cB)\leq m-2$. Any point of $\cC$ lying in a component $H$ of a flat subgroup scheme of $\cA$ of codimension at least $m$ lies also in $H\cap \cB$. This is a component of a flat subgroup scheme of $\cB$. We see that it has codimension at least 2 in $\cB$:
$$
2\leq \text{codim}_\cA (H) - \text{codim}_\cA (\cB)=\text{dim} (\cB)-\text{dim} (H)\leq \text{dim} (\cB)-\text{dim} (H\cap \cB)=\text{codim}_\cB (H \cap\cB).
$$
Then $\cC$ contains infinitely many points lying in flat subgroups schemes of $\cB$ of codimension at least 2 and so, by Theorem \ref{thmscheme}, it is contained in a proper flat subgroup scheme of $\cB$ (possibly after a base change), contradicting the minimality of $\cB$.
\end{proof}

The following statement will be used for proving Theorem \ref{Pellthm}. The key point is that, by Lemma \ref{lemmat}, an abelian scheme whose generic fiber has no one-dimensional abelian subvariety cannot have codimension 1 flat subgroup schemes.

\begin{corollary}\label{cor1}
Let $\mc{A}, S$ and $\cC$ be as above. If $\cC \cap \bigcup G$, where the union runs over all flat subgroup schemes of $\cA$ which do not contain $\cC$, is infinite, then there exists a finite cover $S'\rightarrow S$ such that the generic fiber of $\cA\times_S S'$ contains a one-dimensional abelian subvariety.  
\end{corollary}

\begin{proof}
As in the proof above we can suppose that the smallest component of a flat subgroup scheme of $\cA$ containing $\cC$ is actually an abelian subscheme $\cB$ of $\cA$ and after any base change $\cC$ is not contained in a flat subgroup scheme of smaller dimension. We can also suppose that there is no base extension such that the generic fiber of $\cA$ has a one dimensional abelian subvariety if it did not have one already. Now, if $\cC \cap \bigcup G$, where the union runs over all flat subgroup schemes of $\cA$ which do not contain $\cC$, is infinite, then $\cC \cap \bigcup (\mc{B}\cap G)$ is infinite and each $\mc{B}\cap G$ considered in that union is a proper flat subgroup scheme of $\mc{B}$. But then, if $\cA$ has no one-dimensional abelian subvariety we must have codim$_\cB(\mc{B}\cap G)\geq 2$. In this case, by Theorem \ref{thmscheme}, $\cC$ is contained in a proper flat subgroup scheme of $\cB$ (possibly after a base change), contradicting the minimality of $\cB$.   
\end{proof}

We keep the above assumptions on $S$ and $\cC$ but we now assume that, for some $n\geq 1$, $\cA$ is the $n$-fold fibered power of an abelian scheme $\mc{B}\rightarrow S$ whose generic fiber has no one-dimensional abelian subvariety, even after a base change. Let $P_1, \dots, P_n\in \mc{B}_\eta(k(\cC))$ be the points defined by $\cC$. We define
$$
M=\lg (a_1, \dots , a_n) \in \Z^n: \sum_{i=1}^n a_i P_i=O \rg,
$$
to be the lattice of integral relations among the $P_i$. Moreover, for every $\bo{c}\in  \cC(\C)$, we let
 $$
\Lambda (\bo{c}) = \lg (a_1, \dots , a_n) \in \Z^n: \sum_{i=1}^n a_i P_i(\bo{c}) =O\rg.
 $$
 Then, we must have $\Lambda (\bo{c})\supseteq M$ for all $\bo{c} \in \cC(\C)$. 
 
 The following result is a  direct consequence of Corollary \ref{cor1}.
 
\begin{corollary}\label{ausxthm}
We have $\Lambda (\bo{c})= M$ for all except at most finitely many $\bo{c} \in \cC(\C)$.
\end{corollary}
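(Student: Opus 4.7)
The plan is to translate the existence of ``extra'' integer relations at a specialization $\bo{c}$ into membership in a flat subgroup scheme that does not contain $\cC$, and then invoke Corollary \ref{cor1} directly.

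First, for each nonzero $\bo{a}=(a_1,\dots,a_n)\in\Z^n$, I would consider the $S$-morphism
\[
\phi_{\bo{a}}:\cA=\cB^n\longrightarrow\cB,\qquad (x_1,\dots,x_n)\longmapsto a_1x_1+\cdots+a_nx_n,
\]
and set $G_{\bo{a}}:=\ker\phi_{\bo{a}}$. Since on each fiber $\phi_{\bo a}$ is surjective (for any index $i$ with $a_i\neq0$ it already surjects via the $i$-th factor, using that $[a_i]$ is surjective on the abelian variety $\cB_s$), the kernel $G_{\bo{a}}$ is a subgroup scheme of $\cA$ all of whose irreducible components have dimension $(n-1)g+1$ and hence dominate $S$. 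Thus $G_{\bo{a}}$ is a flat subgroup scheme of $\cA$.

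Next I would check that the hypotheses of Corollary \ref{cor1} are satisfied by $\cA=\cB^n$. Any one-dimensional abelian subvariety of $\cB_\eta^n$ would project to each factor either trivially or as an isogeny onto a one-dimensional abelian subvariety of $\cB_\eta$; by the standing assumption on $\cB_\eta$ all such projections vanish, which is absurd. Hence the generic fiber of $\cA$ carries no one-dimensional abelian subvariety, so Corollary \ref{cor1} applies.

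Finally I would put the pieces together. By definition, $\cC\subseteq G_{\bo a}$ if and only if the generic relation $\sum a_iP_i=O$ holds in $\cB_\eta$, i.e.\ $\bo{a}\in M$. Now suppose $\bo{c}\in\cC(\C)$ satisfies $\Lambda(\bo{c})\supsetneq M$ and pick any $\bo{a}\in\Lambda(\bo{c})\setminus M$; then $\bo{c}\in G_{\bo{a}}$ while $\cC\not\subseteq G_{\bo{a}}$. Therefore
\[
\bigl\{\,\bo{c}\in\cC(\C):\Lambda(\bo{c})\supsetneq M\,\bigr\}\;\subseteq\;\cC\cap\bigcup_{G\not\supseteq\cC}G,
\]
and the right-hand side is finite by Corollary \ref{cor1}. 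The only (very mild) obstacle is verifying that the kernels $G_{\bo{a}}$ are genuine flat subgroup schemes in the sense of Section~2; this is immediate from the surjectivity of $\phi_{\bo{a}}$ on fibers, but it is the one place where a sentence of justification is needed before the reduction to Corollary \ref{cor1} becomes automatic.
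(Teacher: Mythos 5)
Your proposal is correct and follows essentially the same route as the paper, whose proof likewise observes that any $\bo{c}$ with $\Lambda(\bo{c})\neq M$ lies in a flat subgroup scheme (the kernel of the morphism attached to an extra relation) not containing $\cC$, and then invokes Corollary \ref{cor1}. You simply make explicit two points the paper leaves implicit, namely the flatness of $\ker\phi_{\bo{a}}$ and the verification that $\cB_\eta^n$ inherits the absence of one-dimensional abelian subvarieties.
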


\begin{proof}
Each point $\bo{c}$ such that $\Lambda (\bo{c}) \neq M $ is contained in a flat subgroup scheme of $\cA$ which does not contain $\cC$. Therefore, there can only be at most finitely many such points by Corollary \ref{cor1}.
\end{proof}

Now, we present another application of our Theorem \ref{thmscheme}. Recently, Ghioca, Hsia and Tucker \cite{Ghioca17} proved a statement in the spirit of  unlikely intersections which is relatively similar to the main result of \cite{BC2017}. 
\begin{theorem}[\cite{Ghioca17}, Theorem 1.1]\label{thmHGT}
	Let $\pi_i:\mc{E}_i\rightarrow S$ be two elliptic surfaces over a curve $S$ defined over $\Qbar$ with generic fibers $E_i$, and let $\sigma_{P_i},\sigma_{Q_i}$ be sections of $\pi_i$ (for $i=1,2$) corresponding to points $P_i,Q_i \in E_i(\Qbar (S))$. If there exist infinitely many $s \in S(\Qbar)$ for which there exist some $m_{1,s}, m_{2,s}\in \Z$ such that $m_{i,s} \sigma_{P_i}(s)=\sigma_{Q_i}(s)$ for $i=1,2$, then at least one of the following properties hold:
	\begin{enumerate}
		\item there exist isogenies $\varphi:E_1\rightarrow E_2$ and $\psi:E_2\rightarrow E_2$ such that $\varphi(P_1)=\psi (P_2)$.
		\item for some $i \in \{ 1,2\}$, there exist $k_i\in \Z$ such that $k_iP_i=Q_i$ on $E_i$.
	\end{enumerate}
\end{theorem} 

In the Appendix we will see how the main theorems of \cite{BC2016} and \cite{BC2017} imply Theorem \ref{thmHGT} while in what follows we deduce an analogous statement for families of abelian varieties without elliptic factors. 

Let $\cB$ be as above and let $\cA$ be, $n+1$-fold fibered power of $\mc{B}$, for some $n\geq 1$. Let $\cC \subseteq \cA$ be an irreducible curve, as usual not contained in a fixed fiber, and suppose that everything is defined over a number field $k$. The curve $\cC$ defines points $P,P_1, \dots, P_n\in \cB_\eta(k(\cC))$ and we let 
$$
\Gamma=\left< P_1,\dots ,P_n \right>=\lg Q\in \cB_\eta(k(\cC)): Q=\sum_{i=1}^n a_i P_i, \mbox{ for some } (a_1, \dots , a_n) \in \Z^n \rg
$$       
be the subgroup of $\cB_\eta(k(\cC))$ generated by $P_1,\dots ,P_n $. This will have specializations $\Gamma(\bo{c})=\left< P_1(\bo{c}), \dots ,\right.$ $\left. P_n (\bo{c})\right>\subseteq \cB_{\pi(\bo{c})}(k(\bo{c}))$, for all $\bo{c}\in \cC(\C)$. The following is a consequence of Corollary \ref{ausxthm}.

\begin{theorem}\label{thmSG}
	If $\cB_\eta$ has no one-dimensional abelian subvariety and $P(\bo{c})\in \Gamma(\bo{c})$ for infinitely many $\bo{c}\in \cC(\C)$, then $P\in \Gamma $ identically.
\end{theorem}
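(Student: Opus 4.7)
The plan is to deduce this from Corollary~\ref{ausxthm} by viewing $\cA^{n+1}$ as the $(n+1)$-fold fibered power of $\cA$ and treating the $n+1$ sections $P, P_1, \dots, P_n$ on the same footing. Since by hypothesis $\cA_\eta$ has no one-dimensional abelian subvariety, Corollary~\ref{ausxthm} is applicable to this ambient scheme.

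Setting $P_0 := P$ for convenience, I would first define the lattice of generic integral relations
$$
M = \lg (a_0, \dots, a_n)\in\Z^{n+1}: \sum_{i=0}^n a_i P_i = O\rg
$$
inside $\cA_\eta(k(\cC))$, together with its specialized counterpart
$$
\Lambda(\bo{c}) = \lg (a_0, \dots, a_n)\in\Z^{n+1}: \sum_{i=0}^n a_i P_i(\bo{c}) = O\rg
$$
for every $\bo{c}\in\cC(\C)$. The hypothesis $P(\bo{c})\in\Gamma(\bo{c})$ rephrases precisely as the statement that $\Lambda(\bo{c})$ contains a tuple of the form $(1, -a_1(\bo{c}), \dots, -a_n(\bo{c}))$, that is, an element whose first coordinate equals~$1$.

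The next step is to apply Corollary~\ref{ausxthm} directly to $\cC\subseteq\cA^{n+1}$. Its hypotheses hold since $\cC$ is not contained in a fixed fiber and $\cA_\eta$ has no one-dimensional abelian subvariety, so the conclusion is that $\Lambda(\bo{c}) = M$ for all but finitely many $\bo{c}\in\cC(\C)$. Combining this with the assumption that infinitely many $\bo{c}\in\cC(\C)$ satisfy $P(\bo{c})\in\Gamma(\bo{c})$, I can choose at least one such $\bo{c}$ for which $\Lambda(\bo{c}) = M$. Then $M$ itself contains a tuple $(1, -b_1, \dots, -b_n)$, which translates to the identity $P = \sum_{i=1}^n b_i P_i$ in $\cA_\eta(k(\cC))$, that is, $P\in\Gamma$ as required.

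There is no serious obstacle once Corollary~\ref{ausxthm} is in hand, since the deduction is purely formal; all the analytic and geometric content has already been absorbed into Theorem~\ref{thmscheme} and its corollary. The only bookkeeping point to watch is the role of the distinguished coordinate corresponding to $P$: the equality $\Lambda(\bo{c}) = M$ for a single suitable $\bo{c}$ is already enough to transfer the relation $(1, -a_1(\bo{c}), \dots, -a_n(\bo{c}))\in\Lambda(\bo{c})$ into a relation in $M$ with first coordinate $1$, yielding $P\in\Gamma$ identically.
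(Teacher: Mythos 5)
Your deduction is correct and is exactly the argument the paper intends: Theorem~\ref{thmSG} is stated there as a direct consequence of Corollary~\ref{ausxthm}, applied to $\cC$ inside the $(n+1)$-fold fibered power of $\cA$ with the sections $P,P_1,\dots,P_n$, and your transfer of a single specialized relation $(1,-a_1,\dots,-a_n)\in\Lambda(\bo{c})=M$ into the identical relation $P=\sum_i a_iP_i$ is the intended bookkeeping. No gaps.
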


Note that the assumption on $\cA_\eta$ not having elliptic factors is necessary. Indeed, one can easily construct counterexamples from the fact that a non-torsion section of a (non-isotrivial) elliptic scheme specializes to a torsion point infinitely many times (see \cite{Zannier}, Notes to Chapter 3).

\section{Almost-Pell equation}

In this section we prove Theorem \ref{Pellthm}. For now, we fix $K$ to be a field of characteristic 0, and $D\in K[X]$ a squarefree polynomial of even degree $2d\geq 4$.

Consider the hyperelliptic curve defined by $Y^2=D(X)$. If we homogenize this equation, we obtain a projective curve which is singular at infinity. There exists however a non-singular model $H_D$ with two points at infinity which we denote by $\infty^+$ and $\infty^-$. We fix them by stipulating that the function $X^d\pm Y$ has a zero at $\infty^{\pm}$. The curve $H_D$ is then a hyperelliptic curve of genus $d-1$.

Let us denote by $J_D$ its Jacobian variety, i.e. the abelian group
\[
\mbox{Jac}(H_{D})=\mbox{Div}^0(H_{D})/\mbox{Div}^P(H_{D}),
\]
where $\mbox{Div}^0(H_{D})$ denotes the groups of divisors of degree 0 and $\mbox{Div}^P(H_{D})$ is the subgroup of principal divisors. If $\Delta \in \mbox{Div}^0(H_{D})$, we will denote by $[\Delta]$ its image in the Jacobian $J_D$.

Fix a non-zero $F$ in $K[X]$ and consider the ``almost-Pell equation'' 
\begin{equation}\label{almpell}
A^2-D B^2=F,
\end{equation}
where we look for solutions $A,B \in \Kbar[X]$ with $B\neq 0$.

Suppose $F$ factors $F(X)=\beta(X-\alpha_1)^{a_1} \ldots (X-\alpha_m)^{a_m}\in K[X]\setminus \{0\}$, with $\beta \in K\setminus \{0\}$, $\alpha_1, \dots , \alpha_m \in K$ pairwise distinct and $a_1, \dots , a_m$ non-negative integers.

We order the roots of $F$ so that $D$ does not vanish at $\alpha_i$ for $i=1, \ldots, h$ and $D$ vanishes at $\alpha_i$ for $i=h+1, \ldots, m$. Note that $h$ is allowed to be 0 or $m$.
 
If $\alpha_i$ is not a common root of $D$ and $F$, then there are two points on $H_D$ with first coordinate equal to $\alpha_i$ which we denote by $\alpha_i^+$ and $\alpha_i^-$. In case $D$ and $F$ have a common root $\alpha_{i}$, then there is only one point with first coordinate $\alpha_{i}$ and we call it $\alpha_{i}$, as well. Let us assume that all these points and the two points at infinity are defined over $K$.

We now define $P_i=[\alpha_i^+-\infty^-]$ for all $i=1,\dots , h$ and $P_i=[\alpha_i-\infty^-]$ for all $i=h+1,\dots , m$ and $Q=[\infty^+-\infty^-]$ as points on $J_D(K)$. Note that, for $i\leq h$, we have $[\alpha_i^+-\infty^-]=-[\alpha_i^--\infty^+]$
since div$(X-\alpha_i)=\alpha_i^++\alpha_i^- - \infty^+ - \infty^-$ while for $i>h$, as div$(X-\alpha_{i})=2 \alpha_{i} -\infty^+-\infty^-$, we have $[\alpha_{i}-\infty^-]=-[\alpha_{i}-\infty^+]$. 

\begin{remark}\label{rem}
Suppose there is an $\alpha \in K$ that is a root of $D$ (a single root since $D$ is squarefree) and a multiple root of $F$. Then, if we have an equation like \eqref{almpell}, we must have that $(X- \alpha)^2$ divides both $A^2$ and $ B^2$. Therefore, in this case, $A^2-DB^2=F$ has a non-trivial solution if and only if $A^2 - D B^2=F/(X-\alpha)^2$ has a non-trivial solution. Thus, we can always suppose without loss of generality that, if $D$ and $F$ have a common root $\alpha$, then the order of vanishing of $F$ in $\alpha$ is equal to $1$. 
\end{remark}

In what follows we will use the fact that a function in $K(H_D)$ has the form $R+YS$ for some $R,S \in K(X)$ and that $\text{ord}_P(R+YS)=\text{ord}_{\iota(P)}(R-YS)$, where $\iota$ is the involution $Y\mapsto -Y$. Therefore, we have that $R=0$ or $S=0$ if and only if the divisor of $R+YS$ is invariant under $\iota$. 

The next two lemmas translate the existence of a non-trivial solution of the equation \eqref{almpell} to a relation between points of $J_D$ and vice versa.
For the case $F$ of degree at most 1 see Lemma 10.1 and 11.1 of \cite{MasserZannier15}.

\begin{lemma}\label{lemP1}
Suppose there are $A,B\in K[X]$ such that $A^2-DB^2=F$ with $B \neq 0$. Then, there exist $g_1, \dots, g_m,l\in \Z$, not all zero, with $|g_i|\leq a_i$ and $g_i \equiv a_i \mod 2$, such that
$$
\sum_{i=1}^m g_i P_i + l Q=O
$$
on $J_D$.
\end{lemma}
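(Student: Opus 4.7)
The key observation is that if $A^2 - DB^2 = F$, then the function $\varphi = A + YB$ on $H_D$ satisfies $\varphi \cdot \iota^\ast\varphi = F$, where $\iota$ denotes the hyperelliptic involution $Y \mapsto -Y$. My plan is to compute $\text{div}(\varphi)$ explicitly, observe that the identity $\varphi\cdot \iota^\ast\varphi = F$ severely restricts its shape, and then push the principal relation $\text{div}(\varphi) \sim 0$ down to $J_D$ in terms of the $P_i$ and $Q$.

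Since $A, B \in K[X]$, the function $\varphi$ has no finite poles, so I may write
\begin{equation*}
\text{div}(\varphi) = \sum_{i=1}^{h} \bigl(b_i\, \alpha_i^+ + c_i\, \alpha_i^-\bigr) + \sum_{i=h+1}^{m} f_i\, \alpha_i - e^+\,\infty^+ - e^-\,\infty^-
\end{equation*}
with $b_i, c_i, f_i \in \Z_{\geq 0}$ and $e^+, e^- \in \Z$. Since $\iota$ swaps $\alpha_i^+ \leftrightarrow \alpha_i^-$ and $\infty^+ \leftrightarrow \infty^-$ and fixes the $\alpha_i$ for $i > h$, matching coefficients in $\text{div}(\varphi) + \iota^\ast\text{div}(\varphi) = \text{div}(F)$ yields $b_i + c_i = a_i$ for $i \leq h$, $f_i = a_i$ for $i > h$, and $e^+ + e^- = \deg F$.

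Next, using the identity $[\alpha_i^- - \infty^-] = Q - P_i$ for $i \leq h$ (immediate from $\text{div}(X-\alpha_i) \sim 0$, as recorded just before the lemma statement) and rewriting $\text{div}(\varphi)$ as a sum of degree-zero divisors based at $\infty^-$, the relation $[\text{div}(\varphi)] = O$ in $J_D$ becomes
\begin{equation*}
\sum_{i=1}^{h}(b_i - c_i)\, P_i + \sum_{i=h+1}^{m} a_i\, P_i + \Bigl(\sum_{i=1}^{h} c_i - e^+\Bigr) Q = O.
\end{equation*}
Setting $g_i = b_i - c_i$ for $i \leq h$, $g_i = a_i$ for $i > h$, and $l = \sum_{i\leq h} c_i - e^+$ gives $|g_i| \leq b_i + c_i = a_i$ and $g_i + a_i = 2 b_i$, which handles the size and parity conditions.

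The delicate remaining point is non-triviality of $(g_1,\dots,g_m,l)$. If $h < m$, some $g_i = a_i > 0$ for $i > h$, and we are done. If $h = m$, suppose instead that all $g_i$ and $l$ vanish; then $b_i = c_i$ and $e^+ = e^- = \sum c_i$, so $\text{div}(\varphi)$ is $\iota$-invariant. By the observation stated immediately before the lemma, this forces $A = 0$ or $B = 0$: the latter is excluded by hypothesis, while $A = 0$ yields $-DB^2 = F$, contradicting $\gcd(D,F) = 1$ (the $h = m$ assumption) together with $\deg D \geq 4$. Guarding this last step is the main obstacle; the rest amounts to divisor bookkeeping on the hyperelliptic curve.
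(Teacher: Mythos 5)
Your argument is correct and is essentially the paper's own proof: both compute the divisor of $A+YB$, use $(A+YB)(A-YB)=F$ to pin down its shape at the $\alpha_i^{\pm}$, the $\alpha_i$ and infinity, and then push the principal divisor relation to $J_D$ in terms of the $P_i$ and $Q$. The only cosmetic difference is in the non-triviality step, where the paper divides by $\prod_{i\le h}(X-\alpha_i)^{b_i^-}$ and notes the resulting function is non-constant since $B\neq 0$, while you invoke the $\iota$-invariance criterion to force $A=0$ or $B=0$; both routes rest on the same fact recorded just before the lemma.
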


\begin{proof}
We consider the non-constant functions $f^\pm=A\pm YB$ on $H_D$. Since $Y^2=D$, we have that $f^+f^-=F$. Therefore, there are non-negative integers $b^+_1,b_1^-, \dots , b_h^+,b_h^-$ with ord$_{\alpha_i^{\pm}}(f^+)=b^\pm_i$, for $i= 1,\dots, h$. Note that, since ord$_{\alpha_i^{-}}(f^+)=$ord$_{\alpha_i^{+}}(f^-)$, we have $b_i^{+}+b_i^-=a_i$.
For $i>h$, because ord$_{\alpha_i}(f^+)$=ord$_{\alpha_i}(f^-)$, we must have ord$_{\alpha_i}(f^+)=a_i$.
Therefore, since $f^+$ cannot have other zeroes or poles at finite points, there exists an integer $\tilde{l}$ such that
$$
\text{div}(f^+)=\sum_{i=1}^{h} (b_i^+\alpha_i^{+} +b_i^-\alpha_i^{-})+ \sum_{i=h+1}^m a_i \alpha_i +\tilde{l} \infty^{+}-\left( \tilde{l}+\sum_{i=1}^{m} a_i\right)\infty^-.
$$
Let $\tilde{f}$ be the function $f^+/  \prod_{i=1}^h (X-\alpha_i)^{b_i^-} $, which is non-constant since $B\neq 0$. Then,
$$
\text{div}(\tilde{f})=\sum_{i=1}^{h} (b_i^+ -b_i^-)\alpha_i^+ + \sum_{i=h+1}^m a_i \alpha_i +\left(\tilde{l}+\sum_{i=1}^{h}b_i^- \right) \infty^+ - \left(\tilde{l}+\sum_{i=1}^{h}b_i^+ +\sum_{i=h+1}^m a_i\right)\infty^-.
$$
Then, if $l=\tilde{l}+\sum_{i=1}^{h}b_i^-$, we have
$$
\sum_{i=1}^{h} (b_i^+ -b_i^-)P_i+ \sum_{i=h+1}^{m} a_i P_i  +lQ=O
$$
on $J_D$. If the relation was trivial, we would have $\tilde f$ constant, which is not possible. This gives the claim.
\end{proof}



\begin{lemma}\label{lemP2}
Suppose there is a relation
\begin{equation} \label{lin.eq}
\sum_{i=1}^m  e_i P_i + l Q=O
\end{equation}
for some integers $e_1, \dots ,e_m,l$ not all zero. Moreover, suppose that, for all $i=h+1, \ldots, m$, the integer $e_{i}$ is odd or zero. Then, there exist $A,B\in K[X]$, with $B\neq 0$ such that
$$
A^2-DB^2=\beta \prod_{i=1}^m (X-\alpha_i)^{|e_i|},
$$
for some non-zero $\beta\in K $. 
\end{lemma}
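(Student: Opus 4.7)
The plan is to run the construction of Lemma \ref{lemP1} in reverse: from the relation on $J_D$, produce a principal divisor on $H_D$, extract a function $\tilde f$, rescale it by powers of $(X-\alpha_i)$ to make it regular at all finite points, and read off $A$ and $B$ from the resulting $f = A + YB$.

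First, since $\sum_{i=1}^m e_i P_i + lQ = O$ in $J_D(K)$, the divisor
\[
\Delta = \sum_{i=1}^h e_i \alpha_i^+ + \sum_{i=h+1}^m e_i \alpha_i + l\,\infty^+ - \Bigl(l + \sum_{i=1}^m e_i\Bigr)\infty^-
\]
is principal and defined over $K$, so there exists $\tilde f \in K(H_D)^\times$ with $\mathrm{div}(\tilde f) = \Delta$. Because the relation is non-trivial and the listed points are pairwise distinct, $\Delta \neq 0$ and hence $\tilde f$ is non-constant.

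Second, define $f := \tilde f \cdot \prod_{i\,:\,e_i<0}(X - \alpha_i)^{|e_i|}$. Using the explicit formulas $\mathrm{div}(X-\alpha_i) = \alpha_i^+ + \alpha_i^- - \infty^+ - \infty^-$ for $i \le h$ and $\mathrm{div}(X-\alpha_i) = 2\alpha_i - \infty^+ - \infty^-$ for $i > h$, one checks case by case that the order of $f$ at every finite point is non-negative (for $i\le h$ the pair of orders at $(\alpha_i^+,\alpha_i^-)$ becomes $(\max(e_i,0),\max(-e_i,0))$, and for $i > h$ the order at $\alpha_i$ becomes $|e_i|$). Hence $f$ is regular away from $\infty^\pm$, which forces $f = A + YB$ with $A,B \in K[X]$.

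Third, letting $\iota$ be the hyperelliptic involution $Y \mapsto -Y$, we have $f\,\iota(f) = A^2 - DB^2 \in K(X)$. Since $\iota$ fixes each $X-\alpha_i$, one computes
\[
\mathrm{div}(f\,\iota(f)) = \mathrm{div}(f) + \iota_*\mathrm{div}(f) = \mathrm{div}\Bigl(\prod_{i=1}^m (X-\alpha_i)^{|e_i|}\Bigr),
\]
where the match at $\infty^\pm$ boils down to the identity $\sum |e_i| = \sum e_i + 2\sum_{e_i<0}|e_i|$. Consequently $f\,\iota(f)/\prod (X-\alpha_i)^{|e_i|}$ has trivial divisor, so it equals some $\beta \in K^\times$, giving $A^2 - DB^2 = \beta\prod(X-\alpha_i)^{|e_i|}$.

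The main obstacle, and the place where the parity hypothesis is used, is verifying $B \neq 0$. Suppose for contradiction that $B = 0$; then $f = A \in K[X]$, so $\mathrm{div}(f)$ is $\iota$-invariant. Comparing coefficients at $\alpha_i^+$ and $\alpha_i^-$ forces $\max(e_i,0) = \max(-e_i,0)$, i.e. $e_i = 0$ for all $i \le h$. Then $A^2 = \beta\prod_{i>h}(X-\alpha_i)^{|e_i|}$, and since each factor is a distinct linear polynomial in $K[X]$ and the exponents $|e_i|$ are odd or zero, being a square forces $e_i = 0$ for all $i > h$ as well. The relation reduces to $lQ = O$ with $l\neq 0$, so $f = \tilde f$ has divisor $l\,\infty^+ - l\,\infty^-$; but this is impossible for a polynomial in $K[X]$, whose divisor has a single infinity-component $-d(\infty^+ + \infty^-)$. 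This contradiction completes the proof.
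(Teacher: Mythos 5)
Your proof is correct and follows essentially the same route as the paper's: you realize the relation as a principal divisor, multiply by $\prod_{e_i<0}(X-\alpha_i)^{|e_i|}$ to clear finite poles, write the resulting function as $A+YB$, and use the involution together with the parity hypothesis on $e_{h+1},\dots,e_m$ to get $A^2-DB^2=\beta\prod(X-\alpha_i)^{|e_i|}$ with $B\neq 0$. The only cosmetic difference is that you deduce $A,B\in K[X]$ directly from regularity at the finite points, whereas the paper argues via $f^++f^-=2A$ and the squarefreeness of $D$; both are fine.
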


\begin{proof}
From \eqref{lin.eq}, we have that
$$
\sum_{i=1}^h  e_i [\alpha_i^+-\infty^-] + \sum_{i=h+1}^m  e_i [\alpha_i-\infty^-]+  l [\infty^+-\infty^-]=O.
$$
This implies that there is a non-constant function $f \in K(H_D)$ with
$$
\text{div}(f)= \sum_{i=1}^h e_i \alpha_i^+ +\sum_{i=h+1}^m e_i \alpha_i +l \infty^+ - \left(l + \sum_{i=1}^m e_i\right) \infty^-.
$$
Let us now define $f^+= f \prod_{i,~e_i<0} (X-\alpha_i)^{|e_i|}$; then,
$$
\text{div}(f^+)=\sum_{\substack{i=1\\ e_i \ge 0}}^h  |e_i| \alpha_i^{+} +\sum_{\substack{i=1\\ e_i<0}}^h  |e_i| \alpha_i^{-}+ \sum_{i=h+1}^m  |e_i| \alpha_i + l' \infty^{+}- \left(l' + \sum_{i=1}^m |e_i|\right) \infty^-,
$$
where $l'=l-\sum_{i=1, ~e_i<0}^m |e_i|$. As $f^+$ is a rational function on $H_D$, there exist $A,B \in K(X)$ such that $f^+=A+YB$. Moreover, using the properties of the involution $\iota: Y \mapsto -Y$, we have that, if $f^-:=A-YB$,
$$
\text{div}(f^-)=\sum_{\substack{i=1 \\e_i\ge 0}}^h  |e_i| \alpha_i^{-} +\sum_{\substack{i=1\\e_i<0}}^h  |e_i| \alpha_i^{+}+ \sum_{i=h+1}^m  |e_i| \alpha_i + l' \infty^{-}- \left(l' + \sum_{i=1}^m |e_i|\right) \infty^+.
$$
Therefore, we have
$$
f^+f^-=\beta \prod_{i=1}^m (X-\alpha_i)^{|e_i|},
$$
for some non-zero $\beta \in K$. Finally, we have that $f^++f^-=2A$ has no pole at finite points, so $A$ is a polynomial and so must be $B$ because $DB^2=A^2-\beta \prod (X-\alpha_i)^{|e_i|}$ and $D$ is squarefree. 

Finally, let us prove that $B$ cannot be 0. Indeed, that would mean that all $e_i$ are even and that div$(f^+)$=div$(f^-)$. These two facts, together with our assumptions on $e_{h+1}, \dots , e_m$, imply that all $e_i=0$. Therefore, $l'=0$ and so also $l$ should be zero, which contradicts the hypotheses.
\end{proof}

Consider now the setting of Theorem \ref{Pellthm}. Recall we have a smooth, irreducible curve $S$ defined over a number field $k$ and  polynomials $D$ and $F$ with coefficients in $K=k(S)$. Recall moreover that we suppose that the Jacobian $J_D$ has no one-dimensional abelian subvariety.
We can consider a finite extension $K'$ of $K$ so that the points $P_i$ and $Q$ of $J_D$ associated to $D$ and $F$ are all defined over $K'$. Since $K'$ has the form $k(S')$ for some irreducible non-singular curve $S'$ with a finite cover $S'\rightarrow S$, in view of the claim of Theorem \ref{Pellthm}, we can suppose $S'=S$.

Similarly to what done is Section \ref{aux}, we define
\begin{multline*}
M=\lg (a_1, \dots, a_m)\in \Z_{\geq 0}^m: \exists~ A,B \in \Kbar[X], B\neq 0,\right.\\ \left. A^2-DB^2= (X-\alpha_1)^{a_1}\cdots (X-\alpha_m)^{a_m} \rg,
\end{multline*}
and, for all $s_0\in S(\C)$,
\begin{multline*}
\Delta(s_0)=\lg (a_1, \dots, a_m)\in \Z_{\geq 0}^m: \exists~ A,B \in \C[X], B\neq 0,\right.\\ \left. A^2-D_{s_0}B^2= (X-\alpha_1(s_0))^{a_1}\cdots (X-\alpha_m(s_0))^{a_m} \rg,
\end{multline*}
where the $\alpha_i(s_0)$ are the $\alpha_i$ specialized at $s_0$.

The claim of Theorem \ref{Pellthm} can be easily deduced from the following.

\begin{theorem}
We have $\Delta(s_0)=M$ for all but finitely many $s_0\in S(\C)$.
\end{theorem}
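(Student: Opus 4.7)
The plan is to deduce this directly from Corollary \ref{ausxthm}. Spreading the generic Jacobian $J_D$ to an abelian scheme $\mathcal J_D\to S$ over a non-empty open subset of $S$, the sections $P_1,\dots,P_m,Q$ define an irreducible curve $\cC\subset\mathcal J_D^{m+1}$, isomorphic to $S$ via the structural projection and in particular not contained in any fiber. Since $J_D$ contains no one-dimensional abelian subvariety by hypothesis, Corollary \ref{ausxthm} applied to $\mathcal J_D$ and $\cC$ produces a finite exceptional set $\Sigma\subset S(\C)$ outside of which the specialized lattice of integer relations
\[
\Lambda(s_0)=\{(g_1,\dots,g_m,l)\in\Z^{m+1}:g_1P_1(s_0)+\dots+g_mP_m(s_0)+lQ(s_0)=O\}
\]
coincides with the generic relation lattice
\[
L=\{(g_1,\dots,g_m,l)\in\Z^{m+1}:g_1P_1+\dots+g_mP_m+lQ=O\text{ on }J_D\}.
\]

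I would then translate the equality $\Lambda(s_0)=L$ into $\Delta(s_0)=M$ using the dictionaries of Lemmas \ref{lemP1} and \ref{lemP2}. The inclusion $M\subseteq\Delta(s_0)$ is immediate by specializing generic solutions, after absorbing the finitely many bad specialization points into $\Sigma$. For the reverse, given $(a_1,\dots,a_m)\in\Delta(s_0)$ with $s_0\notin\Sigma$, Lemma \ref{lemP1} applied over $\C$ yields a non-trivial $(g_1,\dots,g_m,l)\in\Lambda(s_0)=L$ with $|g_i|\leq a_i$, $g_i\equiv a_i\pmod{2}$ and $g_i=a_i$ for $i>h$. To invoke Lemma \ref{lemP2} one first normalizes so that $g_i'$ is odd or zero for each $i>h$, using the generic identity $2P_i=Q$ (which follows from $\mathrm{div}(X-\alpha_i)=2\alpha_i-\infty^+-\infty^-$) to replace any even coefficient at such a position by zero and absorb $g_i/2$ into $l$; this preserves both $|g_i'|\leq a_i$ and the parity condition. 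Lemma \ref{lemP2} then produces $\tilde A,\tilde B\in\Kbar[X]$ with $\tilde B\ne 0$ and $\tilde A^2-D\tilde B^2=\beta\prod_i(X-\alpha_i)^{|g_i'|}$; rescaling by $\beta^{-1/2}$ and multiplying both $\tilde A$ and $\tilde B$ by $\prod_i(X-\alpha_i)^{(a_i-|g_i'|)/2}$ (non-negative integer exponents by the parity condition) yields a solution witnessing $(a_1,\dots,a_m)\in M$.

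The step requiring genuine care, and the main expected obstacle, is verifying that the normalized relation is non-zero, so that Lemma \ref{lemP2} genuinely applies. The delicate scenario is when $g_i=0$ for all $i\leq h$ and every $a_i$ with $i>h$ is even, in which case the normalization collapses every entry but the last; the only way to produce the required witness then is to exhibit a genuinely non-trivial relation of the form $(0,\dots,0,N)\in L$. A direct divisor calculation with the auxiliary function $\tilde f\cdot\prod_{i>h}(X-\alpha_i)^{-a_i/2}$, built from the $\tilde f$ of the proof of Lemma \ref{lemP1}, shows that this function has divisor supported only at $\infty^+,\infty^-$ and cannot be constant (a constant value would force $\tilde B=0$, contradicting the hypothesis on the specialized solution); its order $N$ at $\infty^+$ is therefore a non-zero integer, and $(0,\dots,0,N)\in\Lambda(s_0)=L$ provides the required non-trivial relation, which Lemma \ref{lemP2} then converts to a Pell solution of $A_0^2-DB_0^2=\beta$ with $B_0\ne 0$ over $\Kbar$; multiplying $A_0,B_0$ by $\prod_i(X-\alpha_i)^{a_i/2}$ after rescaling completes the argument in this case. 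This book-keeping around the universal identities $2P_i=Q$ is the only non-routine ingredient once the lattice equality of Corollary \ref{ausxthm} is in hand.
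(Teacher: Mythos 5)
Your proposal is correct and follows essentially the same route as the paper: Corollary \ref{ausxthm} to promote the specialized relations to identical ones, combined with the dictionaries of Lemmas \ref{lemP1} and \ref{lemP2}. The only divergence is in securing the odd-or-zero hypothesis of Lemma \ref{lemP2} and the non-triviality of the relation: where you normalize via the identity $2P_i=Q$ for $i>h$ and rule out collapse by a divisor computation, the paper gets this for free by first invoking Remark \ref{rem} to reduce the exponents $a_i$, $i>h$, to $0$ or $1$, which forces $g_i\in\{0,\pm 1\}$ at those places.
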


\begin{proof}
Suppose we have an infinite set $S_0$ of points $s_0 \in S(\C)$ such that there exist vectors $(a_1(s_0), \dots , a_m(s_0))\in \Delta(s_0)\setminus M$. 
Recall that we ordered $\{\alpha_1, \ldots, \alpha_m\}$ so that $D$ does not vanish at $\alpha_i$ for $i=1, \ldots, h$ and $D$ vanishes at $\alpha_i$ for $i=h+1, \ldots, m$. 
By Remark \ref{rem} we can choose the vectors $(a_1(s_0), \dots , a_m(s_0))\in \Delta(s_0)\setminus M$ so that, for all $i=h+1,\dots , m$, we have $a_{i}(s_0)=0$ or $1$.  Then, by Lemma \ref{lemP1}, we have that, for all $s_0 \in S_0$, 
$$
\sum_{i=1}^m g_i(s_0) P_i(s_0)+l(s_0)Q(s_0)=O,
$$
for some $g_1(s_0), \dots, g_m(s_0),l(s_0) \in \Z$, not all zero, with $|g_i(s_0)|\leq a_i(s_0)$, $g_i(s_0) \equiv a_i(s_0) \mod 2$ and $g_i(s_0)=\pm a_i(s_0)$ for all $i=h+1, \ldots, m$. By Corollary \ref{ausxthm}, after throwing away at most finitely many elements of $S_0$, we have that all of the above relations are actually identical. In other words
$$
\sum_{i=1}^m g_i(s_0) P_i+l(s_0)Q=O,
$$
for all $s_0 \in S_0$, identically on $S$. Applying Lemma \ref{lemP2}, we have that $(|g_1(s_0)|,\dots ,|g_m(s_0)|)$ are in $M$, and then clearly $(a_1(s_0), \dots , a_m(s_0))\in M$, which contradicts the existence of the above infinite set, as wanted.
\end{proof}

\section{Some examples} \label{Pell_example}

In this section we apply Theorem \ref{Pellthm} to some examples. Let $K=\overline \Q(t)$ and consider the generalized Pell equation
\begin{equation} \label{Pell.ex}
A^2-D_t B^2=F,
\end{equation}
where $D_t \in K[X]$ is the  family of polynomials defined by $D_t(X)=(X-t)(X^7-X^3-1)$ and $F(X)=4X+1 \in \overline \Q[X]$. 

The curve defined by $Y^2=D_t(X)$ has a non-singular model $H_{D_t}$ which is a hyperelliptic curve of genus $3$. As before, we denote by $J_{D_t}$ its Jacobian variety, which is an abelian variety of dimension $3$. It is easy to see that the polynomial $X^7-X^3-1$ has no multiple roots and that the Galois group of its splitting field is the permutation group $S_7$. Using Theorem 1.3 of \cite{Zarhin10},  
this implies that $J_{D_t}$ is geometrically simple and, in particular, it does not contain any one-dimensional abelian subvariety (for similar examples of families of this type see also \cite{EEHK}). 

We want now to prove that \eqref{Pell.ex} has no non-trivial solution with $A, B\in \overline K[X]$. Suppose by contradiction that the equation has a non-trivial solution. By Proposition 3.6 of \cite{VPT}, if $A,B$ are polynomials in $X$ satisfying $A^2-D_tB^2=F$ with $\deg_X(F)\le \frac{1}{2} \deg_X(D_t)-1$, then $A/B$ has to be a convergent of the continued fraction expansion of $\sqrt{D_t}$; in particular, this means that $A,B$ are polynomials in $K[X]$, i.e., the coefficients are rational functions in $t$. Clearing denominators, we have a new equation $A'^2-D_tB'^2=E^2F$ with $A',B' \in \overline \Q[t,X]$ and $E \in \Qbar [t]$. But now we have two cases: if $E\in \overline \Q$, then it is easy to see that the equation cannot have an identical solution because $D_t$ has degree $1$ in $t$ and $F$ is independent of $t$. On the other hand, if $E \in \overline \Q(t)\setminus \overline \Q$, then we can specialize to a zero $t_0$ of $E$, giving that $D_{t_0}(X)$ would be a square in $\overline \Q[X]$, which is again a contradiction. We can then apply Theorem \ref{Pellthm} to conclude that there are at most finitely many $t_0\in \C$ for which the specialized equation $A^2-D_{t_0}B^2=F$ is solvable. For example, for $t_0=0$, we have 
$$ (2X^4+1)^2-X(X^7-X^3-1)2^2=4X+1. $$
Note that the same argument using \cite{VPT} applies if we take as $D_t$ a squarefree polynomial in $\overline \Q[t,X]$ satisfying the hypotheses of Theorem \ref{Pellthm} with odd degree in $t$ and $F\in \overline{\Q}[X]$ with
$\deg_X(F)\le \frac{1}{2} \deg_X(D_t)-1$; in this case, we always have that the almost-Pell equation $A^2-D_tB^2=F$ is not identically solvable.

We also remark that, if $\deg_X(F)> \frac{1}{2} \deg_X(D_t)-1$, we cannot in general conclude that the polynomials $A,B$ have coefficients in $K$ rather than $\overline K$.  If we take for example $D_t(X)=X^6+X+t$ and $F(X)=-X^6-X$, then the almost-Pell equation has 
non-trivial solutions in $A,B \in \overline{\Q(t)}[X]$, i.e. $$ \left (\sqrt t \right )^2-(X^6+X+t)1^2=-X^6-X, $$
but it is easy to see that it cannot have a solution in $\Qbar(t)[X]$ just looking at the degrees in $t$. \\

Let us finally show with some examples that the requirement that $J_{D_t}$ contains no one-dimensional abelian subvariety is necessary to conclude the finiteness result. 

Examples of polynomials in $ \Q(t)[X]$ of degree at least six that are not identically Pellian but become Pellian for infinitely many specializations because the associated Jacobian has an elliptic factor appear on p.~2396 of \cite{MasserZannier15} and p.~3 of \cite{MZpreprint}.

Let us now consider the family of polynomials $D_t(X)=X^{12}+X^4+t \in \Q(t)[X]$ and let us take $F(X)=X^4-1$. We can define the map
$$ \beta: H_{D_t} \rightarrow H_{\widetilde{D}_t} \quad \beta(X,Y)=(X_1, Y_1)=(X^4, X^2Y), $$
where $H_{\widetilde{D}_t}$ is the genus 1 curve defined by the equation $Y_1^2=\widetilde{D}_t(X_1)=X_1^4+X_1^2+tX_1$. Let us define also $\widetilde{F}(X_1)=X_1-1$ and consider the almost-Pell equation 
\begin{equation} \label{new.almost}
 A^2-\widetilde{D}_t B^2=\widetilde{F}.
\end{equation}
Using \cite{VPT} and the same argument of the previous example, \eqref{new.almost} is not identically solvable (neither is the original equation $A^2-D_tB^2=F$). However we show that there are infinitely many $t_0\in \C$ such that \eqref{new.almost} specialized in $t_0$ has a non-trivial solution. In fact, using the notation introduced in the previous section, we consider the Jacobian $J_{\widetilde{D}_t}$ of $H_{\widetilde{D}_t}$ which can be identified with $H_{\widetilde{D}_t}$ itself by choosing a point on it. Consider moreover $P_t=[(1, \sqrt{2+t})-\infty^-]$ and $Q_t=[\infty^+ -\infty^-]$, where $\infty^+$ and $\infty^-$ are the two points at infinity of $H_{\widetilde{D}_t}$. First, notice that $Q_t$ is not identically torsion of $J_{\widetilde{D}_t}$, otherwise the polynomial $\widetilde{D}_t$ would be identically Pellian (i.e. the Pell equation $A^2-\widetilde{D}_tB^2=1$ would be identically solvable), which is not the case again using \cite{VPT}. Using Lemma \ref{lemP1} and \ref{lemP2}, we have that for some $t_0\in \C$, the equation \eqref{new.almost} has a non-trivial solution if and only if there exists an integer $l(t_0)$ such that $P_{t_0}=l(t_0)Q_{t_0}$. 

However, it is a consequence of Siegel's theorem for integral points on curves over function fields that this happens for infinitely many $t_0\in \C$. 

In fact, if $L=\Q(\sqrt{2+t})$, then both $P_t$ and $Q_t$ are defined over $L$. Suppose that we have a finite number of $t_0 \in \C$ such that $P_{t_0}=l(t_0)Q_{t_0}$ for some $l(t_0)\in \Z$ and denote by $S_0$ the set of such $t_0$. Let $R_t^{(l)}=P_t-lQ_t$. Then, as $Q_t$ is not identically torsion, the set of $R_t^{(l)}$, for varying $l$, is an infinite set of $L$-rational points of $H_{\widetilde{D}_t}$. Now, the fact that for all $l$ the set of $t_0$ such that $R_{t_0}^{(l)}=O_{t_0}$ is contained in $S_0$ implies that all $R_t^{(l)}$ are $S_0$-integral. Since we supposed that $S$ is finite, this contradicts Siegel's theorem for integral points on curves over function fields (see \cite{Silv09}, Theorem 12.1). 



 Thus we proved that there exist infinitely many $t_0\in \C$ such that \eqref{new.almost} has a non-trivial solution. For such a $t_0$ suppose we have
$$ A_1^2-(X_1^4+X_1^2+t_0X_1)B_1^2=X_1-1, $$
for some $A_1, B_1 \in \C[X_1]$.
Recalling that $X_1=X^4$, we have
$$ X^4-1=(A_1(X^4))^2-(X^{16}+X^8+t_0X^4)(B_1(X^4))^2=(A_1(X^4))^2-D_{t_0}(X)(X^2B_1(X^4))^2, $$
so $A_1(X^4), X^2B_1(X^4)$ is a solution of the original equation $A^2-D_{t_0}B^2=F$. Hence, we showed that the equation $A^2-D_t B^2=F$ is not identically solvable but there are infinitely many $t_0\in \C$ such that the specialized equation is solvable.

\section*{Fundings}

This work was supported by the European Research Council [267273], the Engineering and Physical Sciences Research Council [EP/N007956/1 to F.B. and EP/N008359/1 to L.C.], the Istituto Nazionale di Alta Matematica [Borsa Ing. G. Schirillo to L.C.] and the Swiss National Science Foundation [165525 to F.B.].

\section*{Acknowledgments}

We thank Philipp Habegger, Gareth Jones and Jonathan Pila for their support, Umberto Zannier for sharing his ideas and David Masser for suggesting to us some arguments used in Section \ref{fixedrel}. We are grateful also for sending us their preprint \cite{MZpreprint}. We moreover thank Daniel Bertrand, Gabriel Dill, Harry Schmidt, Amos Turchet and Francesco Veneziano for useful discussions.

\appendix

\section{An alternative proof of Theorem \ref{thmHGT}}

In this appendix we see how the main results of \cite{BC2016} and \cite{BC2017} (and so also our Theorem \ref{thmscheme}) give an alternative proof of a result of Hsia, Ghioca and Tucker, namely Theorem \ref{thmHGT}.

For the sake of the reader we state it here again.

\begin{theorem}[\cite{Ghioca17}, Theorem 1-1]
	Let $\pi_i:\mc{E}_i\rightarrow S$ be two elliptic surfaces over a curve $S$ defined over $\Qbar$ with generic fibers $E_i$, and let $\sigma_{P_i},\sigma_{Q_i}$ be sections of $\pi_i$ (for $i=1,2$) corresponding to points $P_i,Q_i \in E_i(\Qbar (S))$. If there exist infinitely many $s \in S(\Qbar)$ for which there exist some $m_{1,s}, m_{2,s}\in \Z$ such that $m_{i,s} \sigma_{P_i}(s)=\sigma_{Q_i}(s)$ for $i=1,2$, then at least one of the following properties hold:
	\begin{enumerate}
		\item there exist isogenies $\varphi:E_1\rightarrow E_2$ and $\psi:E_2\rightarrow E_2$ such that $\varphi(P_1)=\psi (P_2)$. \label{it1}
		\item for some $i \in \{ 1,2\}$, there exist $k_i\in \Z$ such that $k_iP_i=Q_i$ on $E_i$.\label{it2}
	\end{enumerate}
\end{theorem} 

\begin{proof}

We consider an infinite sequence $(s_n)_{n\in \N}$ of points of $ S(\Qbar)$ for which there exist some $m_{1,s_n}, m_{2,s_n}\in \Z$ such that \begin{equation}\label{eqa1}
m_{1,s_n} \sigma_{P_1}(s_n)=\sigma_{Q_1}(s_n) ,
\end{equation} and \begin{equation}\label{eqa11}
m_{2,s_n} \sigma_{P_2}(s_n)=\sigma_{Q_2}(s_n).
\end{equation} So, for each $s_n$ we have two integers $m_{1,s_n}$ and $ m_{2,s_n}$ that we assume to be the smallest in absolute value to satisfy \eqref{eqa1} and \eqref{eqa11}. We can moreover assume the absolute values of both $m_{1,s_n}$ and $m_{2,s_n}$ are unbounded as $n\rightarrow \infty$, otherwise \eqref{it2} is true. 

First, suppose $E_1 $ and $E_2$ are not isogenous. Then, by Theorem 1.3 of \cite{BC2017}, we have that $P_1$ and $Q_1$ are dependent on $E_1$, say 
\begin{equation}\label{eqa2}
a_1 P_1=b_1 Q_1,
\end{equation}
for integers $a_1, b_1$ not both zero. We have $b_1\neq 0$ otherwise $|m_{1,s_n}|$ is uniformly bounded and we have \eqref{it2}. Combining \eqref{eqa1} and \eqref{eqa2} we have that $\sigma_{P_1}(s_n)$ is torsion for almost all $n$. Using \cite{BC2017} again we have that $P_1$ is identically torsion, which would imply again \eqref{it2}, or 
\begin{equation}\label{eqa3}
a_2 P_2=b_2 Q_2,
\end{equation}
for integers $a_2, b_2$ not both zero. Arguing as above, we have that $\sigma_{P_2}(s_n)$ is torsion for almost all $n$. If we apply \cite{BC2017} one last time (actually \cite{MZ14a} suffices this time), we have that at least one between $P_1$ and $P_2$ is torsion. This again implies \eqref{it2} and finally gives the claim in the non-isogenous case.

If $E_1$ and $E_2$ are isogenous the proof is a bit more involved. Clearly we can suppose that $E_1=E_2$ and we have to prove that \eqref{it2} holds or that $P_1$ and $P_2$ are dependent. Using Theorem 2.1 of \cite{BC2016} we get 
\begin{equation}\label{eqa4}
a_1P_1+a_2P_2= b_1Q_1+b_2Q_2,
\end{equation}
for integers $a_1, b_1,a_2, b_2$ not all zero. If $b_1=b_2=0 $ we are done so suppose $b_1\neq 0$. Then, combining \eqref{eqa1} and \eqref{eqa4} we have that $\sigma_{P_1}(s_n),\sigma_{P_2}(s_n)$ and $\sigma_{Q_2}(s_n)$ are dependent for almost all $n$. We apply \cite{BC2016} again and obtain \begin{equation}\label{eqa5}
c_1P_1+c_2P_2= d_2Q_2,
\end{equation}
for integers $c_1, c_2, d_2$ not all zero, and we suppose $d_2\neq 0$ otherwise we are done. Combining \eqref{eqa11} and \eqref{eqa5}, we have
\begin{equation}\label{eqa6}
c_1 \sigma_{P_1}(s_n) +(c_2- d_2 m_{2,s_n} )\sigma_{P_2}(s_n)=O.
\end{equation}
This, together with \eqref{eqa1}, gives
\begin{equation}\label{eqa7}
e_1P_1+e_2P_2= f_1Q_1,
\end{equation}
with $f_1\neq 0$.
We combine this with \eqref{eqa1} again and get 
\begin{equation}\label{eqa8}
(e_1 - f_1 m_{1,s_n}) \sigma_{P_1}(s_n) +e_2 \sigma_{P_2}(s_n)=O.
\end{equation}
Now, if \eqref{eqa6} and \eqref{eqa8} are independent relations, then by \cite{BC2016} (\cite{MZ12} suffices here) $P_1 $ and $P_2$ are dependent and we are done, so we are left to show that 
$$
c_1 e_2 -(c_2- d_2 m_{2,s_n} )(e_1-f_1 m_{1,s_n})\neq 0.
$$
If not, then 
$$
 m_{1,s_n}=\frac{1}{f_1}\left( e_1 - \frac{c_1e_2}{c_2- d_2 m_{2,s_n}}  \right).
 $$
But its absolute value has to tend to infinity as $n\rightarrow \infty$ and this is impossible because $|m_{2,s_n}|$ has to tend to infinity as well.
\end{proof}

\bibliographystyle{amsalpha}
\bibliography{bibliography}

\end{document}